\documentclass[a4paper,reqno,12pt]{amsart}

\usepackage{amssymb,amsthm,amsmath}
\usepackage{xcolor}
\usepackage{hyperref}
\hypersetup{
	colorlinks=true,
	linkcolor=blue,
	%filecolor=red,
	citecolor=red,
}
\usepackage[left=3cm,right=2.7cm,top=3cm,bottom=2.7cm,twoside]{geometry}

\setlength\arraycolsep{2pt}

\newtheorem{theorem}{Theorem}[section]

\newtheorem{corollary}[theorem] {Corollary}
\newtheorem{defin}[theorem]{Definition}
\newenvironment{definition}{\begin{defin}\rm}{\end{defin}}
\newtheorem{ex}[theorem]{Example}
\newenvironment{example}{\begin{ex}\rm}{\end{ex}}
\newtheorem{lemma} [theorem]{Lemma}

\newtheorem{proposition}[theorem]{Proposition}
\newtheorem{rem}[theorem]{Remark}
\newenvironment{remark}{\begin{rem}\rm}{\end{rem}}

\newcommand{\N}{\mathbb{N}}
\newcommand{\D}{\mathcal{D}}

\newtheorem{thm}{Theorem}

\numberwithin{equation}{section}

\linespread{1.2}
\makeatletter
\@namedef{subjclassname@2020}{%
	\textup{2020} Mathematics Subject Classification}
\makeatother

\begin{document}

\title[Convergence of sub-series' and sub-signed series']{Convergence of sub-series' and sub-signed series'\\[5pt] in terms of  the asymptotic $\psi$-density}

\author{J.~Heittokangas and Z.~Latreuch}

\begin{abstract}
Given a non-negative real sequence $\{c_n\}_n$ such that the series $\sum_{n=1}^{\infty}c_n$ diverges, it is known that the size of an infinite subset $A\subset\N$ can be measured in terms of the linear density such that the sub-series $\sum_{n\in A}c_n$ either (a) converges or (b) still diverges. The purpose of this research is to study these convergence/divergence questions by measuring the size of the set $A\subset\N$ in a more precise way in terms of the recently introduced asymptotic $\psi$-density. The convergence of the associated sub-signed series $\sum_{n=1 }^{\infty}m_nc_n$ is also discussed, where $\{m_n\}_n$ is a real sequence with values restricted to the set $\{-1, 0, 1\}$.

\medskip
\noindent
\textbf{Keywords.} Asymptotic $\psi$-density, linear density, sub-series, sub-signed series. 

\medskip
\noindent
\textbf{MSC 2020.} Primary 40A05, secondary 11B05.
\end{abstract}

\maketitle

\renewcommand{\thefootnote}{}
\footnotetext[1]{Corresponding author: J.~Heittokangas.}
\footnotetext[2]{Z.~Latreuch was supported by  the National Higher School of Mathematics. He would like to thank the Department of Physics and Mathematics at the University of Eastern Finland for its hospitality during his visit in May 2024.}

%%%%%%%%%%%%%%%%%%%%%%%%%%%%
%%
%% SECTION 1
%%
%%%%%%%%%%%%%%%%%%%%%%%%%%%%

\section{Introduction \& motivation}

In this research, a typical starting point is a divergent series $\sum_{n=1}^{\infty}c_n$, where $\{c_n\}_n$ is a non-negative real sequence. If $\{m_n\}_n$ is a real sequence with values restricted to the set $\{-1, 0, 1\}$, we say that the associated series $\sum_{n=1 }^{\infty}m_nc_n$ is a
\begin{itemize}
\item \emph{sub-series}, if $m_n\in \{0,1\}$;
\item \emph{signed series}, if $m_n\in\{-1,1\}$;
\item \emph{sub-signed series}, if $m_n\in\{-1,0,1\}$.
\end{itemize}

A particularly useful way to express a given sub-series is in terms of the characteristic function $\chi_A$ of a set $A\subset\N$, in which case the sub-series takes the form
	\begin{equation}\label{subseries}
	\sum_{n=1}^\infty \chi_A(n)c_n.
	\end{equation}
For example, the harmonic series with $c_n=1/n$ diverges, but choosing $A\subset\N$ to be the set of all squared positive integers, the sub-series \eqref{subseries} converges.
Typical examples of signed series' are the alternating series' such as the alternating harmonic series $\sum_{n=1}^\infty \frac{(-1)^{n+1}}{n}$, which are conditionally convergent. The concept of sub-signed series seems to be new, although such series' have been studied in the literature \cite{GM}. A comprehensive collection of results on divergent series' and alternating series' can be found in Hardy's monograph \cite{H}.

The \emph{lower} and \emph{upper linear densities} of a set $A\subset\N$ are given, respectively, by
	$$
	\underline{\operatorname{d}}(A)=\liminf_{n\to\infty}\frac{A(n)}{n}
	\quad\textnormal{and}\quad
	\overline{\operatorname{d}}(A)=\limsup_{n\to\infty}\frac{A(n)}{n},
	$$
where $A(n):=\sum_{k=1}^n\chi_A(k)$. In particular, if $A=\{\varphi(n):n\in \N\}$, where \(\{\varphi(n)\}_n\) is a strictly increasing sequence of non-negative integers, then according to \cite{P}, the lower and upper linear densities of $A$ can be expressed as
	\begin{eqnarray}\label{LUdens}
	\underline{\operatorname{d}}(A)=\liminf_{n\to\infty}\frac{n}{\varphi(n)}
\quad\textnormal{and}\quad
\overline{\operatorname{d}}(A)=\limsup_{n\to\infty}\frac{n}{\varphi(n)}.
	\end{eqnarray}
It is clear that $0\leq \underline{\operatorname{d}}(A)\leq \overline{\operatorname{d}}(A)\leq 1$. If $\underline{\operatorname{d}}(A)=\overline{\operatorname{d}}(A)$, their common value $\operatorname{d}(A)$ is called the \emph{linear density} of $A$. 

The convergence and divergence of the sub-series \eqref{subseries} are closely related to the linear density of the set \(A\). One of the earliest findings in this regard is the following result by Auerbach from 1930 \cite{Au}, which, according to Footnote 1 in \cite{Au}, was actually obtained in 1923.

\begin{thm}\label{Aue}
\textnormal{\cite[Hilfsatz]{Au}}
Let $\{c_n\}_n$ be a sequence of non-negative real numbers. If $\sum_{n=1}^\infty c_n$ diverges, then there exists an infinite set $A \subset \N$ with $\operatorname{d}(A)=0$ such that \eqref{subseries} also diverges.
\end{thm}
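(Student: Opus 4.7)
\emph{Plan.} My plan is first to dispose of the easy case $\limsup_{n\to\infty} c_n > 0$, then to construct $A$ in the main case $c_n \to 0$ by a two-level partition of $\N$ into blocks and sub-blocks.

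In the easy case, I would choose $\varepsilon > 0$ so that $E_{\varepsilon} := \{n \in \N : c_n \geq \varepsilon\}$ is infinite, enumerate $E_{\varepsilon} = \{e_1 < e_2 < \cdots\}$, and set $A := \{e_{2^k} : k \geq 1\}$. Since $e_j \geq j$, this $A$ satisfies $A(n) \leq \log_2 n + 1$, so $\operatorname{d}(A) = 0$, while $\sum_{n \in A} c_n \geq \sum_{k \geq 1} \varepsilon = \infty$. Hence I may assume $c_n \to 0$ and, after truncating finitely many initial terms if necessary, $c_n \leq 1$ for all $n$.

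For the main case, I would put $N_0 := 0$ and recursively set $N_k := \min\{N > N_{k-1} : \sum_{n=N_{k-1}+1}^{N} c_n \geq 1\}$, where the minimum exists by the divergence of $\sum c_n$. Writing $B_k := \{N_{k-1}+1, \dots, N_k\}$ and $L_k := |B_k|$, one checks that $c_n \to 0$ forces $L_k \to \infty$. I would then split each $B_k$ into $m_k := \lceil L_k/k \rceil$ consecutive sub-blocks of length approximately $k$, form $A_k$ by picking a maximizer of $c_n$ from each sub-block, and take $A := \bigcup_{k \geq 1} A_k$. The divergence of $\sum_{n \in A} c_n$ follows because the maximum of $c_n$ over a sub-block is at least its average, giving $\sum_{n \in A_k} c_n \geq (m_k/L_k) \sum_{n \in B_k} c_n \geq 1/k$, and the harmonic series diverges.

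To verify $\operatorname{d}(A) = 0$, I would apply Stolz-Cesàro to $\sum_{j \leq k} m_j$ and $\sum_{j \leq k} L_j = N_k$, together with $m_k/L_k \to 0$, to conclude $|A \cap [1, N_k]|/N_k \to 0$. Because $A$ has at most one point in each sub-block, $|A \cap (N_{k-1}, N]|$ grows linearly in $N - N_{k-1}$ with slope $O(1/k)$, which lets me interpolate the density estimate to every $N \in B_k$. The hard part is precisely this interpolation: selecting instead the $\lceil L_k/k \rceil$ largest values in $B_k$ without sub-blocking could permit the chosen indices to cluster near one end of the block and cause $|A \cap [1, N]|/N$ to spike for $N$ inside $B_k$, so the uniformly sized sub-block refinement is introduced to spread the chosen indices and prevent this.
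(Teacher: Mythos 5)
Your construction is correct, but it takes a genuinely different route from the paper. The paper never proves Theorem~\ref{Aue} directly (it is quoted from Auerbach); it recovers it as the case $\psi(x)=x$ of Theorem~\ref{Th1}, whose proof follows Noll--Stadler: one builds a binary tree of subsets of $\N$ by repeatedly splitting a set into its odd- and even-indexed halves, keeps at each level a half on which the sub-series still diverges, and assembles $A$ by using the $k$-th selected set (of relative density $2^{-k}$) on a block $(n_{k-1},n_k]$ chosen with $n_k>2n_{k-1}$ and block sum exceeding $k$. You instead split off the easy case $\limsup_{n\to\infty}c_n>0$, and in the case $c_n\to 0$ use greedy blocks of unit mass, sub-blocks of width about $k$, and a pick-the-maximum selection; divergence then follows by comparison with the harmonic series, and $\operatorname{d}(A)=0$ from Stolz--Ces\`aro together with the within-block interpolation you correctly identify as the delicate point (the sub-blocking is precisely what rules out clustering, which otherwise could push $A(N)/N$ close to $1$ inside a long block). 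What each approach buys: yours is elementary and self-contained, but it is tailored to the linear density and needs the case distinction; the paper's halving scheme requires no case split and, because the selected sets have geometrically decaying relative density on every block, it yields the stronger conclusion \eqref{Cond} simultaneously for all $\psi\in\D_2$, which is what the generalization of Auerbach's theorem is after. One minor imprecision in your write-up: $\sum_{n\in A_k}c_n\ge (m_k/L_k)\sum_{n\in B_k}c_n$ is not literally the max-versus-average step; the clean statement is $\sum_{n\in A_k}c_n\ge \bigl(\max_S|S|\bigr)^{-1}\sum_{n\in B_k}c_n\ge \tfrac{1}{k}\sum_{n\in B_k}c_n$ when every sub-block $S$ has length at most $k$, which still gives the needed lower bound $1/k$.
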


We note that Theorem~\ref{Aue} has been independently rediscovered in subsequent works \cite{EK,NS}, and, in a stronger form involving lacunary sub-series in \cite{Ag,SF}.
An example illustrating Theorem~\ref{Aue} is the harmonic series, where one can consider $A$ to be the set of prime numbers, which has a null density (by the Prime number theorem) and for which \eqref{subseries} diverges by \cite[Theorem~1.13]{Apostol2}.

Let $\mathcal{I}$ denote the class of non-increasing, non-negative real sequences $\left\{c_n\right\}_n$ that satisfy the condition
	\begin{equation}\label{Salat-condition30}
	\sum_{n=1}^\infty c_n=\infty.
	\end{equation} 
In 1947, Hamming \cite{Hamming,Hamming2} stated that for a strictly increasing sequence of non-negative integers $\{\varphi(n)\}_n$, the series $\sum_{n=1}^{\infty} c_{\varphi(n)}$ diverges whenever $\left\{c_n\right\}_n \in \mathcal{I}$, if and only if
\begin{equation}\label{Ham_cond}
	\limsup_{n \to \infty} \frac{\varphi(n)}{n} < \infty.
\end{equation}
Consider the set $A=\{\varphi(n): n \in \mathbb{N}\}$, where $\{\varphi(n)\}_n$ is an arbitrary sequence defined as above. It follows that $A \subset \mathbb{N}$, and from \eqref{LUdens} and \eqref{Ham_cond}, we have
	$$
	\underline{\operatorname{d}}(A) = \liminf_{n \to \infty} \frac{n}{\varphi(n)} > 0.
	$$
Consequently, Hamming's theorem can be reformulated as follows.

\begin{thm}\label{New-Hamm}
\textnormal{\cite{Hamming2}}
Let \(A \subset \mathbb{N}\) be an infinite set. Then \(\underline{\operatorname{d}}(A) = 0\) if and only if there exists \(\{c_n\}_n \in \mathcal{I}\) such that 	
	\begin{equation}\label{convergent-sum0}
	\sum_{n=1}^\infty \chi_A(n)c_n<\infty.
	\end{equation}
\end{thm}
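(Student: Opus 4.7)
The plan is to prove both directions by direct construction, exploiting the equivalent description $\underline{\operatorname{d}}(A)=\liminf_{n\to\infty}A(n)/n$. Writing $A=\{\varphi(n):n\in\N\}$ with $\varphi$ strictly increasing, \eqref{LUdens} also lets me phrase the hypothesis $\underline{\operatorname{d}}(A)=0$ as $\limsup\varphi(n)/n=\infty$, which in turn is equivalent to the existence of arbitrarily large $m$ for which the ratio $A(m)/m$ is arbitrarily small.

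For the implication $\underline{\operatorname{d}}(A)=0\Rightarrow$ existence of $\{c_n\}$, I would choose recursively a strictly increasing sequence $m_1<m_2<\cdots$ satisfying $m_k\geq 2m_{k-1}$ and $A(m_k)/m_k<1/k^2$, set $m_0=0$, and then define the non-increasing step function
\begin{equation*}
c_n=\frac{1}{m_k}\quad\text{for}\quad m_{k-1}<n\leq m_k.
\end{equation*}
The growth condition $m_k\geq 2m_{k-1}$ yields $\sum_{n=m_{k-1}+1}^{m_k}c_n=(m_k-m_{k-1})/m_k\geq 1/2$, so $\sum c_n=\infty$ and $\{c_n\}\in\mathcal{I}$. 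At the same time, on the same block the sub-series contributes at most $(A(m_k)-A(m_{k-1}))/m_k\leq A(m_k)/m_k<1/k^2$, so summing over $k$ yields \eqref{convergent-sum0}.

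For the converse direction I would argue by contrapositive. If $\underline{\operatorname{d}}(A)>0$, then there exist $M\in\N$ and $N_0$ with $\varphi(n)\leq Mn$ for all $n\geq N_0$, and hence $c_{\varphi(n)}\geq c_{Mn}$ by monotonicity. Grouping $\N$ into blocks of $M$ consecutive integers and dominating each block by its first term gives $\sum_{n=1}^\infty c_n\leq Mc_1+M\sum_{k=1}^\infty c_{Mk}$, so $\sum c_n=\infty$ forces $\sum_{k=1}^\infty c_{Mk}=\infty$; consequently $\sum\chi_A(n)c_n\geq\sum_{n\geq N_0}c_{Mn}=\infty$, contradicting \eqref{convergent-sum0}.

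The main obstacle is the recursive choice in the forward direction: the $m_k$ must simultaneously witness the density bound $A(m_k)/m_k<1/k^2$ and be spaced geometrically, so that the resulting $\{c_n\}$ is both non-increasing and has divergent sum while keeping the $A$-restricted sum summable. That the hypothesis $\liminf A(n)/n=0$ guarantees values of $m$ satisfying the density bound at arbitrarily large scales is precisely what makes this combined selection possible; once it is secured, both estimates reduce to routine block arithmetic.
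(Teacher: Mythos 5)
Your proof is correct, but it follows a different route from the paper, which in fact does not prove Theorem~\ref{New-Hamm} directly at all: it quotes it from Hamming and \v{S}al\'at and instead proves $\psi$-weighted generalizations whose specialization to $\psi(x)=x$ recovers the statement. For the forward direction the paper (Theorem~\ref{psi-Hamming}) first extracts, via Lemma~\ref{lem4}, a subsequence $\{n_k\}_k$ along which the increments satisfy the ratio bound \eqref{Hyp1} with the geometric factor $2^k$ and the monotonicity condition \eqref{Hyp2}, and then defines $c_m$ blockwise with weights $B(m)$ and the normalizing factor $2^k\bigl(A_\psi(\varphi(n_{k+1}))-A_\psi(\varphi(n_k))\bigr)$, so that the sub-series contributes at most $2^{1-k}$ per block while each full block sum is bounded below; your construction instead chooses $m_k\geq 2m_{k-1}$ with $A(m_k)/m_k<1/k^2$ and sets $c_n=1/m_k$ on $(m_{k-1},m_k]$, which is simpler and perfectly adequate for the linear density, though it does not carry over verbatim to the $\psi'$-weighted setting the paper needs. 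For the converse, the paper (Theorem~\ref{Salat2-thm}(a) with $\psi(x)=x$) invokes Rajagopal's mean-comparison result (Theorem~\ref{L1}) to sandwich $\underline{\operatorname{d}}(A)$ below $\lim_n\bigl(\sum_{k\le n}\chi_A(k)c_k\bigr)/\bigl(\sum_{k\le n}c_k\bigr)=0$, whereas you argue by contrapositive: $\underline{\operatorname{d}}(A)>0$ gives $\varphi(n)\leq Mn$ eventually, and the blocking estimate $\sum_n c_n\leq Mc_1+M\sum_k c_{Mk}$ forces $\sum_k c_{Mk}=\infty$, hence $\sum_n\chi_A(n)c_n\geq\sum_{n\geq N_0}c_{Mn}=\infty$. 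Your argument is more elementary and self-contained (essentially the classical Hamming--\v{S}al\'at proof), while the paper's route buys the general asymptotic $\psi$-density versions and, in the converse direction, the stronger two-sided comparison of weighted averages. All the steps you give check out, including the recursive choice of the $m_k$ (possible since $\liminf_n A(n)/n=0$ yields arbitrarily large admissible $m$) and the monotonicity and divergence of your $\{c_n\}$.
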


In his proof, Hamming showed only the existence of $\{c_n\}_n \in \mathcal{I}$ that satisfies \eqref{convergent-sum0} when $\underline{\operatorname{d}}(A) = 0$, leaving the converse implication to the reader; which was later proved by \v{S}al\'at in \cite[Theorem~2]{Salat}. Note that, in general, we cannot replace $\underline{\mathrm{d}}(A)=0$ with $\mathrm{d}(A)=0$ in the converse  implication of Theorem~\ref{New-Hamm}. Indeed, for any $\delta \in (0,1]$, there always exists a subset $A \subset \N$ such that $\underline{\operatorname{d}}(A) = 0\) and \(\overline{\operatorname{d}}(A) = \delta$ \cite{LP}. This, together with the first implication of Theorem~\ref{New-Hamm}, leads to the following proposition.

\begin{proposition}\label{Salat-example}
For every $\delta \in \left(0, 1\right]$, there exists a set $A \subset \N$ and a real sequence $\{c_n\}_n$ with the following properties: 
	$$
	\overline{\operatorname{d}}(A) = \delta, \quad
	\underline{\operatorname{d}}(A) = 0, \quad
	\sum_{n=1}^\infty c_n = \infty, 
\quad
	\sum_{n=1}^\infty \chi_A(n)c_n < \infty.
	$$
\end{proposition}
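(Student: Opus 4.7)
The statement is essentially a packaging of two already-cited facts, so my plan is simply to identify the pieces and verify that they slot together cleanly.

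The overall strategy is the one sketched in the paragraph immediately preceding the proposition: first use the construction from \cite{LP} to produce a set with the required upper/lower density profile, and then apply the ``only if'' direction of Theorem~\ref{New-Hamm} to that set to produce the companion sequence $\{c_n\}_n$. Concretely, I would fix $\delta\in(0,1]$, invoke \cite{LP} to obtain an infinite set $A\subset\N$ with $\underline{\operatorname{d}}(A)=0$ and $\overline{\operatorname{d}}(A)=\delta$, and then feed this $A$ into Theorem~\ref{New-Hamm}. Since $\underline{\operatorname{d}}(A)=0$, the theorem produces a sequence $\{c_n\}_n\in\mathcal{I}$ with $\sum_{n=1}^\infty\chi_A(n)c_n<\infty$. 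By the very definition of the class $\mathcal{I}$ given in \eqref{Salat-condition30}, the sequence $\{c_n\}_n$ is non-negative and satisfies $\sum_{n=1}^\infty c_n=\infty$, so all four required properties are simultaneously in force.

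The only content-bearing checks are bookkeeping: confirming that the direction of Theorem~\ref{New-Hamm} being used is the one actually proved by Hamming (the ``$\underline{\operatorname{d}}(A)=0$ $\Rightarrow$ existence of $\{c_n\}_n$'' implication), and confirming that the set produced by \cite{LP} is infinite (which is automatic because $\overline{\operatorname{d}}(A)=\delta>0$). I do not anticipate any genuine obstacle, since both ingredients are quoted verbatim from the literature; the proposition is really an observation that the two results can be combined without conflict. If anything, the only subtle point to mention explicitly is that the $\{c_n\}_n$ obtained from Theorem~\ref{New-Hamm} need not be unique or tied to $\delta$ in any way, which is fine because the proposition only asserts existence.
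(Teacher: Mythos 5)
Your proposal is correct and is essentially the paper's own argument: the paragraph immediately preceding the proposition derives it exactly as you do, by taking from \cite{LP} a set $A$ with $\underline{\operatorname{d}}(A)=0$ and $\overline{\operatorname{d}}(A)=\delta$ and feeding it into the first implication of Theorem~\ref{New-Hamm}, the membership $\{c_n\}_n\in\mathcal{I}$ then supplying $\sum_{n=1}^\infty c_n=\infty$. (The explicit \v{S}al\'at-style construction in Appendix~\ref{App-A} is only an alternative, self-contained proof of a particular version with $\delta\in(0,1)$ and $\overline{\operatorname{d}}(A)\geq\delta$, included mainly to run parallel to the proof of Proposition~\ref{sharpness-prop}, so your route is the one that yields the proposition exactly as stated.)
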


In 1958, Moser \cite{Moser} proved that if $A\subset\N$ satisfies $\sum_{n=1}^\infty \frac{\chi_A(n)}{n} < \infty$, then $\operatorname{d}(A) = 0$. In 1964, \v{S}al\'at \cite{Salat} extended this result to a broader class of sequences. Indeed, he showed that this convergence phenomenon holds for certain non-increasing, non-negative sequences $\{c_n\}_n$ that tend to zero.

\begin{thm}\label{Salat-thm1}
	\textnormal{\cite[Theorem~1]{Salat}}
	Let $\{c_n\}_n$ be a non-negative non-increasing sequence tending to zero such that 
	\begin{equation}\label{Salat-condition}
		\liminf_{n\to\infty}nc_n>0.
	\end{equation}
If $A\subset\N$ is an infinite set such that
\eqref{convergent-sum0} holds, then $\operatorname{d}(A)=0$.
\end{thm}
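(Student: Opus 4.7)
The plan is to reduce the theorem to the following special case noted earlier by Moser~\cite{Moser}: if $A\subset\N$ satisfies $\sum_{n\geq 1}\chi_A(n)/n<\infty$, then $\operatorname{d}(A)=0$. For the reduction, the hypothesis $\liminf_{n\to\infty}nc_n>0$ furnishes constants $\alpha>0$ and $N_0\in\N$ with $c_n\geq \alpha/n$ for all $n\geq N_0$. Combined with the convergence of $\sum_n \chi_A(n)c_n$, and after absorbing the finitely many terms with $n<N_0$, this yields $\sum_{n=1}^\infty \chi_A(n)/n<\infty$. Note that the non-increasing assumption on $\{c_n\}_n$ and the condition $\{c_n\}_n\to 0$ are not actually needed in the reduction step; only $\liminf nc_n>0$ is used.

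For the reduced statement, I would apply Abel summation with $a_n=\chi_A(n)$ and $b_n=1/n$ to obtain
\[
\sum_{n=1}^N \frac{\chi_A(n)}{n}=\frac{A(N)}{N}+\sum_{n=1}^{N-1}\frac{A(n)}{n(n+1)}.
\]
Since the left-hand side converges as $N\to\infty$ and both summands on the right are non-negative, I conclude in particular that the series $\sum_{n=1}^\infty A(n)/(n(n+1))$ converges.

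To finish, I argue by contradiction that $\overline{\operatorname{d}}(A)=0$. If instead $\overline{\operatorname{d}}(A)>0$, I fix $\varepsilon\in(0,\overline{\operatorname{d}}(A))$ and recursively choose indices $N_j\to\infty$ with $A(N_j)\geq \varepsilon N_j$ and $N_{j+1}>2N_j$. Monotonicity of $A$ gives $A(n)\geq \varepsilon N_j$ throughout $n\in[N_j,2N_j]$, while the elementary identity
\[
\sum_{n=N_j}^{2N_j}\frac{1}{n(n+1)}=\frac{1}{N_j}-\frac{1}{2N_j+1}>\frac{1}{2N_j}
\]
shows that each disjoint block $[N_j,2N_j]$ contributes at least $\varepsilon/2$ to the series $\sum_n A(n)/(n(n+1))$. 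Summing over $j$ contradicts the convergence established above, so $\overline{\operatorname{d}}(A)=0$, and hence $\operatorname{d}(A)=0$.

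The main obstacle is exactly this last passage, from summability of $A(n)/(n(n+1))$ to vanishing of $A(n)/n$. A direct use of the monotonicity of $\{c_n\}_n$ together with $c_n\geq \alpha/n$ only yields the boundedness $A(n)/n\leq C/\alpha$, which is far too weak; upgrading this to convergence to zero really requires shifting the weights onto $A(n)$ via Abel summation and then combining it with a sparsification of indices where $A(n)/n$ would be bounded below. This is where the non-trivial content of the theorem sits.
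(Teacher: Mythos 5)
Your argument is correct, but it follows a genuinely different route from the paper. The paper does not prove Theorem~\ref{Salat-thm1} in isolation: it establishes the weighted generalization Theorem~\ref{S1-thm} (of which this statement is the case $\psi(x)=x$) by a contrapositive Cauchy-criterion argument — assuming $\overline{\operatorname{d}}(A)>0$, one bounds a block $s_{n_0}-s_N=\sum_{k=N+1}^{n_0}\chi_A(k)c_k$ from below by $c_{n_0}\bigl(A(n_0)-A(N)\bigr)$ using the monotonicity of $\{c_n\}_n$, and then the hypothesis $\liminf_{n\to\infty}nc_n>0$ turns this into a uniform positive lower bound along infinitely many blocks, so the sub-series violates Cauchy's condition and diverges. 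You instead use \eqref{Salat-condition} only through the comparison $c_n\geq\alpha/n$ to reduce to Moser's special case $\sum_n\chi_A(n)/n<\infty$, and then prove that case by Abel summation (yielding convergence of $\sum_n A(n)/(n(n+1))$) followed by a doubling-block sparsification; each step checks out, including the telescoping estimate $\sum_{n=N_j}^{2N_j}\frac{1}{n(n+1)}>\frac{1}{2N_j}$ and the disjointness of the blocks enforced by $N_{j+1}>2N_j$, so infinitely many blocks each contributing at least $\varepsilon/2$ contradict convergence and give $\overline{\operatorname{d}}(A)=0$. What your route buys: it is elementary and makes explicit, as you note, that the non-increasing and $c_n\to 0$ hypotheses are superfluous once \eqref{Salat-condition} holds, since only the comparison with harmonic weights is used. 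What the paper's route buys: the block/Cauchy argument uses no reduction to a single model weight and is formulated so that it carries over directly to the asymptotic $\psi$-density setting of Theorem~\ref{S1-thm}, which is the actual objective of the paper, whereas your reduction to the weight $1/n$ is tailored to the linear density.
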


By the divergence of the harmonic series, the condition \eqref{Salat-condition} for $\{c_n\}_n$ implies \eqref{Salat-condition30}. 
Conversely, \eqref{Salat-condition30} does not generally imply \eqref{Salat-condition}.  
As shown by Proposition~\ref{Salat-example}, the assumption \eqref{Salat-condition} in Theorem~\ref{Salat-thm1} cannot be replaced with the weaker condition \eqref{Salat-condition30}.

\begin{remark}
A particular version of Proposition~\ref{Salat-example}, where $\delta \in (0,1)$ and $\overline{\operatorname{d}}(A) \geq \delta$, was essentially proved by \v{S}al\'at  \cite[pp.~211--212]{Salat} using a different method. However, since the notation and terminology in \cite{Salat} are very different from ours, and since many details are missing in \cite{Salat}, a complete proof is given in Appendix~\ref{App-A} for the convenience of the reader.
\end{remark}

We proceed to deal with sub-signed series' derived from the series $\sum_{n=1 }^{\infty}c_n$, where $\{c_n\}_n$ is a non-negative real sequence. A sub-signed series is represented by 
$
	\sum_{n=1 }^{\infty}m_nc_n,
$
where $\{m_n\}_n$ is a real sequence with values restricted to $\{-1, 0, 1\}$. This can be expressed equivalently as
	\begin{eqnarray}\label{Sub-signed}
	\sum_{n=1 }^{\infty}(\chi_{A}(n)-\chi_B(n))c_n,
	\end{eqnarray}
where
	\begin{eqnarray}\label{Sets}
	A:=\{n\in \mathbb{N}: m_n=1\} \quad \text{and} \quad B:=\{n\in \mathbb{N}: m_n=-1\}.
	\end{eqnarray}
For example, the signed series (alternating series) $\sum_{n=1 }^{\infty}\frac{(-1)^{n+1}}{n}$ converges, and it is straightforward to see that the linear density of either set $A$ and $B$ is $\frac{1}{2}$. 

Recently, Gasull and Ma\~{n}osas studied the density of the sets $A$ and $B$ in \eqref{Sets}.

\begin{thm}\label{Gasull}
\textnormal{\cite[Theorem C]{GM}} 
Let $\{c_n\}_n$ be a positive and non-increasing real sequence tending to zero. If the sub-signed series \eqref{Sub-signed} converges, then
	$$
	\lim\limits_{n\to \infty}(A(n)-B(n))c_n=0.
	$$
In addition, if \eqref{Salat-condition} holds, then 
	$$
	\lim_{n\to\infty}(A(n)-B(n))n^{-1}=0.
	$$
\end{thm}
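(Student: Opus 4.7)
The first assertion of Theorem~\ref{Gasull} is essentially Kronecker's lemma in disguise, and I plan to exploit this. Set $m_k := \chi_A(k) - \chi_B(k) \in \{-1, 0, 1\}$ so that $D_n := A(n) - B(n) = \sum_{k=1}^n m_k$, and the sub-signed series \eqref{Sub-signed} becomes $\sum_{k=1}^\infty m_k c_k$, whose partial sums $S_n$ converge to some $S \in \mathbb{R}$ by hypothesis. Because $\{c_k\}$ is positive, non-increasing and tends to $0$, the auxiliary sequence $b_k := 1/c_k$ is positive, non-decreasing, and diverges to $\infty$. Kronecker's lemma applied to $\sum_k y_k$ with $y_k := m_k c_k$ and the weights $\{b_k\}$ then gives
\begin{equation*}
(A(n) - B(n))\,c_n \;=\; c_n D_n \;=\; \frac{1}{b_n}\sum_{k=1}^n b_k y_k \;\longrightarrow\; 0.
\end{equation*}

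For a self-contained argument I would reprove Kronecker's lemma in situ. Setting $S_0 := 0$, Abel summation yields
\begin{equation*}
\sum_{k=1}^n b_k y_k \;=\; b_n S_n \;-\; \sum_{k=1}^{n-1} S_k (b_{k+1}-b_k),
\end{equation*}
and dividing by $b_n$ gives $c_n D_n = S_n - b_n^{-1}\sum_{k=1}^{n-1} S_k (b_{k+1}-b_k)$. The subtracted term is a weighted average of $\{S_k\}_{k=1}^{n-1}$ with non-negative weights $b_{k+1}-b_k = 1/c_{k+1}-1/c_k$ whose total mass $b_n - b_1$ tends to $\infty$. By a standard Toeplitz (weighted Ces\`aro) argument this average converges to the same limit $S$ as $S_n$, so $c_n D_n \to S - S = 0$. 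The only mildly delicate step is this Toeplitz limit, which is precisely where monotonicity of $\{c_k\}$ enters, namely to guarantee that the weights are non-negative; I expect it to be the main technical point, though it is entirely standard.

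The second assertion then follows by a single division. The hypothesis $\liminf_{n\to\infty} nc_n > 0$ supplies $\alpha > 0$ and an index $N$ with $n c_n \geq \alpha$ for all $n \geq N$, and for such $n$,
\begin{equation*}
\left|\frac{A(n)-B(n)}{n}\right| \;=\; \frac{|(A(n)-B(n))\,c_n|}{n c_n} \;\leq\; \frac{|(A(n)-B(n))\,c_n|}{\alpha} \;\to\; 0
\end{equation*}
as $n\to\infty$, by the first part.
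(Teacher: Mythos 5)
Your proposal is correct and follows essentially the same route as the paper: the paper's proof of the generalized version (Theorem~\ref{Th3}, which reduces to Theorem~\ref{Gasull} when $\psi(x)=x$) rests on exactly the same Abel-summation identity (Lemma~\ref{Lem1}) followed by a Toeplitz/weighted-Ces\`aro argument (Lemma~\ref{Lem2}), and the second assertion is obtained by the same division using $\liminf_{n\to\infty} nc_n>0$ (Lemma~\ref{lem3}). Packaging the first step as Kronecker's lemma is just a named shortcut for the identical computation.
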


The second statement in Theorem~\ref{Gasull} means that $\operatorname{d}(A)$ exists if and only if $\operatorname{d}(B)$ exists, and in this case, the two densities are equal. 

\medskip
The purpose of this paper is to generalize Theorems~\ref{Aue}--\ref{Gasull} from the linear density $\operatorname{d}$ to an asymptotic $\psi$-density $\operatorname{d}_{\psi}$, the latter of which is defined and  discussed in Section~\ref{density-sec} below. In so doing, our aim is to answer the following research questions.
\begin{itemize}
\item[(Q1)] 
Regarding Theorem~\ref{Aue}, can we find a set $A\subset\N$ satisfying a condition stronger than  $\operatorname{d}(A)=0$ such that the sub-series $\sum_{n=1}^\infty \chi_A(n)c_n$ still diverges? In other words, does the conclusion of Theorem~\ref{Aue} still hold if the set $A$ is replaced with a smaller set in terms of densities? 

\item[(Q2)] 
How are the properties of the sequence $\{c_n\}_n$ in Theorem~\ref{New-Hamm} affected if the assumption $\underline{\operatorname{d}}(A)=0$ is replaced with $\underline{\operatorname{d}}_{\psi}(A)=0$ for a certain unbounded concave function $\psi:(0,\infty)\to (0,\infty)$, given that \eqref{Salat-condition30} and \eqref{convergent-sum0}  still hold?

\item[(Q3)] 
How is the density of $A\subset\N$ in Theorem~\ref{New-Hamm} affected if the assumption that $\{c_n\}_n$ is non-increasing is replaced with either a stronger or a weaker assumption on monotonicity such that \eqref{convergent-sum0} still holds?

\item[(Q4)] 
How is the density of $A\subset\N$ in Theorem~\ref{Salat-thm1} affected if the assumption \eqref{Salat-condition} is replaced with an analogous weighted assumption such that \eqref{convergent-sum0} still holds? 

\item[(Q5)] 
If the assumption that $\{c_n\}_n$ is  non-increasing in Theorem~\ref{Gasull} is replaced with either a stronger or a weaker assumption on monotonicity depending on a certain unbounded function $\psi:(0,\infty)\to (0,\infty)$, and if the sub-signed series \eqref{Sub-signed} converges, then what can be said about the asymptotic $\psi$-densities of the sets $A$ and $B$ defined in \eqref{Sets}?
\end{itemize}

%%%%%%%%%%%%%%%%%%%%%%%%%%%%
%%
%% SECTION 2
%%
%%%%%%%%%%%%%%%%%%%%%%%%%%%%

\section{Asymptotic $\psi$-density}\label{density-sec}

We begin by defining a class $\D$ of strictly increasing, differentiable, and unbounded functions $\psi: (0,\infty) \to (0,\infty)$. For convenience, we set
	\begin{equation*}%\label{notation-sum}
	A_{\psi }(n):=\sum_{k=1}^n\psi'(k)\chi_A(k)
	\end{equation*}
for any subset $A \subset \N$. A particularly useful property for a function $\psi\in\D$ is that it satisfies 	
	\begin{align}\label{asym}
	\sum_{k=1}^n\psi'(k)\sim \psi(n),\quad n\to\infty.
	\end{align} 
The concepts of $\psi$-density and asymptotic $\psi$-density were introduced in \cite{HL} as follows.

\begin{definition}\label{Def1}
\textnormal{\cite{HL}}
Let $\psi\in\D$. The \textit{lower and upper $\psi$-densities} of a set $A\subset\N$ are defined, respectively, by
\begin{eqnarray*}
	\underline{\operatorname{d}}_\psi(A)
	=\liminf_{n\to\infty}\frac{A_{\psi }(n)}{\sum_{k=1}^n\psi'(k)}	
	\quad\textnormal{and}\quad
\overline{\operatorname{d}}_\psi(A)
	=\limsup_{n\to\infty}\frac{A_{\psi }(n)}{\sum_{k=1}^n\psi'(k)}.
	\end{eqnarray*}
If $\underline{\operatorname{d}}_\psi(A)= \overline{\operatorname{d}}_\psi(A)$, then their common value $\operatorname{d}_\psi(A)$ is called the \textit{$\psi$-density} of $A$. Additionally, if $\psi$ satisfies \eqref{asym}, then the quantities
	\begin{eqnarray*}
	\underline{\operatorname{d}}_\psi(A)
	=\liminf_{n\to\infty}\frac{A_{\psi }(n)}{\psi(n)}	
	\quad\textnormal{and}\quad
\overline{\operatorname{d}}_\psi(A)
	=\limsup_{n\to\infty}\frac{A_{\psi }(n)}{\psi(n)}
	\end{eqnarray*}	
are called the \textit{asymptotic lower and upper $\psi$-densities} of $A$, respectively. Their common value $\operatorname{d}_\psi(A)$, if it exists, is called the \textit{asymptotic $\psi$-density} of $A$.
\end{definition}

Two useful subclasses of $\D$ were introduced in \cite{HL} as follows.

\begin{definition}
\textnormal{\cite{HL}}
Let $\D_1$ denote the class of all concave functions in $\D$, and let $\D_2$ denote the class of all convex functions $\psi$ in $\D$ that satisfy
	\begin{equation}\label{n+1-asymp-n}
	\psi(n+1)\sim \psi(n),\quad n\to\infty.
	\end{equation}
\end{definition}

\begin{lemma}\label{asymptotic-lemma}
\textnormal{\cite{HL}}
If $\psi\in\D_1$, then \eqref{asym} and \eqref{n+1-asymp-n} both hold. If $\psi\in\D_2$, then \eqref{asym} holds.
\end{lemma}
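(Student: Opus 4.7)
The plan is to bracket the sum $\sum_{k=1}^n\psi'(k)$ between two quantities each asymptotically equivalent to $\psi(n)$, by exploiting the monotonicity of $\psi'$ that comes from concavity or convexity of $\psi$, and then pass to the limit.

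First I would record the two sandwich inequalities. Since $\psi$ is differentiable, for every integer $k\ge 1$ one has $\psi(k+1)-\psi(k)=\int_k^{k+1}\psi'(t)\,dt$. If $\psi\in\D_1$, then $\psi'$ is non-increasing, hence
\[
\psi'(k+1)\le \psi(k+1)-\psi(k)\le \psi'(k),
\]
and summing for $k=1,\dots,n$ (telescoping the middle term) gives
\[
\psi(n+1)-\psi(1)\le \sum_{k=1}^n\psi'(k)\le \psi'(1)+\psi(n)-\psi(1).
\]
If $\psi\in\D_2$, then $\psi'$ is non-decreasing and the same mean-value argument produces the dual sandwich
\[
\psi(n)-\psi(1)+\psi'(1)\le \sum_{k=1}^n\psi'(k)\le \psi(n+1)-\psi(1).
\]

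The main obstacle lies in the $\D_1$-case, where \eqref{n+1-asymp-n} is not assumed and has to be derived. My approach is to prove that $\psi'(n)=o(\psi(n))$. Fix any $a\in(0,1)$; since $\psi$ is positive and concave on $(0,\infty)$, the mean value theorem yields some $\xi\in(a,n)$ with $(\psi(n)-\psi(a))/(n-a)=\psi'(\xi)\ge\psi'(n)$, so
\[
\psi'(n)\le \frac{\psi(n)-\psi(a)}{n-a}\le \frac{\psi(n)}{n-a},
\]
which forces $\psi'(n)/\psi(n)\to 0$. Combining this with the elementary bound $\psi(n+1)-\psi(n)\le\psi'(n)$ yields $\psi(n+1)/\psi(n)\to 1$, i.e.\ \eqref{n+1-asymp-n}. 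Dividing the first sandwich by $\psi(n)$ and invoking $\psi(n)\to\infty$ together with $\psi(n+1)\sim\psi(n)$ then forces $\sum_{k=1}^n\psi'(k)/\psi(n)\to 1$, which is \eqref{asym}.

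For $\psi\in\D_2$, the asymptotic \eqref{n+1-asymp-n} is part of the definition of the class, so it requires no separate argument. Dividing the dual sandwich by $\psi(n)$, both bounding expressions tend to $1$ directly, yielding \eqref{asym}. The concave case is thus the substantive one: the slope estimate $\psi'(n)\le (\psi(n)-\psi(a))/(n-a)$ is the single non-routine ingredient, and identifying the right comparison between $\psi'(n)$ and $\psi(n)$ is the step I expect to require the most care.
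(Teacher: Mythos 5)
The paper does not prove this lemma at all: it is quoted from the preprint \cite{HL}, so there is no in-paper argument to compare against. Your proof is correct and is the natural one. The sandwich inequalities follow exactly as you say from the monotonicity of $\psi'$ (the same integral-comparison device the paper itself uses to prove Lemma~\ref{lem2}), and you correctly identify and close the only real gap in the concave case, namely that \eqref{n+1-asymp-n} must be derived rather than assumed: the chord-slope bound $\psi'(n)\le(\psi(n)-\psi(a))/(n-a)\le\psi(n)/(n-a)$ gives $\psi'(n)=o(\psi(n))$, hence $1\le\psi(n+1)/\psi(n)\le 1+\psi'(n)/\psi(n)\to 1$, and dividing the sandwich by $\psi(n)$ (using $\psi(n)\to\infty$ to kill the constants) yields \eqref{asym}; in the convex case \eqref{n+1-asymp-n} is part of the definition of $\D_2$ and the dual sandwich finishes the argument. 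No gaps.
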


If $A \subset\N$ is any set, then \cite[Corollary~2.8]{HL} shows that 
	\begin{eqnarray}
	0&\leq& \underline{\operatorname{d}}(A) \leq \underline{\operatorname{d}}_\psi(A) \leq \overline{\operatorname{d}}_\psi(A) \leq \overline{\operatorname{d}}(A) \leq 1,\quad\text{if}\ \psi\in\D_1,\label{D1-densities}\\
	0&\leq& \underline{\operatorname{d}}_{\psi}(A) \leq \underline{\operatorname{d}}(A) \leq \overline{\operatorname{d}}(A)\leq \overline{\operatorname{d}}_{\psi}(A) \leq 1,\quad \text{if}\ \psi\in\D_2.\label{D2-densities}
	\end{eqnarray}
Due to these inequalities, the linear density can be thought of as the ``terminal density'' for the asymptotic $\psi$-densities when $\psi \in \D_1$, and as the ``initial density'' for the asymptotic $\psi$-densities when $\psi \in \D_2$.

We close this section with the following result, which generalizes \eqref{LUdens} from $\psi(x)=x$ to $\psi \in \D_1$. 

\begin{theorem}\label{lem1}
\textnormal{\cite{HL}}
Let $A=\{\varphi(n): n \in \mathbb{N}\}$, where $\{\varphi(n)\}_n$ is a strictly increasing sequence of non-negative integers. Then
	\begin{align*}\label{lim1}
		\underline{\operatorname{d}}_{\psi}(A)=\liminf_{n\to\infty}\frac{A_{\psi}(\varphi(n))}{\psi(\varphi(n))} \quad \text{and} \quad \overline{\operatorname{d}}_{\psi}(A)=\limsup_{n\to\infty}\frac{A_{\psi}(\varphi(n))}{\psi(\varphi(n))},
	\end{align*}
for any $\psi \in \D_1$.
\end{theorem}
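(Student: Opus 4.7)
The plan is to exploit the step-function structure of $A_\psi$. Since $A$ contains no elements strictly between $\varphi(n)$ and $\varphi(n+1)$, the function $A_\psi$ is constant on each gap $[\varphi(n), \varphi(n+1)-1]$ with value $A_\psi(\varphi(n))=\sum_{j=1}^n\psi'(\varphi(j))$, and jumps by $\psi'(\varphi(n+1))$ at the integer $\varphi(n+1)$. For any integer $m\geq\varphi(1)$, let $n(m)$ be the unique index satisfying $\varphi(n(m))\leq m<\varphi(n(m)+1)$. Since $\psi$ is strictly increasing, the ratio $A_\psi(m)/\psi(m)$ is non-increasing on each gap, attaining its maximum at the left endpoint $m=\varphi(n)$ and its minimum at the right endpoint $m=\varphi(n+1)-1$.

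For the upper case this monotonicity gives
\begin{equation*}
\sup_{m\geq\varphi(N)}\frac{A_\psi(m)}{\psi(m)}=\sup_{n\geq N}\frac{A_\psi(\varphi(n))}{\psi(\varphi(n))}
\end{equation*}
for every $N$, and letting $N\to\infty$ yields the claimed formula for $\overline{\operatorname{d}}_\psi(A)$.

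The lower case is more delicate. The same monotonicity identifies
\begin{equation*}
\underline{\operatorname{d}}_\psi(A)=\liminf_{n\to\infty}\frac{A_\psi(\varphi(n))}{\psi(\varphi(n+1)-1)},
\end{equation*}
so the task reduces to showing that this liminf coincides with $\liminf_n A_\psi(\varphi(n))/\psi(\varphi(n))$, up to the harmless reindexing $n\mapsto n+1$. Using $A_\psi(\varphi(n))=A_\psi(\varphi(n+1))-\psi'(\varphi(n+1))$, I would split
\begin{equation*}
\frac{A_\psi(\varphi(n))}{\psi(\varphi(n+1)-1)}=\frac{A_\psi(\varphi(n+1))}{\psi(\varphi(n+1))}\cdot\frac{\psi(\varphi(n+1))}{\psi(\varphi(n+1)-1)}-\frac{\psi'(\varphi(n+1))}{\psi(\varphi(n+1)-1)},
\end{equation*}
and show that the middle factor tends to $1$ while the subtracted remainder tends to $0$. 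The former is \eqref{n+1-asymp-n} applied at the points $\varphi(n+1)-1$, and the latter follows from the auxiliary relation $\psi'(k)/\psi(k)\to 0$, which is obtained by subtracting two consecutive instances of \eqref{asym} and invoking \eqref{n+1-asymp-n}.

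I expect the main obstacle to be controlling the multiplicative perturbation from $\psi(\varphi(n+1))/\psi(\varphi(n+1)-1)$: to ensure that it contributes only a vanishing \emph{additive} error, one needs the uniform bound $A_\psi(\varphi(n+1))\leq\sum_{k=1}^{\varphi(n+1)}\psi'(k)$ together with \eqref{asym}, giving $A_\psi(\varphi(n+1))/\psi(\varphi(n+1))\leq 1+o(1)$. With this in place, the displayed identity yields $A_\psi(\varphi(n))/\psi(\varphi(n+1)-1)=A_\psi(\varphi(n+1))/\psi(\varphi(n+1))+o(1)$, so the two liminfs coincide and the formula for $\underline{\operatorname{d}}_\psi(A)$ follows.
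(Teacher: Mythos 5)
The paper itself gives no proof of Theorem~\ref{lem1}: it is quoted from the preprint \cite{HL}, so there is no in-paper argument to compare yours against; I can only assess your proposal on its own terms, and it is correct. The block structure you exploit is exactly the right observation: $A_\psi$ is constant between consecutive elements of $A$, so on each block the ratio $A_\psi(m)/\psi(m)$ is maximized at $m=\varphi(n)$ (which settles the limsup identity with no further hypotheses) and minimized at $m=\varphi(n+1)-1$, reducing the liminf identity to comparing denominators $\psi(\varphi(n+1)-1)$ and $\psi(\varphi(n+1))$. Your decomposition then works: the multiplicative factor tends to $1$ by \eqref{n+1-asymp-n}, the subtracted term tends to $0$ because $\psi'(m)/\psi(m)\to 0$, and the boundedness $A_\psi(m)\leq\sum_{k=1}^m\psi'(k)\sim\psi(m)$ converts the multiplicative perturbation into an additive $o(1)$, so the two liminfs agree. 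Two small remarks. First, concavity enters only through Lemma~\ref{asymptotic-lemma}, which guarantees \eqref{asym} and \eqref{n+1-asymp-n} for $\psi\in\D_1$; it is worth saying this explicitly, since your argument really proves the statement for any $\psi\in\D$ satisfying those two asymptotic relations. Second, your route to $\psi'(n)/\psi(n)\to 0$ (subtracting consecutive instances of \eqref{asym} and using \eqref{n+1-asymp-n}) is valid, but for $\psi\in\D_1$ there is a one-line alternative: concavity makes $\psi'$ non-increasing, hence bounded, while $\psi(n)\to\infty$. Finally, the theorem's densities are defined with denominator $\sum_{k=1}^n\psi'(k)$ in Definition~\ref{Def1}; you work with the asymptotic version with denominator $\psi(n)$, which is harmless since \eqref{asym} and the bound $A_\psi(n)\leq\sum_{k=1}^n\psi'(k)$ make the two liminf/limsup values coincide, but a sentence acknowledging this would make the proof airtight.
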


%%%%%%%%%%%%%%%%%%%%%%%%%%%%
%%
%% SECTION 3
%%
%%%%%%%%%%%%%%%%%%%%%%%%%%%%

\section{Generalizations of \v{S}al\'at's theorem:\\ Research question (Q4)}

Theorem~\ref{S1-thm} below generalizes \v{S}al\'at's Theorem~\ref{Salat-thm1} and reduces to it when $\psi(x)=x$. This answers Research question (Q4). The sharpness of the result is discussed in Proposition~\ref{sharpness-prop} below, which will be proved by relying on \v{S}al\'at's findings in \cite[pp.~211--212]{Salat}. Some consequences of the result will also be discussed. 

\begin{theorem}\label{S1-thm}
	Let $\psi\in\D_1\cup\D_2$, and let $\{c_n\}_n$ be a non-negative non-increasing real sequence tending to zero such that 
	\begin{eqnarray}
		\liminf_{n\to\infty}\psi(n)c_n&>&0,\quad \text{if}\ \psi\in\D_1,\label{Salat-condition2A}\\
		\liminf_{n\to\infty}\frac{\psi(n)}{\psi'(n)}c_n&>&0,\quad \text{if}\ \psi\in\D_2.\label{Salat-condition2}
	\end{eqnarray}
	If $A\subset\N$ is an infinite set such that
	\eqref{convergent-sum0} holds, then $\operatorname{d}_{\psi}(A)=0$.
\end{theorem}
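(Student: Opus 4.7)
The plan is to split the argument according to whether $\psi\in\D_1$ or $\psi\in\D_2$, since these cases call for rather different methods. For $\psi\in\D_1$, I would reduce directly to \v{S}al\'at's Theorem~\ref{Salat-thm1}. The key elementary fact is that for any concave function $\psi:(0,\infty)\to(0,\infty)$ (which automatically has $\psi(0^+)\geq 0$ since $\psi$ is positive and increasing), the quotient $\psi(x)/x$ is non-increasing on $(0,\infty)$; equivalently $x\psi'(x)\leq \psi(x)$. Consequently $\psi(n)/n\leq \psi(N_0)/N_0$ for all $n\geq N_0$, so hypothesis \eqref{Salat-condition2A} yields
\[
\liminf_{n\to\infty} nc_n \;\geq\; \frac{N_0}{\psi(N_0)}\liminf_{n\to\infty}\psi(n)c_n \;>\; 0.
\]
Theorem~\ref{Salat-thm1} then gives $\operatorname{d}(A)=0$, and the inequalities \eqref{D1-densities} force $\operatorname{d}_\psi(A)=0$.

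For $\psi\in\D_2$, I would argue by contradiction through the Cauchy criterion applied to the convergent series \eqref{convergent-sum0}. Since $\psi$ satisfies \eqref{asym} by Lemma~\ref{asymptotic-lemma}, $\overline{\operatorname{d}}_\psi(A)=\limsup_{n\to\infty} A_\psi(n)/\psi(n)$. Assume this limsup equals some $\epsilon>0$, so there is a subsequence $n_j\to\infty$ with $A_\psi(n_j)/\psi(n_j)>\epsilon/2$. Using \eqref{Salat-condition2} in the form $c_k\geq \alpha\psi'(k)/\psi(k)$ for $k\geq N_0$, together with $1/\psi(k)\geq 1/\psi(n)$ for $k\leq n$ (since $\psi$ is increasing), I would obtain the estimate
\[
\sum_{k=N}^{n}\chi_A(k) c_k \;\geq\; \frac{\alpha}{\psi(n)}\bigl(A_\psi(n)-A_\psi(N-1)\bigr),
\]
valid for $N_0\leq N\leq n$. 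Fixing $N\geq N_0$ such that $\sum_{k=N}^\infty \chi_A(k) c_k<\alpha\epsilon/8$, and then choosing $n=n_j$ so large that $A_\psi(N-1)/\psi(n_j)<\epsilon/4$, makes the right-hand side above exceed $\alpha\epsilon/4$ while the left-hand side stays below $\alpha\epsilon/8$---a contradiction. Hence $\overline{\operatorname{d}}_\psi(A)=0$, and so $\operatorname{d}_\psi(A)=0$.

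The main obstacle I anticipate is calibrating the $\D_2$ argument: one must order the quantifiers carefully (fix $N$ first via Cauchy, then $j$ so that $\psi(n_j)$ dwarfs $A_\psi(N-1)$), and the key inequality relies only on the one-sided control provided by the monotonicity of $\psi$, rather than any finer $\D_2$ property. The $\D_1$ case is, by contrast, structurally simpler once one notices that \eqref{Salat-condition2A} is genuinely a \emph{stronger} hypothesis than \v{S}al\'at's original \eqref{Salat-condition} whenever $\psi\in\D_1$, since concavity forces $\psi(n)/n$ to stay bounded.
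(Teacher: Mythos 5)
Your argument is correct, but it follows a genuinely different route from the paper's, which treats both cases by a single direct Cauchy-criterion estimate on a block $s_{n_0}-s_N$: in the concave case the paper bounds $c_k\geq c_{n_0}$ and $\psi'(k)\leq\psi'(N+1)$, and in the convex case $c_k\geq c_{n_0}$ and $\psi'(k)\leq\psi'(n_0)$, so that \eqref{Salat-condition2A} or \eqref{Salat-condition2} turns $A_\psi(n_0)>\delta'\psi(n_0)$ into a uniform lower bound violating Cauchy's condition. You instead (i) reduce the $\D_1$ case to \v{S}al\'at's Theorem~\ref{Salat-thm1}: your observation that concavity gives $x\psi'(x)\leq\psi(x)$, hence $\psi(n)\leq\psi(1)n$, so \eqref{Salat-condition2A} implies \eqref{Salat-condition}, is correct, and then $\operatorname{d}(A)=0$ forces $\operatorname{d}_\psi(A)=0$ by \eqref{D1-densities}; this is logically sound (not circular, since \v{S}al\'at's result is independently established) and shows the concave half is formally a corollary of \cite{Salat} together with \eqref{D1-densities}, whereas the paper's proof is self-contained and reproves \v{S}al\'at's theorem when $\psi(x)=x$. (ii) In the $\D_2$ case your tail argument is also correct: from $c_k\geq\alpha\psi'(k)/\psi(k)$ for $k\geq N_0$ and the monotonicity of $\psi$ you get $\sum_{k=N}^{n}\chi_A(k)c_k\geq\frac{\alpha}{\psi(n)}\bigl(A_\psi(n)-A_\psi(N-1)\bigr)$, and fixing $N$ by the Cauchy tail before sending $n=n_j\to\infty$ yields the contradiction; the quantifier order you flag is indeed the only delicate point and you handle it properly. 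Notably, your $\D_2$ argument uses neither the convexity of $\psi$ (beyond \eqref{asym}, which Lemma~\ref{asymptotic-lemma} supplies) nor the hypothesis that $\{c_n\}$ is non-increasing and tends to zero, so it in fact proves a slightly stronger statement, close in spirit to the second assertion of Theorem~\ref{Th2} but without the monotonicity of $\bigl\{c_n/\psi'(n)\bigr\}_n$; the price is that your proof leans on two quoted results ([Salat] and \eqref{D1-densities} from \cite{HL}) where the paper's is a single elementary argument reducing to the classical case.
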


\begin{proof}
	For $\psi\in\D_1\cup\D_2$, it suffices to prove that if
	\begin{equation}\label{positive-psi-density}
		\overline{\operatorname{d}}_{\psi}(A)=\limsup_{n\to\infty}\frac{A_\psi(n)}{\psi(n)}>0,
	\end{equation}
	then the series in \eqref{convergent-sum0} diverges. Hence, we suppose that \eqref{positive-psi-density} holds. Then there exists a constant $\delta'>0$ such that
	$$
	A_\psi(n)>\psi(n)\delta'
	$$
	for infinitely many values of $n\in\N$. Also recall from Lemma~\ref{asymptotic-lemma} that  \eqref{asym} holds for $\psi\in\D_1\cup\D_2$.
	
The rest of the proof is divided into two cases.
	
	\medskip
	(1) Suppose that \eqref{Salat-condition2A} holds.
	Then there exists a constant $\delta>0$ and an integer $N_1\in\N$ such that
	$$
	\psi(n)c_n\geq\delta,\quad n\geq N_1.
	$$
	Let $\varepsilon:=\frac{\delta\delta'}{2}>0$, and let $N_2\in\N$ be large enough so that
	$$
	A_\psi(N_2)\leq \sum_{k=1}^{N_2}\psi'(k)\leq 2\psi(N_2).
	$$
	Further, let $N=\max\{N_1,N_2\}$ and choose a large enough constant $n_0\in\N$ satisfying the conditions
	$$
	n_0>N,\quad A_\psi(n_0)>\psi(n_0)\delta',\quad 2\psi(N)c_{n_0}<\frac{\delta\delta'}{2}=\varepsilon.
	$$ 
	This is possible because $c_n\to 0^+$ as $n\to\infty$. Denote the partial sums of the series in \eqref{convergent-sum0} by
	$$
	s_n=\sum_{k=1}^n\chi_A(k)c_k,\quad n\in\N.
	$$
	Then, for the aforementioned values $N$ and $n_0$, we have, by concavity,
	\begin{eqnarray*}
		|s_{n_0}-s_N| &=& s_{n_0}-s_N=\sum_{k=N+1}^{n_0}\chi_A(k)c_k
		\geq c_{n_0}\sum_{k=N+1}^{n_0}\frac{\psi'(k)}{\psi'(k)}\chi_A(k)\\
		&\geq& \frac{c_{n_0}}{\psi'(N+1)}\sum_{k=N+1}^{n_0}\psi'(k)\chi_A(k)
		= \frac{c_{n_0}}{\psi'(N+1)}\big(A_\psi(n_0)-A_\psi(N)\big)\\
		&\geq& \frac{c_{n_0}}{\psi'(N+1)}\big(\psi(n_0)\delta'-2\psi(N)\big)\geq \frac{1}{\psi'(N+1)}\left(\delta\delta'-\frac{\delta\delta'}{2}\right)=\frac{\varepsilon}{\psi'(N+1)}.
	\end{eqnarray*}
	Note that the lower bound is a non-decreasing function of $N$. This means that Cauchy's condition \cite[p.~186]{Apostol} is not fulfilled, and hence the series in \eqref{convergent-sum0} diverges. 
	
	\medskip
	(2) Suppose that \eqref{Salat-condition2} holds.
	Then there exists a constant $\delta>0$ and an integer $N\in\N$ such that
$$
	\frac{\psi(n)}{\psi'(n)}c_n\geq\delta,\quad n\geq N.
	$$
	Let the constants $\varepsilon:=\frac{\delta\delta'}{2}>0$ and  the partial sum $s_n$ be as in Case~(1). Further, choose a large enough constant $n_0\in\N$ satisfying the conditions
	$$
	n_0>N,\quad A_\psi(n_0)>\psi(n_0)\delta',\quad \frac{2\psi(N)}{\psi'(n_0)}c_{n_0}<\frac{\delta\delta'}{2}=\varepsilon.
	$$ 
	This is possible because $c_n\to 0^+$ as $n\to\infty$ and because $\psi'(x)$ is non-decreasing.
	Then, for the aforementioned values $N$ and $n_0$, we have, by convexity,
	\begin{eqnarray*}
		|s_{n_0}-s_N| &=& s_{n_0}-s_N=\sum_{k=N+1}^{n_0}\chi_A(k)c_k
		\geq c_{n_0}\sum_{k=N+1}^{n_0}\frac{\psi'(k)}{\psi'(k)}\chi_A(k)\\
		&\geq& \frac{c_{n_0}}{\psi'(n_0)}\sum_{k=N+1}^{n_0}\psi'(k)\chi_A(k)
		= \frac{c_{n_0}}{\psi'(n_0)}\big(A_\psi(n_0)-A_\psi(N)\big)\\
		&\geq& \frac{c_{n_0}}{\psi'(n_0)}\big(\psi(n_0)\delta'-2\psi(N)\big)\geq \delta\delta'-\frac{\delta\delta'}{2}=\varepsilon.
	\end{eqnarray*}
	This means that Cauchy's condition is not fulfilled, and hence the series in \eqref{convergent-sum0} diverges. 	 
\end{proof}

We proceed to discuss the sharpness of Theorem~\ref{S1-thm}.
To this end, we need the obvious fact that the series $\sum_{n=1}^\infty \frac{\psi'(n)}{\psi(n)}$ diverges for $\psi\in\D_1\cup\D_2$.
If 
	\begin{equation}\label{Salat-condition2X}
	\liminf_{n\to\infty}\frac{\psi(n)}{\psi'(n)}c_n>0
	\end{equation}
holds for some $\psi\in\D_2$, then there exist constants $\gamma>0$ and $N\in\N$ such that 
	$$
	c_n\geq \gamma\frac{\psi'(n)}{\psi(n)},
	\quad n\geq N.
	$$
Consequently, \eqref{Salat-condition2X} implies $\sum_{n=1}^\infty c_n=\infty$.
Suppose then that \eqref{Salat-condition2X} is replaced with the weaker condition $\sum_{n=1}^\infty c_n=\infty$, and let $\delta\in \left( 0,1\right] $ and $A\subset\N$ be as in Proposition~\ref{Salat-example}. Then $\overline{\operatorname{d}}_\psi(A)\geq\overline{\operatorname{d}}(A)= \delta$ by \eqref{D2-densities}, which means that the conclusion of Theorem~\ref{S1-thm} fails. Similarly, if
	\begin{equation}\label{Salat-condition2Y}
	\liminf_{n\to\infty}\psi(n)c_n>0
	\end{equation}
holds for some $\psi\in\D_1$, then
	\begin{equation}\label{XY}
	\sum_{n=1}^\infty \psi'(n)c_n=\infty.
	\end{equation}
We prove that if \eqref{Salat-condition2Y} is replaced with a weaker condition \eqref{XY} for $\psi\in\D_1$, then the conclusion of Theorem~\ref{S1-thm} fails. 

\begin{proposition}\label{sharpness-prop}
For every $\delta\in (0,1)$ there exist $\psi\in\D_1$, a set $A\subset\N$ and a sequence $\{c_n\}_n$ of positive real numbers with the following properties:
	$$
	\sum_{n=1}^\infty \psi'(n)c_n=\infty,\quad \sum_{n=1}^{\infty}\chi_A(n)c_n<\infty,\quad \overline{\operatorname{d}}_\psi(A)\geq \delta.
	$$
\end{proposition}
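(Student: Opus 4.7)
The plan is to adapt the block-construction behind Proposition~\ref{Salat-example} to the $\psi$-weighted setting, placing $A$ on disjoint blocks whose $\psi'$-mass captures a prescribed fraction of $\psi$ at the right endpoints. I fix a slack parameter $\delta'\in(\delta,1)$ and, for concreteness, take $\psi(x)=\sqrt{x}\in\D_1$. I build $A=\bigcup_k I_k$ with $I_k=\{a_k+1,\ldots,b_k\}$, where $\{b_k\}$ is a rapidly increasing sequence of positive integers and $a_k$ is the largest positive integer with $\psi(a_k+1)\leq(1-\delta')\psi(b_k)$. Choosing $b_{k+1}$ so that $\psi(b_{k+1})>\psi(b_k+1)/(1-\delta')$ forces $a_{k+1}>b_k$, making the blocks disjoint; the resulting gaps $(b_k,a_{k+1}]\cap\N$ ensure that $\N\setminus A$ is infinite.

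The density lower bound then follows from concavity in one line. Since $\psi'$ is non-increasing, $\psi'(j)\geq\psi(j+1)-\psi(j)$ for each $j\in\N$, and telescoping gives
\begin{equation*}
A_\psi(b_k)\geq\sum_{j=a_k+1}^{b_k}\psi'(j)\geq\psi(b_k+1)-\psi(a_k+1)\geq\psi(b_k)-(1-\delta')\psi(b_k)=\delta'\psi(b_k),
\end{equation*}
so that $\overline{\operatorname{d}}_{\psi}(A)\geq\delta'>\delta$.

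For $\{c_n\}$ I would use two separate prescriptions: set $c_n=2^{-k}/|I_k|$ for $n\in I_k$, and $c_n=1/\psi'(n)$ for $n\notin A$. Then $\sum_{n}\chi_A(n)c_n=\sum_k 2^{-k}=1<\infty$, while $\sum_n\psi'(n)c_n\geq\sum_{n\notin A}\psi'(n)\cdot\frac{1}{\psi'(n)}=|\N\setminus A|=\infty$. All three conclusions of the proposition then hold.

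The main obstacle I anticipate is purely bookkeeping: one must simultaneously verify that the $a_k,b_k$ are well-defined positive integers, that $a_{k+1}>b_k$, and that the concavity-based inequality survives the integer rounding in the definition of $a_k$. Introducing the slack $\delta'>\delta$ absorbs the small loss coming from $\psi(a_k+1)$ versus $\psi(a_k)$, and letting $\{b_k\}$ grow fast — say by defining it recursively through $\psi(b_{k+1})\geq 2\psi(b_k+1)/(1-\delta')$ — guarantees that every intermediate inequality holds from some index onward.
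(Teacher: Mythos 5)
Your block construction and the concavity estimate giving $A_\psi(b_k)\geq\psi(b_k+1)-\psi(a_k+1)\geq\delta'\psi(b_k)$ are fine, and the three displayed properties, read literally, are indeed satisfied by your choices. The problem is your prescription of $\{c_n\}$ off $A$. Proposition~\ref{sharpness-prop} exists solely to show that in Theorem~\ref{S1-thm} the hypothesis \eqref{Salat-condition2A} cannot be weakened to \eqref{XY}; the sequence must therefore remain in the class to which Theorem~\ref{S1-thm} applies, i.e.\ non-negative, non-increasing and tending to zero (this is why the paper's proof reuses, verbatim, the non-increasing null sequence of Proposition~\ref{Salat-example}). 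Your sequence takes the value $c_n=1/\psi'(n)=2\sqrt{n}$ on $\N\setminus A$ and $2^{-k}/|I_k|$ on $I_k$: it is unbounded, wildly non-monotone, and does not tend to zero, so Theorem~\ref{S1-thm} never applied to it in the first place and the example says nothing about the necessity of \eqref{Salat-condition2A}. Worse, once arbitrary positive $c_n$ are allowed, the proposition becomes vacuous — exactly as your two-line verification shows, the divergence $\sum_n\psi'(n)c_n=\infty$ is manufactured entirely from the complement of $A$ by cancelling $\psi'(n)$ against $1/\psi'(n)$, decoupled from the interplay between $\psi$, $A$ and the convergent sub-series that the sharpness statement is meant to exhibit.

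By contrast, the paper keeps the fixed non-increasing sequence $c_{n^n+k(n)}=n^{-(n+2)}$ and the set $A$ of Proposition~\ref{Salat-example}, and the whole effort goes into choosing $\psi(x)=x/\log\log(x+e^e)\in\D_1$ — deliberately close to linear — and verifying that for this single monotone sequence one has $\sum_n\psi'(n)c_n=\infty$, $\sum_n\chi_A(n)c_n<\infty$ and $\overline{\operatorname{d}}_\psi(A)\geq\delta$ simultaneously. That near-linearity is not cosmetic: with your $\psi(x)=\sqrt{x}$ and a genuinely non-increasing null sequence, the weighted divergence $\sum c_n/(2\sqrt{n})=\infty$ forces so much mass of $c_n$ onto the long blocks carrying the $\psi$-density that making $\sum_{n\in A}c_n$ converge is no longer a matter of bookkeeping, and your construction offers no mechanism for it. So the proposal proves only a trivialized reading of the statement and misses the content the proposition is there to deliver; to repair it you would need to impose monotonicity on $\{c_n\}$ from the start and then confront the same tension the paper resolves with its choice of $\psi$.
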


 The proof of Proposition~\ref{sharpness-prop} is given in Appendix~\ref{App-A} along with the alternative proof of Proposition~\ref{Salat-example} since these two proofs are parallel. 
 We proceed to present a result in the spirit of Theorem~\ref{S1-thm}, which is valid under a milder  assumption  $\psi\in\D$.

\begin{theorem}\label{Th2}
		Let $\psi\in \D$, and let $\{c_n\}_n$ be a non-negative real sequence such that $\left\{\frac{c_n}{\psi'(n)}\right\}_n$ is non-increasing and tending to zero. 	If $A\subset\N$ is an infinite set such that
		\eqref{convergent-sum0} holds, then
			\begin{eqnarray}\label{NC1}
			 \frac{A_{\psi}(n)}{\psi'(n)}c_n\to 0,\quad n\to \infty.
		\end{eqnarray}
In addition, if $\psi$ satisfies \eqref{asym} and
		\begin{eqnarray}\label{Suf1}
			\liminf_{n\to\infty}\frac{\psi(n)}{\psi'(n)}c_n>0,
		\end{eqnarray}
		then $\operatorname{d}_{\psi}(A)=0$. 
\end{theorem}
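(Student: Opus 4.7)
The plan is to view $\sum_{n=1}^\infty \chi_A(n)c_n$ as the series $\sum_{n=1}^\infty \chi_A(n)\psi'(n) b_n$ with $b_n:=c_n/\psi'(n)$, so that the hypothesis becomes: $\{b_n\}_n$ is non-increasing and tends to $0$, and $\sum_{n=1}^\infty \chi_A(n)\psi'(n)b_n<\infty$. The first conclusion to prove, namely that $A_\psi(n)b_n\to 0$, is then a close analogue of Olivier's theorem (``if $\sum a_n$ converges with $a_n\downarrow$, then $na_n\to 0$''), so I would mimic Olivier's classical Cauchy-criterion argument with $n$ replaced by $A_\psi(n)$.

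More precisely, fix $\varepsilon>0$. By Cauchy's criterion applied to the convergent series in \eqref{convergent-sum0}, there exists $N\in\N$ such that for every $m\geq N$,
\begin{equation*}
\sum_{k=N+1}^{m}\chi_A(k)c_k<\varepsilon.
\end{equation*}
Since $\{b_n\}_n$ is non-increasing, $b_k\geq b_m$ for $N+1\leq k\leq m$, and hence
\begin{equation*}
\varepsilon>\sum_{k=N+1}^{m}\chi_A(k)\psi'(k)b_k\geq b_m\sum_{k=N+1}^{m}\chi_A(k)\psi'(k)=b_m\bigl(A_\psi(m)-A_\psi(N)\bigr).
\end{equation*}
Rearranging gives $A_\psi(m)b_m\leq \varepsilon+A_\psi(N)b_m$. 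Since $A_\psi(N)$ is fixed and $b_m\to 0$, the right-hand side is less than $2\varepsilon$ for all sufficiently large $m$, which yields $A_\psi(n)b_n\to 0$, that is, the first assertion \eqref{NC1}.

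For the second assertion I simply divide. Writing
\begin{equation*}
\frac{A_\psi(n)}{\psi(n)}=\frac{A_\psi(n)\,c_n/\psi'(n)}{\psi(n)c_n/\psi'(n)},
\end{equation*}
the numerator tends to $0$ by \eqref{NC1}, while the denominator has positive liminf by \eqref{Suf1} and is therefore bounded away from $0$ for all large $n$. Consequently $A_\psi(n)/\psi(n)\to 0$, and because $\psi$ satisfies \eqref{asym}, this is precisely the statement that the asymptotic $\psi$-density exists and equals zero, i.e.\ $\operatorname{d}_\psi(A)=0$.

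There is essentially no obstacle beyond correctly identifying the right Abel/Olivier-style argument: the only non-trivial point is to use the monotonicity of the ratio $c_n/\psi'(n)$ (rather than of $\{c_n\}_n$ itself) when lower-bounding the tail, which is exactly what makes the weights $\psi'(k)$ accumulate into $A_\psi(m)-A_\psi(N)$ and lets the ``weighted Olivier'' mechanism go through.
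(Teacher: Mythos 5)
Your proof is correct, but for the first assertion you take a genuinely different route from the paper. You prove \eqref{NC1} by a direct Olivier-style argument: apply the Cauchy criterion (equivalently, smallness of the tail) to the convergent series \eqref{convergent-sum0}, use the monotonicity of $b_k=c_k/\psi'(k)$ to bound a partial tail from below by $b_m\bigl(A_\psi(m)-A_\psi(N)\bigr)$, and then absorb the fixed term $A_\psi(N)b_m$ using the hypothesis $b_m\to 0$ (which is indeed needed here, since it is not automatic when the sum runs only over $A$). The paper instead writes the partial sums via Abel's partial summation formula, $\sum_{k=1}^n\chi_A(k)c_k=c_n\frac{A_\psi(n)}{\psi'(n)}+\sum_{k=1}^{n-1}v_k$ with $v_k=A_\psi(k)\bigl(\frac{c_k}{\psi'(k)}-\frac{c_{k+1}}{\psi'(k+1)}\bigr)\ge 0$, deduces that $\sum_k v_k$ converges, and then bounds $\frac{c_n}{\psi'(n)}\le\frac{1}{A_\psi(n)}\sum_{k\ge n}v_k$ to conclude. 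Your argument is shorter and more elementary; the paper's Abel-summation identity has the advantage of being structurally parallel to the identity used later for sub-signed series (Lemma~\ref{Lem1} and Theorem~\ref{Th3}), where your tail estimate would not carry over because the summands are no longer non-negative. For the second assertion your argument is the same as the paper's: dividing by $\psi(n)c_n/\psi'(n)$ is exactly the paper's Lemma~\ref{lem3} with $a_n=\frac{\psi(n)}{\psi'(n)}c_n$ and $b_n=\frac{A_\psi(n)}{\psi(n)}$ (note that \eqref{Suf1} guarantees the denominator is bounded away from zero, hence nonzero, for all large $n$, so the division is legitimate), and \eqref{asym} converts $A_\psi(n)/\psi(n)\to 0$ into $\operatorname{d}_\psi(A)=0$.
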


The proof of Theorem~\ref{Th2} requires the following elementary lemma.

\begin{lemma}\label{lem3}
	Let $\{a_n\}_n$ and $\{b_n\}_n$ be two real sequences such that $\lim\limits_{n\to \infty}a_nb_n=0$. If $\liminf\limits_{n\to \infty} a_n > 0$, then $\lim\limits_{n\to \infty}b_n = 0$.
\end{lemma}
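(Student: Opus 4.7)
The plan is to exploit the positivity encoded in $\liminf_{n\to\infty} a_n > 0$ in order to dominate $|b_n|$ by the null sequence $|a_n b_n|$. First I would set $\alpha := \liminf_{n\to\infty} a_n$, which is strictly positive by hypothesis. By the very definition of the limit inferior, applied with the margin $\alpha/2 > 0$, there exists an index $N_0 \in \mathbb{N}$ such that
\[
a_n \geq \frac{\alpha}{2} > 0 \quad \text{for every } n \geq N_0.
\]
This gives a uniform positive lower bound on $a_n$ for all sufficiently large $n$, and in particular guarantees that $a_n$ is nonzero and of a known sign from $N_0$ onwards, so that dividing by $a_n$ is legitimate.

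The second step is a short algebraic manipulation combined with the standard squeeze. For $n \geq N_0$, the inequality $a_n \geq \alpha/2$ yields
\[
|b_n| \;=\; \frac{|a_n b_n|}{a_n} \;\leq\; \frac{2}{\alpha}\,|a_n b_n|.
\]
Given an arbitrary $\varepsilon > 0$, the hypothesis $a_n b_n \to 0$ produces an index $N_1$ such that $|a_n b_n| < \varepsilon \alpha / 2$ for all $n \geq N_1$. Setting $N := \max\{N_0, N_1\}$, the displayed bound gives $|b_n| < \varepsilon$ for $n \geq N$, whence $b_n \to 0$.

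There is essentially no obstacle in this argument; the only subtlety is the conceptual one of passing from $\liminf a_n > 0$ to an \emph{eventual} lower bound on $a_n$, for which using $\alpha/2$ rather than $\alpha$ itself is the customary safeguard against equality cases in the definition of $\liminf$. No additional assumption on the sign of $b_n$ or on $\limsup a_n$ is needed, which explains why the lemma is a convenient tool for deducing \eqref{NC1}-type conclusions from quantitative bounds of the form \eqref{Suf1}.
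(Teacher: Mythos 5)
Your proof is correct and follows essentially the same route as the paper: both extract an eventual positive lower bound on $a_n$ from $\liminf_{n\to\infty} a_n>0$ and then bound $|b_n|$ by $|a_nb_n|$ divided by that constant, concluding by a squeeze with arbitrary $\varepsilon$. The only cosmetic difference is that you fix the explicit margin $\alpha/2$, whereas the paper works with an unspecified $\delta>0$; the arguments are otherwise identical.
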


\begin{proof}
Given $\lim\limits_{n\to \infty} a_nb_n = 0$, let $\varepsilon > 0$ be arbitrary. By the definition of limit, there exists an integer $N_1$ such that 
	$$
	|a_n b_n|< \varepsilon, \quad \text{for all}\;n\geq N_1.
	$$
Now, since $\liminf\limits_{n\to \infty} a_n > 0$, there exists a real number $\delta >0$ and an integer $N_2$ such that 
	$$
	|a_n|\geq \delta, \quad \text{for all}\; n\geq N_2.
	$$
Thus, for any $n \geq \max \{N_1, N_2\}$, we have
	$$
	|b_n|=\frac{|a_nb_n|}{|a_n|}\leq \frac{\varepsilon}{\delta}.
	$$
This inequality holds for any $\varepsilon > 0$, which implies that $\lim\limits_{n\to \infty}b_n = 0$.
\end{proof}
	\begin{proof}[Proof of Theorem~\ref{Th2}]
	 By Abel's partial summation formula \cite[p.~194]{Apostol}, we obtain 
		\begin{eqnarray}\label{Id1}
			\sum_{k=1}^n\chi_A(k)c_k=c_n\frac{A_{\psi}(n)}{\psi'(n)}+\sum_{k=1}^{n-1}v_k,
		\end{eqnarray}
		where $\{v_n\}_n$ is non-negative real sequence defined by
		$$
		v_n=A_{\psi}(n)\left( \frac{c_n}{\psi'(n)}-	\frac{c_{n+1}}{\psi'(n+1)}\right).
		$$
From \eqref{Id1} and \eqref{convergent-sum0}, we deduce that the series $\sum_{n=1}^{\infty} v_n$ converges. Since $\left\{\frac{c_n}{\psi'(n)}\right\}_n$ tends to zero, we have
		\begin{eqnarray*}
			\frac{c_n}{\psi'(n)}=\sum_{k\geq n} \frac{v_k}{A_{\psi}(k)} \leq \frac{1}{A_{\psi}(n)}\sum_{k\geq n} v_k.
		\end{eqnarray*}
	This, combined with the convergence of $\sum_{n=1}^{\infty} v_n$, implies that the remainder $\sum_{k\geq n} v_k$ converges to $0$ as $n\to \infty$, and the assertion \eqref{NC1} follows immediately. 
	
Finally, given that \eqref{Suf1} holds, we may use Lemma~\ref{lem3}, with
	$$
	a_n=\frac{\psi(n)}{\psi'(n)}c_n \quad \text{and}\quad b_n=\frac{A_{\psi}(n)}{\psi(n)},
	$$
to conclude that $\operatorname{d}_{\psi}(A) = 0$. 
\end{proof}

\begin{remark}
	(1)	The assertion \eqref{NC1} is necessary for \eqref{convergent-sum0} but not sufficient. Indeed, if $\Pi\subset\N$ denotes the set of prime numbers, then the series $\sum\limits_{p\in \Pi}\frac{1}{p}$ diverges. Nevertheless, \eqref{NC1} is satisfied for $\psi(x)=x$ since we have $c_n=\frac{1}{n}$, and we obtain
	$$
	\operatorname{d}(\Pi)=\lim_{n\to \infty}\frac{\Pi(n)}{n}=\lim_{n\to \infty} \frac{1}{\log n}= 0.
	$$
Here, $\Pi(n)$ denotes the prime-counting function, known from the prime number theorem to have an asymptotic behavior given by $\Pi(n)\sim \frac{n}{\log n}$.

(2) Assertion (a) in \cite[Theorem A]{GM} follows as a special case of Theorem~\ref{Th2} by choosing $\psi(x)=x$. However, our method of proof is different from that in \cite{GM}.
\end{remark}

\begin{example}
Recall from \cite[Remark~4.4]{HL} that the maximal growth rate for a function $\psi\in\D_2$ is $\log \psi(x)=o(x)$. There are functions $\psi\in\D_2$ of near maximal growth illustrating the validity of \eqref{Suf1} as a criterion for obtaining $\operatorname{d}_{\psi}(A)=0$ when \eqref{convergent-sum0} is true.  For example, let $A\subset\N$ be an infinite set such that 
	$$
	\sum\limits_{n\in A}\frac{1}{n^{\alpha}} <\infty, \quad 0<\alpha <1.
	$$
Choose $\psi(x)=\exp(x^{\delta})$ for $0<\delta\leq 1-\alpha$, so that $\psi\in\D_2$. Then \eqref{Suf1} is satisfied, and consequently, $\operatorname{d}_{\psi}(A)=0$.
\end{example}

The classical Olivier's theorem \cite[p.~34]{O}  asserts that for a non-increasing positive real sequence $\{c_n\}_n$, if $\sum_{n=1}^{\infty}c_n$ converges, then $nc_n\to 0$ as $n\to \infty$. Olivier's theorem is also known as  Abel's theorem or Pringsheim's theorem \cite[p.~350]{Hardy}.
Theorem~\ref{Th2} can be seen as an extension of Olivier's theorem. Indeed, by choosing $A=\N$, we see that \eqref{convergent-sum0} is equivalent to the convergence of the series $\sum_{n=1}^{\infty}c_n$, which consequently implies $c_n\to 0$ as $n\to\infty$. Furthermore, with $\psi(x)=x$, \eqref{NC1} can be expressed as
	$$
	A(n)c_n=nc_n\to 0, \quad n\to \infty.
	$$

The following result is another extension of Olivier's theorem.

\begin{corollary}\label{Cor1}
Suppose that $\psi\in\D$ satisfies \eqref{asym}, and let $\{c_n\}_n$ be a non-negative real sequence such that $\left\{\frac{c_n}{\psi'(n)}\right\}_n$ is non-increasing. If $\sum_{n=1}^{\infty}c_n$ converges, then
	\begin{equation}\label{limit0}
		\lim_{n\to \infty}\frac{\psi(n)}{\psi'(n)}c_n=0.
	\end{equation}
In particular, if $\left\{\frac{nc_n}{(\log n)^{\alpha-1}}\right\}_{n\geq 2}$ is non-increasing for some $\alpha\geq 1$, then
	\begin{eqnarray}\label{limit}
	\lim _{n\to \infty}n(\log n)c_n=0.
	\end{eqnarray}
\end{corollary}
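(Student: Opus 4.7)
My strategy is to derive \eqref{limit0} as a direct specialization of Theorem~\ref{Th2} with $A=\N$, and then to obtain \eqref{limit} by choosing a $\psi\in\D$ whose derivative matches the weight $(\log n)^{\alpha-1}/n$ appearing in the second hypothesis.

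For the first assertion, I would first verify that $\{c_n/\psi'(n)\}$ automatically tends to zero under the stated hypotheses: being non-negative and non-increasing, it has a limit $L\ge 0$, and $L>0$ would give $c_n\ge \tfrac{L}{2}\psi'(n)$ for large $n$, forcing $\sum c_n\ge \tfrac{L}{2}\sum\psi'(n)=\infty$ via \eqref{asym} and the unboundedness of $\psi$, which contradicts the convergence of $\sum c_n$. Thus Theorem~\ref{Th2} applies with $A=\N$, in which case $\chi_A\equiv 1$ and \eqref{convergent-sum0} is exactly the given convergence of $\sum c_n$. The conclusion \eqref{NC1} reads
\[
\frac{\sum_{k=1}^{n}\psi'(k)}{\psi'(n)}\,c_n\longrightarrow 0,
\]
and replacing $\sum_{k=1}^{n}\psi'(k)$ by its asymptotic equivalent $\psi(n)$ from \eqref{asym} converts this into \eqref{limit0}.

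For the second assertion I would build $\psi\in\D$ so that $\psi'(n)=(\log n)^{\alpha-1}/n$ for every $n\ge 2$; a concrete choice is $\psi(x)=(\log x)^{\alpha}/\alpha+1$ on $[2,\infty)$, extended to $(0,2]$ by its tangent line at $x=2$, which produces a positive, strictly increasing, differentiable, unbounded function on $(0,\infty)$. A standard integral comparison gives $\sum_{k=2}^{n}(\log k)^{\alpha-1}/k\sim (\log n)^{\alpha}/\alpha\sim \psi(n)$, so that $\psi\in\D$ and \eqref{asym} holds. With this $\psi$, the assumption that $\{nc_n/(\log n)^{\alpha-1}\}_{n\ge 2}$ is non-increasing is exactly the non-increase of $\{c_n/\psi'(n)\}_{n\ge 2}$, and
\[
\frac{\psi(n)}{\psi'(n)}\,c_n\;\sim\;\frac{n\log n}{\alpha}\,c_n,
\]
so \eqref{limit0} immediately yields \eqref{limit}.

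The only delicate step is this ad hoc construction of $\psi$: one needs $\psi$ to be globally positive, smooth, strictly increasing, and to have the prescribed derivative values at every integer $n\ge 2$. Neither this construction nor the harmless loss of monotonicity of $\{c_n/\psi'(n)\}$ at $n=1$ (Theorem~\ref{Th2} applies after discarding finitely many initial terms) is conceptually difficult; the essential content of the proof is the appeal to Theorem~\ref{Th2} combined with the asymptotic \eqref{asym}.
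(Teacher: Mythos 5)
Your proposal is correct and follows essentially the same route as the paper: first show $\{c_n/\psi'(n)\}_n$ tends to zero by a contradiction argument using \eqref{asym}, then specialize Theorem~\ref{Th2} to $A=\N$ and use \eqref{asym} to get \eqref{limit0}, and finally choose $\psi(x)$ proportional to $(\log x)^{\alpha}$ to deduce \eqref{limit}. Your extra care in extending $\psi$ below $x=2$ so that it genuinely lies in $\D$, and in handling the monotonicity only for $n\ge 2$, merely tidies up details the paper passes over with its bare choice $\psi(x)=(\log x)^{\alpha}$.
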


\begin{proof}
To begin with, we show that the sequence $\left\{\frac{c_n}{\psi'(n)}\right\}_n$ converges to zero. Assuming the contrary, and keeping in mind that $\left\{\frac{c_n}{\psi'(n)}\right\}_n$ is non-increasing, we infer the existence of $\delta > 0$ such that
	$$
	c_n\geq \delta \psi'(n).
	$$
Consequently, we have
	$$
	\sum_{k=1}^{n} c_k\geq \delta \sum_{k=1}^{n}\psi'(k)\sim \delta \psi(n),
	$$
which contradicts with the convergence of  $\sum_{n=1}^{\infty}c_n$. 

To establish \eqref{limit0}, it suffices to set $A=\N$ in \eqref{NC1} and to use \eqref{asym}.
Finally, to prove \eqref{limit}, we make the specific choice $\psi(x)=(\log x)^\alpha$, and then apply \eqref{limit0}.
\end{proof}

The case $\alpha=1$ in Corollary~\ref{Cor1}, i.e., $\{na_n\}_n$ is non-increasing, is observed in \cite[Remark~1]{NP} by using an estimate on harmonic numbers.

%%%%%%%%%%%%%%%%%%%%%%%%%
%%
%% SECTION 4
%%
%%%%%%%%%%%%%%%%%%%%%%%%%

\section{Generalization of Hamming's theorem:\\ Research questions (Q2) and (Q3) }

We begin this section by generalizing the first implication of Theorem~\ref{New-Hamm} from $\psi(x) = x$ to $\psi \in \mathcal{D}_1$. For the sake of brevity, for any $\psi \in \mathcal{D}_1$, we denote by $\mathcal{I}_{\psi}$ the class of non-negative real sequences $\{c_n\}_n$ satisfying \eqref{Salat-condition30} for which $\left\{\frac{c_n}{\psi'(n)}\right\}_n$ is non-increasing. Note that if $\psi(x) = x$, then $\mathcal{I}_\psi$ reduces to $\mathcal{I}$.

\begin{theorem}\label{psi-Hamming}
Let $\psi\in \D_1$, and let $A \subset \N$ be an infinite set. If $\underline{\operatorname{d}}_\psi(A)=0$, then there exists a positive sequence $\{c_n\}_n \in \mathcal{I}_{\psi}$ such that \eqref{convergent-sum0} holds.
\end{theorem}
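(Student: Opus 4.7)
The plan is to adapt Hamming's blockwise construction to the $\psi$-weighted setting. Using $\underline{\operatorname{d}}_\psi(A) = 0$ and the unboundedness of $\psi$, I would inductively select a strictly increasing sequence of positive integers $n_0 < n_1 < n_2 < \cdots$ satisfying both
\begin{enumerate}
\item[(i)] $A_\psi(n_k) \leq \psi(n_k)/2^k$,
\item[(ii)] $\psi(n_k) \geq 8\,\psi(n_{k-1})$.
\end{enumerate}
Condition (i) comes directly from $\liminf_{n} A_\psi(n)/\psi(n) = 0$, while (ii) can be enforced within the infinite set of $n$ satisfying (i) because $\psi$ is unbounded and strictly increasing. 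The candidate sequence is then defined blockwise by
$$
c_n := \frac{\psi'(n)}{\psi(n_k)}, \qquad n_{k-1} < n \leq n_k, \ k \geq 1,
$$
together with any admissible positive extension for $n \leq n_0$ (for instance $c_n := \psi'(n)/\psi(n_1)$). Positivity is immediate from $\psi' > 0$, and $c_n/\psi'(n) = 1/\psi(n_k)$ is constant on each block and strictly decreasing across blocks, so $\{c_n/\psi'(n)\}_n$ is non-increasing as required by the definition of $\mathcal{I}_\psi$.

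For the convergence of the sub-series, I would bound it block by block: the $k$-th block contributes at most $A_\psi(n_k)/\psi(n_k) \leq 2^{-k}$ by (i), so $\sum_n \chi_A(n) c_n$ is dominated by a geometric series and converges. For the divergence of $\sum_n c_n$, Lemma~\ref{asymptotic-lemma} furnishes $\sum_{n=1}^N \psi'(n) \sim \psi(N)$, so for $k$ large enough,
$$
\sum_{n=n_{k-1}+1}^{n_k} \psi'(n) \geq \tfrac{1}{2}\psi(n_k) - 2\psi(n_{k-1}) \geq \tfrac{1}{4}\psi(n_k),
$$
where the last step uses (ii). Dividing by $\psi(n_k)$ gives $\sum_{n_{k-1} < n \leq n_k} c_n \geq 1/4$, whence $\sum_n c_n = \infty$. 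Together with positivity, this places $\{c_n\}_n$ in $\mathcal{I}_\psi$ and establishes \eqref{convergent-sum0}.

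The main subtlety lies in the simultaneous selection of $\{n_k\}$. Condition (i) alone, which is what the $\liminf$ hypothesis furnishes directly, is insufficient: without the multiplicative gap (ii), the block sums $\sum_{n_{k-1} < n \leq n_k} c_n$ can be arbitrarily small, and one loses control over the divergence of $\sum_n c_n$. Arranging (ii) inside the infinite set supplied by (i) is the precise step where the density hypothesis couples to the construction. Once this twin control is in place, the remaining estimates form the $\psi$-weighted analogue of Hamming's classical argument, with Lemma~\ref{asymptotic-lemma} providing the bookkeeping asymptotic that, for $\psi(x) = x$, was simply the trivial identity $\sum_{k=1}^n 1 = n$.
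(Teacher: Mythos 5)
Your construction is correct, and it reaches the conclusion by a genuinely more direct route than the paper. The paper first rewrites $A=\{\varphi(n)\}_n$ and invokes Theorem~\ref{lem1} to recast $\underline{\operatorname{d}}_\psi(A)=0$ as \eqref{LS3}, then proves Lemma~\ref{lem4} (a telescoping/contradiction argument) to extract block endpoints along which the \emph{increments} satisfy $\frac{\psi(\varphi(n_{k+1}))-\psi(\varphi(n_k))}{A_\psi(\varphi(n_{k+1}))-A_\psi(\varphi(n_k))}\ge 2^k$ together with the monotonicity condition \eqref{Hyp2}; it then sets $c_m$ proportional to $\psi'(m)$ normalized by $2^k\bigl(A_\psi(\varphi(n_{k+1}))-A_\psi(\varphi(n_k))\bigr)$, and needs the interpolating constants $B(m)$ of \eqref{B-constants2} to glue the monotonicity of $c_m/\psi'(m)$ across block boundaries, plus Lemma~\ref{lem2} for the divergence estimate. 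You instead pick the block endpoints directly from the $\liminf$ hypothesis, at points where the \emph{global} ratio satisfies $A_\psi(n_k)\le 2^{-k}\psi(n_k)$, add the geometric gap $\psi(n_k)\ge 8\psi(n_{k-1})$, and normalize by $\psi(n_k)$; then the sub-series bound uses the full $A_\psi(n_k)$ rather than block increments, the monotonicity of $c_n/\psi'(n)$ is automatic (piecewise constant, decreasing across blocks), and divergence follows from \eqref{asym} (Lemma~\ref{asymptotic-lemma}) plus the gap. This bypasses Theorem~\ref{lem1}, Lemma~\ref{lem4} and the $B(m)$ bookkeeping entirely; what the paper's increment-based normalization buys is independence from any growth gap in $\psi(n_k)$ and a visibly larger family of admissible sequences (via the free constants $B(m)$), and it stays closer to Hamming's original argument. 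Two small points you should make explicit: for $\psi\in\D_1$ one has $\psi'(n)>0$ for every $n$ (if $\psi'$ vanished somewhere, being non-increasing and non-negative it would vanish identically from that point on, contradicting unboundedness), which is what justifies positivity of your $c_n$; and the identification of $\underline{\operatorname{d}}_\psi(A)=0$ with $\liminf_n A_\psi(n)/\psi(n)=0$ uses \eqref{asym}, again supplied by Lemma~\ref{asymptotic-lemma} for $\psi\in\D_1$. With those remarks added, your argument is complete.
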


Before we proceed to the proof of Theorem~\ref{psi-Hamming}, we can, without loss of generality, express the set $A$ in Theorem~\ref{psi-Hamming} as $A = \{\varphi(n) : n \in \mathbb{N}\}$, where $\{\varphi(n)\}_n$ is an arbitrary strictly increasing sequence of non-negative integers. If $\psi \in \mathcal{D}_1$, then Theorem~\ref{lem1} gives
	$$
	\underline{\operatorname{d}}_\psi(A) =
\liminf_{n \to \infty} \frac{A_\psi(\varphi(n))}{\psi(\varphi(n))},
	$$
where
	$$
	A_{\psi}(\varphi(n)) = \sum_{k=1}^{\varphi(n)}\psi'(k)\chi_A(k) = \sum_{k=1}^n \psi'(\varphi(k)).
	$$
In particular, \(\underline{\operatorname{d}}_\psi(A) = 0\) if and only if
\begin{equation}\label{LS3}
	\limsup_{n \to \infty} \frac{\psi(\varphi(n))}{A_\psi(\varphi(n))} = \infty.
\end{equation}

Using the above notation, we present the following two elementary lemmas, which are essential for the proof.

\begin{lemma}\label{lem4}
Under the assumptions of Theorem~\ref{psi-Hamming}, there exists a strictly increasing sequence $\{n_k\}_k$ of positive integers tending to infinity and satisfying
	\begin{eqnarray}\label{Hyp1}
		\frac{\psi(\varphi(n_{k+1}))-\psi(\varphi(n_k))}{A_{\psi}(\varphi(n_{k+1}))-A_{\psi}(\varphi(n_k))}\geq 2^k
	\end{eqnarray}
	and 
	\begin{eqnarray}\label{Hyp2}
		A_{\psi}(\varphi(n_{k+1}))-A_{\psi}(\varphi(n_k))\geq A_{\psi}(\varphi(n_{k}))-A_{\psi}(\varphi(n_{k-1})).
	\end{eqnarray}
\end{lemma}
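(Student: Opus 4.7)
The plan is to build $\{n_k\}_k$ by induction, using the reformulation \eqref{LS3} together with the fact that the relevant weighted counting function tends to infinity.

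For brevity, set $\alpha_n:=\psi(\varphi(n))$ and $\beta_n:=A_{\psi}(\varphi(n))=\sum_{j=1}^{n}\psi'(\varphi(j))$, so that \eqref{LS3} reads $\limsup_{n\to\infty}\alpha_n/\beta_n=\infty$. Since $A$ is infinite and $\psi'>0$, the sequence $\{\beta_n\}_n$ is strictly increasing, and I will use that $\beta_n\to\infty$. Granting this, I have two ingredients: (a) for every constant $M$, infinitely many $n$ satisfy $\alpha_n/\beta_n\geq M$, and (b) for every constant $M$, all sufficiently large $n$ satisfy $\beta_n\geq M$.

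For the inductive step, set $n_0:=0$ with the convention $\alpha_0=\beta_0=0$ and pick $n_1:=1$. Having chosen $n_1<\cdots<n_k$, I pick $n_{k+1}>n_k$ large enough that
\begin{equation*}
\frac{\alpha_{n_{k+1}}}{\beta_{n_{k+1}}}\geq 2^{k+1},\qquad \beta_{n_{k+1}}\geq \frac{\alpha_{n_k}}{2^k},\qquad \beta_{n_{k+1}}\geq \beta_{n_k}+(\beta_{n_k}-\beta_{n_{k-1}})
\end{equation*}
all hold simultaneously; this is possible because (a) makes the first inequality true for infinitely many $n$ while (b) makes the latter two true for all sufficiently large $n$, and the intersection of such sets is still infinite.

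Then \eqref{Hyp2} is exactly the third inequality in the display above, and \eqref{Hyp1} follows by combining the first two with $\beta_{n_{k+1}}-\beta_{n_k}\leq\beta_{n_{k+1}}$:
\begin{equation*}
\frac{\alpha_{n_{k+1}}-\alpha_{n_k}}{\beta_{n_{k+1}}-\beta_{n_k}}
\geq \frac{\alpha_{n_{k+1}}-\alpha_{n_k}}{\beta_{n_{k+1}}}
\geq 2^{k+1}-\frac{\alpha_{n_k}}{\beta_{n_{k+1}}}
\geq 2^{k+1}-2^k=2^k.
\end{equation*}
The main obstacle I anticipate is not the induction itself but justifying $\beta_n\to\infty$ from the stated hypotheses, since bounded $\{\beta_n\}$ would be incompatible with strictly positive non-decreasing increments required by \eqref{Hyp2}; I would resolve this by exploiting concavity of $\psi\in\D_1$ together with $\underline{\operatorname{d}}_\psi(A)=0$.
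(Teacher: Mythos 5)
Your induction itself is sound: granting that $\beta_n:=A_{\psi}(\varphi(n))\to\infty$, the three requirements you impose on $n_{k+1}$ are simultaneously satisfiable, \eqref{Hyp2} is your third requirement verbatim, and the chain of estimates yielding \eqref{Hyp1} is correct. The genuine gap is exactly the point you defer to the last sentence, and it cannot be repaired in the way you suggest: concavity of $\psi\in\D_1$ together with $\underline{\operatorname{d}}_\psi(A)=0$ does \emph{not} imply $\beta_n\to\infty$. Take $\psi(x)=\sqrt{x}\in\D_1$ and $A=\{n^4:n\in\N\}$; then $\sum_{n\in A}\psi'(n)=\sum_{n}\tfrac{1}{2}n^{-2}<\infty$, while $A$ is infinite and $\operatorname{d}_\psi(A)=0$ (the numerator $A_\psi(n)$ stays bounded while $\psi(n)\to\infty$), so all hypotheses of Theorem~\ref{psi-Hamming} hold. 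In this situation no sequence $\{n_k\}_k$ at all satisfies \eqref{Hyp2}: the increments $A_{\psi}(\varphi(n_{k+1}))-A_{\psi}(\varphi(n_k))$ are strictly positive, and \eqref{Hyp2} forces them to stay above the first increment, whence $A_{\psi}(\varphi(n_k))\to\infty$, contradicting boundedness. So the missing ingredient $\sum_{n\in A}\psi'(n)=\infty$ is not a technicality derivable from the stated hypotheses; it is an additional assumption under which alone the lemma can hold, and your argument is complete precisely in that case and unsalvageable otherwise.

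For comparison, the paper proceeds differently: it shows by a telescoping argument that boundedness of the one-step quotients $\big(\psi(\varphi(n+1))-\psi(\varphi(n))\big)/\psi'(\varphi(n+1))$ would contradict \eqref{LS3}, and then extracts subsequences to secure first \eqref{Hyp1} and then \eqref{Hyp2}. The same divergence $\sum_{n\in A}\psi'(n)=\infty$ is implicitly used there as well, in the final step where \eqref{Hyp2} is ``assumed by choosing a subsequence''; so your single induction working directly from \eqref{LS3} is, if anything, the cleaner route, but you should state the divergence hypothesis explicitly rather than promise to derive it. Note also that in the excluded case the conclusion of Theorem~\ref{psi-Hamming} is immediate anyway: $c_n=\psi'(n)$ is positive, lies in $\mathcal{I}_{\psi}$ by \eqref{asym}, and satisfies $\sum_{n=1}^\infty\chi_A(n)\psi'(n)<\infty$, so the lemma is only ever needed when $A_{\psi}(\varphi(n))\to\infty$, where your proof goes through.
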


\begin{proof}
	Assume first that there exists a constant $C>0$ such that
	\begin{equation}\label{C-inequality}
		\frac{\psi(\varphi(n+1))-\psi(\varphi(n))}{A_{\psi}(\varphi(n+1))-A_{\psi}(\varphi(n))}\leq C,\quad n\in\N.
	\end{equation}
	Note that $\psi(\varphi(n+1))-\psi(\varphi(n))\geq 0$ and that
	$$
	A_{\psi}(\varphi(n+1))-A_{\psi}(\varphi(n))=\psi'(\varphi(n+1))>0
	$$
	because $\psi\in\D_1$. Applying inequality \eqref{C-inequality} repeatedly, we obtain
	\begin{eqnarray*}
		\psi(\varphi(n+1))-\psi(\varphi(n)) &\leq& C\big(A_{\psi}(\varphi(n+1))-A_{\psi}(\varphi(n))\big)\\
		\psi(\varphi(n))-\psi(\varphi(n-1)) &\leq& C\big(A_{\psi}(\varphi(n))-A_{\psi}(\varphi(n-1))\big)\\
		&\vdots&\\
		\psi(\varphi(2))-\psi(\varphi(1)) &\leq& C\big(A_{\psi}(\varphi(2))-A_{\psi}(\varphi(1))\big).
	\end{eqnarray*} 
	By summing these inequalities, it follows that
	$$
	\psi(\varphi(n+1))-\psi(\varphi(1)) \leq C\big(A_{\psi}(\varphi(n+1))-A_{\psi}(\varphi(1))\big).
	$$
	But this violates \eqref{LS3} and consequently the assumption $\underline{\operatorname{d}}_\psi(A)=0$. Hence we conclude that there exists a strictly increasing sequence $\{n_k\}_k$ of positive integers tending to infinity such that the quotient in \eqref{Hyp1} is unbounded. By choosing a suitable subsequence of $\{n_k\}_k$, denoted again by $\{n_k\}_k$, we find that \eqref{Hyp1} holds. On the other hand, keeping in mind that $A=\{\varphi(n):n\in\N\}$, the condition \eqref{Hyp2} can be written equivalently as
	$$
	\sum_{m=n_k+1}^{n_{k+1}}\psi'(\varphi(m))\geq \sum_{m=n_{k-1}+1}^{n_{k}}\psi'(\varphi(m)).
	$$
	But since $\psi'>0$, this condition can be assumed by choosing a subsequence of $\{n_k\}_k$, denoted again by $\{n_k\}_k$. This completes the proof. 
\end{proof}

\begin{lemma}\label{lem2}
	Let $\psi\in \D_1$, then for any integers $n,m$ with $1\leq n<m$, we have
	$$
	\sum_{j=n+1}^{m}\psi'(j)
	\leq \psi(m)-\psi(n) \leq 	\sum_{j=n+1}^{m}\psi'(j) +\psi'(n)-\psi'(m).
	$$
\end{lemma}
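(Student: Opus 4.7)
The plan is to prove both inequalities simultaneously via the fundamental theorem of calculus combined with the monotonicity of $\psi'$ that concavity provides. Since $\psi\in\D_1$ is differentiable and concave, $\psi'$ is non-increasing on $(0,\infty)$; and since $\psi$ is differentiable, we may write
$$\psi(m)-\psi(n)=\int_{n}^{m}\psi'(t)\,dt=\sum_{j=n+1}^{m}\int_{j-1}^{j}\psi'(t)\,dt.$$

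For the lower bound, I would use that on each interval $[j-1,j]$ the non-increasing property of $\psi'$ gives $\psi'(t)\geq \psi'(j)$, so $\int_{j-1}^{j}\psi'(t)\,dt\geq \psi'(j)$; summing over $j=n+1,\ldots,m$ yields the left inequality directly. For the upper bound, the same monotonicity gives $\psi'(t)\leq \psi'(j-1)$ on $[j-1,j]$, hence
$$\psi(m)-\psi(n)\leq \sum_{j=n+1}^{m}\psi'(j-1)=\sum_{j=n}^{m-1}\psi'(j).$$
The final step is a simple reindexing: rewrite $\sum_{j=n}^{m-1}\psi'(j)$ as $\sum_{j=n+1}^{m}\psi'(j)+\psi'(n)-\psi'(m)$, which produces the stated right inequality.

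There is no real obstacle here; the argument is a standard midpoint/endpoint comparison for a monotone derivative, and the concavity hypothesis on $\psi$ is used only to ensure $\psi'$ is non-increasing so that the pointwise bounds $\psi'(j)\leq \psi'(t)\leq \psi'(j-1)$ on $[j-1,j]$ are valid. The only thing to be careful about is the indexing in the reindexing step, but no estimates beyond the monotonicity of $\psi'$ are required.
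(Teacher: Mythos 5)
Your proof is correct and takes essentially the same route as the paper: both write $\psi(m)-\psi(n)=\int_n^m\psi'(t)\,dt$, split the integral into unit intervals, and use the non-increasing nature of $\psi'$ (from concavity) to compare with the endpoint values, the upper bound then following by the same reindexing $\sum_{j=n}^{m-1}\psi'(j)=\sum_{j=n+1}^{m}\psi'(j)+\psi'(n)-\psi'(m)$.
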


\begin{proof}
	Since $\psi^{\prime}$ is a non-increasing and a non-negative function, 
	\begin{eqnarray*}
		\psi(m)-\psi(n)&=&\int_{n}^{m}\psi'(t)\, dt
		=\sum_{j=n}^{m-1}\int_j^{j+1}\psi'(t)\, dt
		\leq \sum_{j=n}^{m-1}\psi'(j).
	\end{eqnarray*}
	This implies the upper bound. The lower bound is proved similarly.
\end{proof}

Now we are ready to prove Theorem~\ref{psi-Hamming}.
\begin{proof}[Proof of Theorem~\ref{psi-Hamming}]
	Let $\{n_k\}_k$ be a strictly increasing sequence of positive integers tending to infinity such that \eqref{Hyp1} and \eqref{Hyp2} hold. Set $n_0=0$. Without loss of generality, we may suppose that $\varphi(n_0)=\varphi(0)=0$.
	
	For each $k\in\N\cup\{0\}$, choose constants $B(m)$ depending on $m$ such that
	\begin{equation}\label{B-constants2}
		2\geq B(\varphi(n_k)+1)\geq B(\varphi(n_k)+2)\geq \cdots \geq B(\varphi(n_{k+1}))\geq 1.
	\end{equation}
	There are $\varphi(n_{k+1})-\varphi(n_k)$ such constants for each $k$. 
	Then define a sequence $\{c_m\}_m$ of positive real numbers by
	$$
	c_m=\frac{B(m)\psi'(m)}{2^k(A_{\psi}(\varphi(n_{k+1}))-A_{\psi}(\varphi(n_{k})))},\quad
	\varphi(n_k)+1\leq m\leq \varphi(n_{k+1}),\ k\in\N\cup\{0\}.
	$$
	From \eqref{B-constants2}, it is clear that 
	$$
	\frac{c_{\varphi(n_k)+1}}{\psi'(\varphi(n_k)+1)}\geq \frac{c_{\varphi(n_k)+2}}{\psi'(\varphi(n_k)+2)}\geq\cdots\geq \frac{c_{\varphi(n_{k+1})}}{\psi'(\varphi(n_{k+1}))}.
	$$ 
	Moreover, \eqref{Hyp2} yields
	\begin{eqnarray*}
		\frac{B(\varphi(n_{k+1})+1)}{B(\varphi(n_{k+1}))}&\leq& 2
		\leq 2\cdot\frac{A_{\psi}(\varphi(n_{k+2}))-A_{\psi}(\varphi(n_{k+1}))}{A_{\psi}(\varphi(n_{k+1}))-A_{\psi}(\varphi(n_{k}))}\\
		&=&\frac{2^{k+1}(A_{\psi}(\varphi(n_{k+2}))-A_{\psi}(\varphi(n_{k+1})))}{2^k(A_{\psi}(\varphi(n_{k+1}))-A_{\psi}(\varphi(n_{k})))},
	\end{eqnarray*}
	and thus 
	$$
	\frac{c_{\varphi(n_{k+1})}}{\psi'(\varphi(n_{k+1}))}\geq \frac{c_{\varphi(n_{k+1})+1}}{\psi'(\varphi(n_{k+1})+1)}
	$$ 
	follows by the definition of the numbers $\frac{c_m}{\psi'(m)}$. Therefore, the sequence $\left\lbrace \frac{c_m}{\psi'(m)}\right\rbrace _m$ is non-increasing, and clearly tends to zero as $m\to\infty$.
	
	It remains to prove that $\sum_{n=1}^\infty c_n$ diverges and \eqref{subseries} converges. By using Lemma~\ref{lem2} together with \eqref{Hyp1} and \eqref{Hyp2}, we obtain
	\begin{eqnarray*}
		\sum_{m=\varphi(n_k)+1}^{\varphi(n_{k+1})}c_m &=&
		\sum_{m=\varphi(n_k)+1}^{\varphi(n_{k+1})}\frac{\psi'(m)B(m)}{2^k(A_{\psi}(\varphi(n_{k+1}))-A_{\psi}(\varphi(n_{k})))}\\
		&\geq& \frac{\sum_{m=\varphi(n_k)+1}^{\varphi(n_{k+1})}\psi'(m)}{2^k(A_{\psi}(\varphi(n_{k+1}))-A_{\psi}(\varphi(n_{k})))}\\
		&\geq&\frac{\psi(\varphi(n_{k+1}))-\psi(\varphi(n_k))}{2^k(A_{\psi}(\varphi(n_{k+1}))-A_{\psi}(\varphi(n_{k})))} -\frac{\psi'(\varphi(n_{k}))-\psi'(\varphi(n_{k+1}))}{2^k(A_{\psi}(\varphi(n_{k+1}))-A_{\psi}(\varphi(n_{k})))}\\
		&\geq & 1-\frac{\psi'(\varphi(n_{k}))-\psi'(\varphi(n_{k+1}))}{2^k(A_{\psi}(\varphi(n_{k+1}))-A_{\psi}(\varphi(n_{k})))}\geq 1-\frac{M}{2^k}
	\end{eqnarray*}
	for some positive constant $M$. Hence,
	the series $\sum_{n=1}^\infty c_n$ diverges. 
	On the other hand, 
	\begin{eqnarray*}
		\sum_{m=n_k+1}^{n_{k+1}}c_{\varphi(m)}&=&\sum_{m=n_k+1}^{n_{k+1}}\frac{B(\varphi(m))\psi'(\varphi(m))}{2^k(A_{\psi}(\varphi(n_{k+1}))-A_{\psi}(\varphi(n_{k})))}
		\\
		&\leq&\frac{1}{2^{k-1}(A_{\psi}(\varphi(n_{k+1}))-A_{\psi}(\varphi(n_{k})))}\sum_{m=n_k+1}^{n_{k+1}}\psi'(\varphi(m))=\frac{1}{2^{k-1}},
	\end{eqnarray*}
	and hence $\sum_{n=1}^\infty c_{\varphi(n)}$ converges. This completes the proof.
\end{proof}

Next, we proceed to generalize the second implication of Theorem~\ref{New-Hamm} (proved by \v{S}al\'at in \cite[Theorem~2]{Salat}) from $\psi(x)=x$ to $\psi\in\D$. From the densities point of view, the assumption ``eventually'' in \cite[Theorem~2]{Salat} can be dropped without any loss of generality.

\begin{theorem}\label{Salat2-thm}
Let $\psi\in\D$, and let $\{c_n\}_n$ be a non-negative real sequence satisfying \eqref{Salat-condition30}. If $A\subset\N$ is an infinite set satisfying \eqref{convergent-sum0}, then the following assertions~hold.
	\begin{itemize}
		\item [\textnormal{(a)}] If $\left\{\frac{c_n}{\psi'(n)}\right\}_n$ is non-increasing, then $\underline{\operatorname{d}}_\psi(A)=0$.
		
		\item [\textnormal{(b)}] If $\left\{\frac{c_n}{\psi'(n)}\right\}_n$ is non-decreasing, then $\operatorname{d}_\psi(A)=0$.
	\end{itemize} 
\end{theorem}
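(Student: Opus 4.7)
Both parts are naturally attacked by contradiction using Abel's partial summation, in the same spirit as the proof of Theorem~\ref{Th2}. Introduce $u_k:=c_k/\psi'(k)$ and $\Psi(n):=\sum_{k=1}^n\psi'(k)$, the latter tending to infinity because $\psi\in\D$ is unbounded. The key tool is the identity
$$\sum_{k=N}^n a_k u_k = S(n)u_n - S(N-1)u_N + \sum_{k=N}^{n-1} S(k)(u_k-u_{k+1}),\qquad S(n)=\sum_{k=1}^n a_k,$$
applied in two ways: with $a_k=\chi_A(k)\psi'(k)$, making $S=A_\psi$ and the left side $\sum_{k=N}^n\chi_A(k)c_k$; and with $a_k=\psi'(k)$, making $S=\Psi$ and the left side $\sum_{k=N}^n c_k$.

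For part (a), I would assume $\underline{\operatorname{d}}_\psi(A)>0$ and pick $\delta\in(0,\underline{\operatorname{d}}_\psi(A))$, so that $A_\psi(k)\ge\delta\Psi(k)$ for $k\ge N$ and some $N$. Because $u_k$ is non-increasing, every bracketed increment $u_k-u_{k+1}$ in the Abel expansion is non-negative, which allows the substitution $A_\psi(k)\ge\delta\Psi(k)$ throughout while preserving the inequality. The resulting lower bound matches, up to boundary terms at $N$, exactly $\delta$ times the Abel expansion of $\sum_{k=N}^n c_k$. Algebraically this collapses to
$$\sum_{k=N}^n\chi_A(k)c_k\ge \delta\sum_{k=N}^n c_k + C_N$$
for a constant $C_N$ depending only on $N$. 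Since $\sum c_k=\infty$, the sub-series diverges, contradicting \eqref{convergent-sum0}, and hence $\underline{\operatorname{d}}_\psi(A)=0$.

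For part (b), a shorter direct estimate works. Convergence of $\sum\chi_A(k)c_k$ means its tail past some $M_\varepsilon$ is smaller than any prescribed $\varepsilon$; and since $u_k$ is non-decreasing and cannot be eventually zero (because $\sum c_n=\infty$), for sufficiently large $M$ we have $u_{M+1}>0$ and can factor it out:
$$\varepsilon>\sum_{k=M+1}^n\chi_A(k)\psi'(k)u_k\ge u_{M+1}\bigl(A_\psi(n)-A_\psi(M)\bigr).$$
This shows the non-decreasing sequence $\{A_\psi(n)\}$ is bounded, hence convergent; dividing by $\Psi(n)\to\infty$ yields $\operatorname{d}_\psi(A)=0$. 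The main obstacle is the careful bookkeeping in (a): after the $\delta$-substitution inside the telescoping sum the leftover term $\Psi(n)u_n$ could, a priori, dominate the divergent $\sum c_k$, and one must check that it in fact combines with the remaining telescoping sum to reproduce $\sum c_k$ exactly (up to a fixed additive constant), so that the divergence is genuinely transferred to the sub-series. Part (b), by contrast, needs only the single monotonicity bound and the elementary fact that $u$ is not identically zero.
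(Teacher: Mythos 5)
Your proof is correct, but it takes a genuinely different route from the paper's argument, which derives both parts in a few lines from Rajagopal's comparison theorem (Theorem~\ref{L1}) applied with $s_n=\chi_A(n)$, $a_n=c_n$, $b_n=\psi'(n)$: since \eqref{convergent-sum0} and \eqref{Salat-condition30} force $\sum_{k\le n}\chi_A(k)c_k/\sum_{k\le n}c_k\to 0$, the sandwich inequalities immediately give $\underline{\operatorname{d}}_\psi(A)=0$ in case (a) and $\operatorname{d}_\psi(A)=0$ in case (b). Your Abel-summation argument is closer in spirit to the paper's alternative proof in Appendix~\ref{App-B}, but with a real improvement in part (a): by bounding $A_\psi(k)$ below by $\delta\Psi(k)$ with $\Psi(k)=\sum_{j\le k}\psi'(j)$, rather than by $\delta\psi(k)$, the telescoping reassembles exactly $\delta\sum_{k=N}^{n}c_k$ plus the boundary constant $\bigl(\delta\Psi(N-1)-A_\psi(N-1)\bigr)u_N$ (I checked this bookkeeping; it closes as you claim), so you need neither log-concavity of $\psi$ nor \eqref{asym}, both of which the appendix needs for its case (a); your part (b), which shows $A_\psi(n)$ is in fact bounded, is essentially the appendix's case (b) run directly rather than by contraposition, and your handling of $u_{M+1}>0$ is fine. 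One small caveat: $\Psi(n)\to\infty$ does not literally follow from unboundedness of $\psi$ for arbitrary $\psi\in\D$ (the integer samples $\psi'(k)$ need not capture the growth of $\psi$), but this divergence is implicitly required for the $\psi$-density to be meaningful and is equally needed by the paper's invocation of Theorem~\ref{L1}, which demands $B_n\to\infty$, so it is a shared standing assumption rather than a gap in your argument. What the paper's route buys is brevity and symmetry between (a) and (b); what yours buys is a self-contained elementary proof and, compared with Appendix~\ref{App-B}, removal of the extra hypotheses in case (a).
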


The proof of Theorem~\ref{Salat2-thm} is based on the following  Rajagopal's result. 

\begin{thm}\label{L1}
	\textnormal{\cite[Theorem~3]{R}}
	Let $\{s_n\}_n$ be a real sequence, and let $\{a_n\}_n$ and $\{b_n\}_n$ be positive real sequences such that
	$$
	A_n:=\sum_{k=1}^na_k\to \infty,\quad B_n:=\sum_{k=1}^nb_k\to \infty,\quad n\to \infty.
	$$
	Denote
	$$
	\sigma(s_n,a_n)=\frac{\sum_{k=1}^na_ks_k}{A_n} 
	\quad\text{and}\quad
	\sigma(s_n,b_n)=\frac{\sum_{k=1}^nb_ks_k}{B_n}.
	$$
	If $\{a_n/b_n\}_n$ is non-increasing, then
	$$
	\liminf_{n\to\infty}\sigma(s_n,b_n) \leq \liminf_{n\to\infty}\sigma(s_n,a_n)\leq \limsup_{n\to\infty}\sigma(s_n,a_n)\leq\limsup_{n\to\infty}\sigma(s_n,b_n).
	$$
\end{thm}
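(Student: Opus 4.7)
The plan is to prove the two non-trivial outer inequalities of the conclusion by applying Abel's partial summation to rewrite $\sigma(s_n,a_n)$ as an exact weighted average of the prior values $\{\sigma(s_k,b_k)\}_{k\le n}$, and then running a standard Toeplitz-type limit passage; the middle inequality $\liminf \le \limsup$ is of course immediate.

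First, I would set $r_k := a_k/b_k$, which is positive and non-increasing by hypothesis, and $T_k := \sum_{j=1}^k b_j s_j$ with $T_0 := 0$, so that $b_k s_k = T_k - T_{k-1}$ and hence $a_k s_k = r_k(T_k - T_{k-1})$. Abel's summation formula then yields
$$
\sum_{k=1}^n a_k s_k \;=\; r_n T_n \;+\; \sum_{k=1}^{n-1}(r_k - r_{k+1})\,T_k,
$$
and the same identity applied with $s_k \equiv 1$ gives $A_n = r_n B_n + \sum_{k=1}^{n-1}(r_k - r_{k+1}) B_k$. Defining weights $w_n := r_n B_n > 0$ and $w_k := (r_k - r_{k+1}) B_k \ge 0$ for $k < n$, one has $\sum_{k=1}^n w_k = A_n$, and since $T_k = B_k\,\sigma(s_k,b_k)$, this produces the exact identity
$$
\sigma(s_n,a_n) \;=\; \frac{1}{A_n}\sum_{k=1}^n w_k\,\sigma(s_k,b_k).
$$

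For the upper inequality, let $L := \limsup_n \sigma(s_n,b_n)$; the case $L = +\infty$ is trivial, so assume $L < \infty$. Given $\varepsilon > 0$, choose $N$ with $\sigma(s_k,b_k) \le L + \varepsilon$ for all $k \ge N$, and set $C^* := \sum_{k=1}^{N-1} w_k$. The head $\sum_{k=1}^{N-1} w_k\,\sigma(s_k,b_k)$ is a fixed constant, while the non-negativity of the weights gives the tail estimate $\sum_{k=N}^n w_k\,\sigma(s_k,b_k) \le (L + \varepsilon)(A_n - C^*)$. Dividing by $A_n$ and using $A_n \to \infty$ yields $\limsup_n \sigma(s_n,a_n) \le L + \varepsilon$, and letting $\varepsilon \to 0^+$ completes the upper bound. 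The lower inequality follows immediately by applying the just-proved upper bound to the sequence $\{-s_n\}_n$ in place of $\{s_n\}_n$.

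The main obstacle I anticipate is the bookkeeping in the Abel transform: one must verify that the weights $w_k$ are non-negative (this is precisely where the monotonicity hypothesis on $\{a_k/b_k\}_k$ enters), that they sum to exactly $A_n \to \infty$, and that the Toeplitz argument remains valid regardless of the sign of $L + \varepsilon$ and without any lower bound on the sequence $\{\sigma(s_k,b_k)\}_k$. All three of these concerns are absorbed by splitting the weighted sum at the fixed cutoff $N$, which sequesters any potentially problematic contribution into a bounded head term that vanishes after division by $A_n$.
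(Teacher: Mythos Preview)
The paper does not give its own proof of this statement: Theorem~\ref{L1} is quoted verbatim from Rajagopal \cite[Theorem~3]{R} and used as a black box in the proof of Theorem~\ref{Salat2-thm}. Your Abel-summation rewrite of $\sigma(s_n,a_n)$ as a convex combination of $\{\sigma(s_k,b_k)\}_{k\le n}$, followed by a Toeplitz head/tail split, is correct and is in fact the classical route to this inequality. It is worth noting that your argument is very much in the spirit of the paper's other proofs: the same Abel transform appears in Lemma~\ref{Lem1} and in Appendix~\ref{App-B} (where it is used to give an alternative proof of Theorem~\ref{Salat2-thm} that bypasses Rajagopal's result entirely), and the Toeplitz passage you invoke is exactly Lemma~\ref{Lem2}. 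One cosmetic point: when you write ``assume $L<\infty$'' you should separately dispose of the case $L=-\infty$, since ``choose $N$ with $\sigma(s_k,b_k)\le L+\varepsilon$'' is not meaningful there; the fix is immediate (replace $L+\varepsilon$ by an arbitrary real $M$ and conclude $\limsup\sigma(s_n,a_n)\le M$ for every $M$).
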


\begin{proof}[Proof of Theorem~\ref{Salat2-thm}]
	(a) Suppose that $\left\{\frac{c_n}{\psi'(n)}\right\}_n$ is non-increasing. Choosing $a_n = c_n$, $b_n = \psi'(n)$ and $s_n = \chi_A(n)$ in Theorem~\ref{L1}, we get
	\begin{eqnarray*}
		\underline{\operatorname{d}}_\psi(A)&=&\liminf_{n\to\infty}\frac{\sum_{k=1}^n\chi_A(k)\psi'(k)}{\sum_{k=1}^n\psi'(k)} \leq \liminf_{n\to\infty}\frac{\sum_{k=1}^n\chi_A(k)c_k}{\sum_{k=1}^nc_k} \\
		&\leq& \limsup_{n\to\infty}\frac{\sum_{k=1}^n\chi_A(k)c_k}{\sum_{k=1}^nc_k} \leq \limsup_{n\to\infty}\frac{\sum_{k=1}^n\chi_A(k)\psi'(k)}{\sum_{k=1}^n\psi'(k)}= \overline{\operatorname{d}}_\psi(A).
	\end{eqnarray*}
	In particular,
	$$
	0 \leq \underline{\operatorname{d}}_\psi(A) \leq \liminf_{n\to\infty}\frac{\sum_{k=1}^n\chi_A(k)c_k}{\sum_{k=1}^nc_k} \leq \limsup_{n\to\infty}\frac{\sum_{k=1}^n\chi_A(k)c_k}{\sum_{k=1}^nc_k}.
	$$
	From \eqref{convergent-sum0} and \eqref{Salat-condition30}, we know that 
	\begin{eqnarray}\label{lim}
		\lim_{n\to\infty}\frac{\sum_{k=1}^n\chi_A(k)c_k}{\sum_{k=1}^nc_k}=0.
	\end{eqnarray}
	Hence the assertion $\underline{\operatorname{d}}_\psi(A)=0$ follows.
	
	(b) Suppose then that $\left\{\frac{c_n}{\psi'(n)}\right\}_n$ is non-decreasing, in which case $\left\{\frac{\psi'(n)}{c_n}\right\}_n$ is non-increasing. 
	Using Theorem~\ref{L1} again, we obtain
	$$
	0\leq  \liminf_{n\to\infty}\frac{\sum_{k=1}^n\chi_A(k)c_k}{\sum_{k=1}^nc_k}\leq \underline{\operatorname{d}}_\psi(A)  \leq  \overline{\operatorname{d}}_\psi(A) \leq  \limsup_{n\to\infty}\frac{\sum_{k=1}^n\chi_A(k)c_k}{\sum_{k=1}^nc_k}.
	$$
	Then, by making use of \eqref{lim} again, the assertion $\operatorname{d}_\psi(A)=0$ follows. 
\end{proof}

\begin{remark}
	(a) Let $\psi\in\D_1$, and let $\{c_n\}_n$ be a non-negative real sequence such that  $\left\{\frac{c_n}{\psi'(n)}\right\}_n$ is non-increasing. Since $\psi'$ is non-increasing, the sequence $\{c_n\}_n$ is non-increasing.  Hence, comparing Theorem~\ref{Salat2-thm}(a) and the converse implication of Theorem~\ref{New-Hamm}, we see that a stronger assumption yields a stronger result in the~sense~of~\eqref{D1-densities}. 
	
	On the other hand, if $\{c_n\}_n$ is non-increasing and $\psi\in\D_2$, then $\left\{\frac{c_n}{\psi'(n)}\right\}_n$ is non-increasing. Comparing Theorem~\ref{Salat2-thm}(a) and and the converse implication of Theorem~\ref{New-Hamm} in this case, we see that a weaker assumption yields a weaker result in the sense of \eqref{D2-densities}.
	
	(b) Let $\psi\in\D_2$, and suppose that $\{c_n\}_n$ is a non-negative real sequence such that  $\left\{\frac{c_n}{\psi'(n)}\right\}_n$ is non-decreasing. It follows that $\{c_n\}_n$ is non-decreasing. But this means that $\sum_{n=1}^\infty \chi_A(n)c_n=\infty$ for any infinite subset $A\subset\N$. Consequently, Theorem~\ref{Salat2-thm}(b) is not applicable in the case of $\psi\in\D_2$.
	
	On the other hand, if $\psi\in\D_1$, then $\{c_n\}_n$ being non-increasing (as in \cite[Theorem~2]{Salat}) has no correlation with $\left\{\frac{c_n}{\psi'(n)}\right\}_n$ being non-decreasing (as in Theorem~\ref{Salat2-thm}(b)).
\end{remark}

An alternative proof of Theorem~\ref{Salat2-thm} when $\psi\in\D$ is log-concave and satisfies \eqref{asym} is given in Appendix~\ref{App-B} by relying on Abel's partial summation formula instead of Rajagopal's result. 
By definition, if $\psi\in\D$ is log-concave, then $(\log \psi(x))'=\psi'(x)/\psi(x)$ is non-increasing, and hence bounded from above. Therefore, the maximal growth-rate for a log-concave function $\psi$ is $\log \psi(x)=O(x)$. Every function $\psi\in\D_1$ is clearly log-concave, while functions $\psi\in\D_2$ may or may not be log-concave \cite{HL}.

By combining Theorems~\ref{psi-Hamming} and \ref{Salat2-thm}(a), we derive the following corollary, which provides a general necessary and sufficient conditions for the divergence of the sub-series \eqref{subseries}.

\begin{corollary}
	Let $\psi\in \D_1$, and let $A\subset \N$ be an infinite set. The following two assertions are equivalent:
	\begin{itemize}
		\item[\textnormal{(a)}] $\overline{	\operatorname{d}}_{\psi}(A)>0$;
		\item[\textnormal{(b)}] \eqref{subseries} diverges for any $\{c_n\}_n \in \mathcal{I}_{\psi}$.
	\end{itemize}
\end{corollary}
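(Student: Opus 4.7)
The plan is a clean two-way contrapositive that couples Theorem~\ref{psi-Hamming} with Theorem~\ref{Salat2-thm}(a). Together these two results establish the bi-implication
\[
\underline{\operatorname{d}}_\psi(A)=0\;\;\Longleftrightarrow\;\;\text{there exists }\{c_n\}_n\in\mathcal{I}_\psi\text{ with }\sum_{n=1}^\infty\chi_A(n)c_n<\infty,
\]
and the corollary is essentially the contrapositive of this equivalence. Since $\underline{\operatorname{d}}_\psi(A)\le\overline{\operatorname{d}}_\psi(A)$ always, the negation of (a) forces $\underline{\operatorname{d}}_\psi(A)=0$, which is exactly the density that appears in both source theorems and that drives both directions of the argument.

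For the implication (b)$\Rightarrow$(a), I would argue by contraposition. If the $\psi$-density in (a) vanishes, then in particular $\underline{\operatorname{d}}_\psi(A)=0$, so Theorem~\ref{psi-Hamming} directly produces a positive sequence $\{c_n\}_n\in\mathcal{I}_\psi$ for which the sub-series $\sum_{n=1}^\infty\chi_A(n)c_n$ converges. A single such witness suffices to defeat the universal statement in (b).

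For the implication (a)$\Rightarrow$(b), I would fix an arbitrary $\{c_n\}_n\in\mathcal{I}_\psi$ and assume for contradiction that $\sum_{n=1}^\infty\chi_A(n)c_n<\infty$. By the definition of $\mathcal{I}_\psi$, the sequence $\{c_n/\psi'(n)\}_n$ is non-increasing and $\sum_{n=1}^\infty c_n=\infty$, so the hypotheses of Theorem~\ref{Salat2-thm}(a) are met and its conclusion $\underline{\operatorname{d}}_\psi(A)=0$ applies, contradicting the positivity of the $\psi$-density in (a). Hence the sub-series must diverge.

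There is essentially no obstacle to carrying this out, since the two invoked theorems already do all of the work. The only technical care required is to verify that membership in $\mathcal{I}_\psi$ lines up with the monotonicity and divergence hypotheses appearing in Theorems~\ref{psi-Hamming} and \ref{Salat2-thm}(a), which is immediate from the definition of the class $\mathcal{I}_\psi$ given at the start of the section.
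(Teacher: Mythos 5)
Your (b)$\Rightarrow$(a) direction is sound and is exactly the paper's intended route: if $\overline{\operatorname{d}}_\psi(A)=0$, then $\underline{\operatorname{d}}_\psi(A)=0$ by \eqref{D1-densities}, and Theorem~\ref{psi-Hamming} supplies a witness $\{c_n\}_n\in\mathcal{I}_\psi$ with $\sum_{n=1}^\infty\chi_A(n)c_n<\infty$, defeating the universal statement in (b). The genuine gap is in your (a)$\Rightarrow$(b) step: Theorem~\ref{Salat2-thm}(a) only yields $\underline{\operatorname{d}}_\psi(A)=0$, and this does \emph{not} contradict assertion (a), which concerns the upper density $\overline{\operatorname{d}}_\psi(A)$. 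A set can have vanishing lower $\psi$-density and positive upper $\psi$-density simultaneously, so the contradiction you invoke is not there; the inequality $\underline{\operatorname{d}}_\psi(A)\leq\overline{\operatorname{d}}_\psi(A)$ helps only in the direction where you already used it.

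Moreover, no argument can close this gap for the statement as printed with the upper density: take $\psi(x)=x\in\D_1$, so that $\mathcal{I}_\psi=\mathcal{I}$. Proposition~\ref{Salat-example} (whose sequence, as constructed in Appendix~\ref{App-A}, is non-increasing and positive, hence lies in $\mathcal{I}$) produces a set $A$ with $\overline{\operatorname{d}}(A)>0$ and $\underline{\operatorname{d}}(A)=0$ together with such a sequence for which the sub-series \eqref{subseries} converges; thus (a) holds while (b) fails. What the combination of Theorems~\ref{psi-Hamming} and \ref{Salat2-thm}(a) actually delivers --- and this is precisely the bi-implication you display at the start of your proposal --- is the equivalence of (b) with $\underline{\operatorname{d}}_\psi(A)>0$, i.e.\ the corollary with the \emph{lower} $\psi$-density in (a), in the spirit of Theorem~\ref{New-Hamm}. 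With that reading your argument is complete and coincides with the paper's one-line derivation; as written against $\overline{\operatorname{d}}_\psi(A)$, the forward implication does not follow from the cited theorems.
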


%%%%%%%%%%%%%%%%%%%%%%%%%
%%
%% SECTION 5
%%
%%%%%%%%%%%%%%%%%%%%%%%%%

\section{Generalization of Auerbach's theorem:\\ Research question (Q1)}

Motivated by Theorem~\ref{Aue}, one may ask how small the set \(A\) can be in terms of the asymptotic $\psi$-density to ensure the divergence of $\sum_{n=1}^\infty \chi_A(n)c_n$? In other words, what is the maximal growth rate for $\psi \in \mathcal{D}_2$ that satisfies $\operatorname{d}_\psi(A)=0$ in Theorem~\ref{Aue}? To this end, the reader is invited to recall the inequalities in \eqref{D2-densities}.

In the following, we present a \(\psi\)-version of Theorem~\ref{Aue} for the case where \(\psi \in \mathcal{D}_2\) under a certain additional condition. The proof of Theorem~\ref{Th1} follows in large extent the one in  \cite[Lemma, p.~116]{NS}.

\begin{theorem}\label{Th1}
Let $\{c_n\}_n$ be a non-negative real sequence such that \eqref{Salat-condition30} holds. Then there exists a set $A \subset \N$ such that $\sum_{n=1}^\infty \chi_A(n)c_n=\infty$ and
	\begin{eqnarray}\label{Cond}
	\lim\limits_{n\to \infty}\frac{A_\psi(n)}	{n\psi'(n)}=0 
	\end{eqnarray}
holds for any $\psi \in \D_2$.
In addition, if $\psi\in\D_2$ satisfies
	\begin{equation}\label{Cond2}
	\liminf_{n\to \infty}\frac{\psi(n)}{n\psi'(n)}>0,
	\end{equation}
then $\operatorname{d}_{\psi}(A)=0$.
\end{theorem}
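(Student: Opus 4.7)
My plan is to construct the set $A$ by a block-decomposition argument in the spirit of the proof of Auerbach's theorem in \cite{NS}, and then to exploit the convexity of every $\psi \in \D_2$ in order to upgrade a linear-density conclusion to the $\psi$-density conclusion. The key observation is that for $\psi \in \D_2$ the derivative $\psi'$ is non-decreasing, so
$$
A_\psi(n) = \sum_{k=1}^n \psi'(k)\chi_A(k) \leq \psi'(n)\,A(n),
$$
which yields
$$
\frac{A_\psi(n)}{n\,\psi'(n)} \leq \frac{A(n)}{n}.
$$
Consequently it is enough to build a single $A \subset \N$ with $\sum_{n\in A} c_n = \infty$ and $A(n)/n \to 0$; the same $A$ will then witness \eqref{Cond} for every $\psi \in \D_2$ simultaneously.

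For the construction, starting from $\sum_{n=1}^\infty c_n = \infty$, I would partition $\N$ into consecutive blocks $B_k = (N_{k-1},N_k]$ with $N_0=0$, choosing $N_k$ inductively so that $\sum_{n\in B_k}c_n$ is bounded below (say by $1$) while $|B_k|$ grows rapidly enough to ensure that the cumulative count of selected indices up through the $k$th block is $o(N_k)$. Within each block, I would select a sparse subset $A_k \subset B_k$ that still retains a controllable portion of the block's $c$-mass, for instance by taking the indices on which $c_n$ is largest and invoking an averaging argument to guarantee $\sum_{n \in A_k}c_n \geq 1/k$. Setting $A = \bigcup_k A_k$ then gives $\sum_{n \in A}c_n = \infty$ by comparison with the harmonic series, while $A(N_k)/N_k \to 0$ by construction, and hence $A(n)/n \to 0$ along the full sequence since $A(n)$ is constant on each block.

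For the additional claim, I would simply factor
$$
\frac{A_\psi(n)}{\psi(n)} = \frac{A_\psi(n)}{n\,\psi'(n)} \cdot \frac{n\,\psi'(n)}{\psi(n)},
$$
noting that the first factor tends to $0$ by what has just been shown, while assumption \eqref{Cond2} forces the reciprocal $n\psi'(n)/\psi(n)$ to be bounded above. Hence $A_\psi(n)/\psi(n) \to 0$, and since this quantity is non-negative, $\operatorname{d}_\psi(A) = 0$ follows.

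The main obstacle lies in engineering the block lengths $N_k$ and the within-block selections so that sparsity and divergence are achieved simultaneously; this is essentially the content of the [NS] construction for the classical case $\psi(x)=x$. Once that is in place, the passage to arbitrary $\psi \in \D_2$ comes for free via the trivial inequality $\psi'(k) \leq \psi'(n)$ for $k \leq n$, so no construction tailored to each individual $\psi$ is required.
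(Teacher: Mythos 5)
Your reduction is sound, and it is in fact the same mechanism the paper uses: since $\psi'$ is non-decreasing for $\psi\in\D_2$, one has $A_\psi(n)\le \psi'(n)A(n)$, so \eqref{Cond} for every $\psi\in\D_2$ simultaneously follows from $A(n)/n\to 0$; likewise your factorization $\frac{A_\psi(n)}{\psi(n)}=\frac{A_\psi(n)}{n\psi'(n)}\cdot\frac{n\psi'(n)}{\psi(n)}$ together with \eqref{Cond2} is exactly the paper's (one-line) second step. The genuine gap is in your construction of $A$. First, the statement that ``$A(n)$ is constant on each block'' is false: $A_k$ lies inside $B_k$, so $A(n)$ grows within the block. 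More importantly, choosing in each block $B_k=(N_{k-1},N_k]$ the indices carrying the largest values of $c_n$ controls only the endpoint ratios $A(N_k)/N_k$, not $A(n)/n$ for intermediate $n$. Nothing prevents those selected indices from being clustered, say at the start of the block: if $|B_k|$ is huge compared with $N_{k-1}$ and the top $\lceil |B_k|/k\rceil$ values of $c_n$ happen to sit at positions $N_{k-1}+1,N_{k-1}+2,\dots$, then at $n\approx N_{k-1}+|B_k|/k$ one gets $A(n)/n$ close to $1$. Thus your $A$ may have $\overline{\operatorname{d}}(A)$ positive (even equal to $1$), and \eqref{Cond} already fails for $\psi(x)=x\in\D_2$. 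So the construction, as described, does not deliver the limit you need along the full sequence of integers.

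What is required is a selection that is uniformly sparse \emph{inside} each block, not merely sparse in aggregate. This is precisely what the binary-tree/cofinal-branch construction of \cite{NS}, reproduced in the paper, provides: the set used on the $r$-th block has density $2^{-r}$ at every point of $(n_{r-1},n_r]$, and together with $n_r>2n_{r-1}$ this yields $A(n)\le \frac{1}{2^{r-1}}\left(r-\frac12\right)n$ for all $n$, hence \eqref{Cond}. A repair within your framework is to replace ``largest values'' by a pigeonhole over residue classes: split $B_k$ into the $k$ classes modulo $k$ and keep the class carrying at least a $1/k$ fraction of $\sum_{n\in B_k}c_n$; this retains mass at least $1/k$ while spreading $A_k$ with density about $1/k$ throughout the block, and with rapidly growing blocks one then gets $A(n)/n\to 0$ at all $n$. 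Alternatively, you could bypass the construction altogether by invoking Theorem~\ref{Aue} as a black box (it gives an infinite $A$ with $\operatorname{d}(A)=0$ and $\sum_{n\in A}c_n=\infty$ under \eqref{Salat-condition30}) and then applying your convexity reduction; that would make the argument complete.
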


\begin{remark}
(a) The assertion \eqref{Cond} is weaker than the assertion $\operatorname{d}_{\psi}(A)=0$ because for any $\psi\in \D_2$, we have 
	$$
	\frac{A_{\psi}(n)}{\psi(n)} \geq (1+o(1))\frac{A_\psi(n)}{n\psi'(n)}
	$$
for all $n$ large enough. This follows from the asymptotic inequality
	$$
	\psi(n)\sim \sum_{k=1}^{n}\psi'(k)\leq n\psi'(n), \quad n \to \infty.
	$$
	
(b) If $\psi(x)=x$, then \eqref{Cond2} is trivially valid, and the second assertion in Theorem~\ref{Th1} coincides with Theorem~\ref{Aue}. In general, Theorem~\ref{Th1} extends Theorem~\ref{Aue} by means of \eqref{D2-densities}, answering Research question (Q1).
\end{remark}

\medskip
\noindent
\emph{Proof of Theorem~\ref{Th1}}.
Let $\psi\in\D_2$. We shall show that if the series $\sum_{n=1}^\infty c_n$ is divergent, then we can construct a set $A$ on which the series also diverges and satisfies \eqref{Cond}. To begin with, we establish a binary tree with countably infinite height, where the nodes represent infinite subsets of $\N$. We initiate this tree with the root $\N$. Subsequently, for any node $N$ within the existing tree, we define two successors, denoted as $N'$ and $N''$, as follows: Let $N=\left\lbrace n_1, n_2, \ldots\right\rbrace$, where $n_i < n_{i+1}$. Then, we set 
	$$
	N'=\left\lbrace n_1, n_3, n_5, \ldots\right\rbrace \quad \text{and} \quad N''=\left\lbrace n_2, n_4, n_6, \ldots\right\rbrace.
	$$
Let us now select a cofinal branch in the tree. Let $N^{\prime}=\{1,3, \ldots\}$ and $N^{\prime \prime}=\{2,4, \ldots\}$ be the successors of $\N$. Then either $\sum_{n=1}^\infty \chi_{N^{\prime}}(n)c_n$ or $\sum_{n=1}^\infty \chi_{N^{\prime \prime}}(n) c_n$ is divergent. Accordingly, let $N_1=N^{\prime}$ or $N_1=N^{\prime \prime}$. Choose $n_0=0$ and $n_1$ such that 
	$$
	\sum_{i=n_0+1 }^{n_1} \chi_{N_1}(i)c_i>1.
	$$ 
Now let $N^{'}, N^{''}$ be the successors of $N_1$. Then either $\sum_{n=1}^\infty \chi_{N^{'}} c_n$ or $\sum_{n=1}^\infty\chi_{N^{\prime \prime}}(n) c_n$ is divergent. Accordingly choose $N_2=N^{\prime}$ or $N_2=N^{\prime \prime}$. Then choose $n_2>2 n_1$ such that 
	$$
	\sum_{i=n_1+1}^{n_2} \chi_{N_2}(i)c_i>2.
	$$
Suppose $N_1, N_2, N_3, \ldots$ have been chosen in this way. Accordingly, we define
	$$
	A=\bigcup_{i \geq 1} A_i, \quad \text{where}\quad A_i=N_i\cap \left[n_{i-1}+1,n_i \right]. 
	$$
Clearly $\sum_{n= 1}^{\infty} \chi_A(n) c_n=\infty.$
It remains to prove \eqref{Cond}. For any $n \in \N$, there exists an integer $r=r(n)$ such that $n_{r-1}<n \leqslant n_{r}$. By the convexity of $\psi$ and the fact that $n_j>2n_{j-1}$, we have
	\begin{eqnarray*}
	A_{\psi}(n)&=& \sum_{j=1}^{r-1}\sum_{i=n_{j-1}+1}^{n_j}\chi_{N_j}(i)\psi'(i)+ \sum_{i=n_{r-1}+1}^{n}\chi_{N_r}(i)\psi'(i)\\
	&\leq&\left( \sum_{j=1}^{r-1}\sum_{i=n_{j-1}+1}^{n_j}\chi_{N_j}(i)+ \sum_{i=n_{r-1}+1}^{n}\chi_{N_r}(i)\right) \psi'(n) \\
	&=&\left( \sum_{j=1}^{r-1}\frac{n_j-n_{j-1}}{2^j}+ \frac{n-n_r}{2^r}\right)  \psi'(n) \\
	&\leq&\frac{1}{2^{r-1}}\left( r-\frac{1}{2}\right) n\psi'(n),
	\end{eqnarray*}
and \eqref{Cond} follows immediately. The second assertion $\operatorname{d}_{\psi}(A)=0$ is trivial.
\hfill$\Box$

\medskip
The rest of this section is devoted to studying the assumption \eqref{Cond2}. These discussions are not directly related to the main research questions, so they can be skipped. However, we do address the question on the maximal growth rate of $\psi \in \mathcal{D}_2$, raised at the beginning of this section. The answer lies in the concept of order of growth.

\begin{lemma}
If $\psi\in\D$ is log-concave, then \eqref{Cond2} is equivalent to
	\begin{equation}\label{liminf-condition}
	\liminf_{x\to\infty}\frac{\psi(x)}{x\psi'(x)}>0.
	\end{equation}
\end{lemma}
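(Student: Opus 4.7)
The equivalence is between a $\liminf$ over integers and a $\liminf$ over the continuous variable. The implication \eqref{liminf-condition} $\Rightarrow$ \eqref{Cond2} is immediate: since $\liminf_{n\to\infty} f(n) \geq \liminf_{x\to\infty} f(x)$ for any real function $f$ on $(0,\infty)$, applying this to $f(x) = \psi(x)/(x\psi'(x))$ gives that positivity of the real liminf passes to the integer liminf. So my focus will be on the converse implication \eqref{Cond2} $\Rightarrow$ \eqref{liminf-condition}, which is where log-concavity enters.

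The plan for the converse is to compare, for each real $x \geq 1$, the value $\psi(x)/(x\psi'(x))$ with the value at the nearest integer $n := \lfloor x \rfloor$. Log-concavity of $\psi$ means that $(\log\psi)'(x) = \psi'(x)/\psi(x)$ is non-increasing, so for $x \in [n,n+1]$ we have
\[
\frac{\psi(x)}{\psi'(x)} \;\geq\; \frac{\psi(n)}{\psi'(n)}.
\]
Combined with the trivial bound $1/x \geq 1/(n+1)$, this yields the key inequality
\[
\frac{\psi(x)}{x\psi'(x)} \;\geq\; \frac{1}{n+1}\cdot\frac{\psi(n)}{\psi'(n)} \;=\; \frac{n}{n+1}\cdot\frac{\psi(n)}{n\psi'(n)}.
\]

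Taking $\liminf$ as $x \to \infty$ (so that $n = \lfloor x \rfloor \to \infty$ as well) and using $n/(n+1) \to 1$, I get
\[
\liminf_{x\to\infty}\frac{\psi(x)}{x\psi'(x)} \;\geq\; \liminf_{n\to\infty}\frac{\psi(n)}{n\psi'(n)} \;>\; 0,
\]
which is exactly \eqref{liminf-condition}. There is really no obstacle here once log-concavity is recognised as supplying the desired pointwise bound; the main content is just noting the correct interpretation of log-concavity as monotonicity of $\psi'/\psi$ and the elementary fact that $\psi' > 0$ everywhere (which follows from strict monotonicity of $\psi$ together with $\psi'/\psi$ being non-increasing, since $\psi'(x_0)=0$ at some point would force $\psi' \equiv 0$ thereafter). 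The argument makes no use of $\psi$ being convex or concave, only of log-concavity, which is why the lemma is formulated at that level of generality.
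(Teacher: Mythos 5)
Your proof is correct and uses the same key mechanism as the paper: log-concavity makes $\psi'/\psi$ non-increasing, so $\psi(x)/\psi'(x)$ at a real $x$ is controlled from below by its value at the integer $\lfloor x\rfloor$, and $1/x$ is comparable to $1/\lfloor x\rfloor$. The only difference is cosmetic — the paper runs the same comparison as a proof by contradiction along a sequence $x_n$ with $\psi(x_n)/(x_n\psi'(x_n))\to 0$, whereas you state it directly as the inequality $\liminf_{x\to\infty}\psi(x)/(x\psi'(x))\geq\liminf_{n\to\infty}\psi(n)/(n\psi'(n))$, which is arguably cleaner.
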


\begin{proof}
It is clear that \eqref{liminf-condition} implies \eqref{Cond2}. Conversely, suppose that \eqref{Cond2} holds, and suppose on the contrary to the assertion that there exists a strictly increasing sequence $\{x_n\}_n$ of positive real numbers tending to infinity such that
	$$
	\lim_{n\to\infty}\frac{\psi(x_n)}{x_n\psi'(x_n)}=0.
	$$
Let $\varepsilon>0$. Then there exists a positive integer $N$ such that $x_n\geq 1$ and
	$$
	0<\frac{\psi(x_n)}{x_n\psi'(x_n)}<\varepsilon,\quad n\geq N.
	$$
For every $n\in\N$ there exists a positive integer $k_n$ such that $k_n\leq x_n\leq k_n+1$. The assumption that $\psi$ is log-concave implies that $(\log \psi(x))'=\psi'(x)/\psi(x)$ is a non-increasing function. Therefore,
	$$
	\frac{1}{2k_n}\leq \frac{1}{k_n+1}\leq \frac{1}{x_n}\leq \varepsilon \frac{\psi'(x_n)}{\psi(x_n)}\leq \varepsilon\frac{\psi'(k_n)}{\psi(k_n)},\quad n\geq N,
	$$
and consequently
	$$
	\lim_{n\to\infty}\frac{\psi(k_n)}{k_n\psi'(k_n)}=0,
	$$
which violates \eqref{Cond2}.
\end{proof}

The order and the lower order of a function $\psi\in\D$ are given, respectively, by
	$$
	\overline{\rho}(\psi)=\limsup_{x\to\infty}\frac{\log\psi(x)}{\log x}
	\quad\textnormal{and}\quad
	\underline{\rho}(\psi)=\liminf_{x\to\infty}\frac{\log\psi(x)}{\log x}.
	$$

\begin{lemma}\label{observations-lemma}
Let $\psi\in\D_2$, and suppose that one of the following conditions holds.
\begin{itemize}
\item[\textnormal{(a)}] There exist constants  $C\geq 2$ and $R>0$ such that 
	\begin{equation}\label{doubling-condition}
	\psi(2x)\leq C\psi(x),\quad x\geq R.
	\end{equation}
\item[\textnormal{(b)}] $\psi$ satisfies
	\begin{equation}\label{psi-limsup-condition}
	\limsup_{x\to\infty}\left(x\log\frac{\psi(x+1)}{\psi(x)}\right)<\infty.
	\end{equation}
\end{itemize}
Then $\psi$ is of finite order of growth and
\eqref{liminf-condition} holds.	
\end{lemma}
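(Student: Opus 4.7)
The plan is to reduce both conclusions to the single pointwise bound
$$
\frac{\psi'(x)}{\psi(x)} \leq \frac{K}{x}
$$
holding for some constant $K > 0$ and all sufficiently large $x$. Once such a bound is in hand, integrating $(\log\psi)'(x) = \psi'(x)/\psi(x)$ from a fixed $R$ to $x$ yields $\log\psi(x) \leq K\log x + O(1)$, hence $\overline{\rho}(\psi) \leq K < \infty$; and a direct rearrangement gives $\psi(x)/(x\psi'(x)) \geq 1/K > 0$, which is exactly \eqref{liminf-condition}. So both conclusions follow in parallel once the displayed bound is established under either hypothesis.

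Under hypothesis (a), I would exploit the convexity of $\psi \in \D_2$, which makes $\psi'$ non-decreasing, to estimate
$$
\psi(2x) - \psi(x) = \int_x^{2x} \psi'(t)\, dt \geq x\psi'(x).
$$
Combined with the doubling condition $\psi(2x) \leq C\psi(x)$ valid for $x\geq R$, this gives $x\psi'(x) \leq (C-1)\psi(x)$, so the required bound holds with $K = C-1$.

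Under hypothesis (b), convexity again provides the tangent inequality $\psi(x+1) \geq \psi(x) + \psi'(x)$, hence
$$
1 + \frac{\psi'(x)}{\psi(x)} \leq \frac{\psi(x+1)}{\psi(x)}.
$$
Let $M$ be a finite upper bound for $x\log(\psi(x+1)/\psi(x))$, valid for $x \geq R'$ by the $\limsup$ hypothesis. Then $\psi(x+1)/\psi(x) \leq e^{M/x}$, and for $x$ large enough that $M/x \leq 1$, the elementary estimate $e^u - 1 \leq 2u$ on $[0,1]$ yields $\psi'(x)/\psi(x) \leq 2M/x$. This is the required bound with $K = 2M$.

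The argument is essentially a clean two-step application of convexity and there is no serious obstacle. The only point that deserves attention is in case (b), where one must pass from a condition on the discrete unit-step log-ratio $\log\psi(x+1)/\psi(x)$ to a pointwise estimate on the logarithmic derivative $\psi'/\psi$. The convexity-driven inequality $\psi(x+1) \geq \psi(x) + \psi'(x)$ is precisely the bridge that makes this transition possible.
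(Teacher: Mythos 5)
Your proof is correct. Part (a) coincides with the paper's argument: both use the monotonicity of $\psi'$ (convexity) to bound $x\psi'(x)\leq\int_x^{2x}\psi'(t)\,dt=\psi(2x)-\psi(x)$ and then feed in \eqref{doubling-condition}. Part (b), however, takes a genuinely different and more self-contained route. The paper handles (b) by citing \cite[Theorem~4.5(a)]{HL}, which asserts that \eqref{psi-limsup-condition} implies the doubling condition \eqref{doubling-condition}, thereby reducing (b) to (a); likewise, for the finite-order conclusion the paper invokes \cite[Lemma~4.3]{HL} to obtain $\overline{\rho}(\psi)\leq\log C/\log 2$. You instead extract the single pointwise bound $\psi'(x)/\psi(x)\leq K/x$ directly: in (b) via the tangent-line inequality $\psi(x+1)\geq\psi(x)+\psi'(x)$ combined with $\psi(x+1)/\psi(x)\leq e^{M/x}$ and the elementary estimate $e^u-1\leq 2u$ on $[0,1]$, and you then get both \eqref{liminf-condition} (by rearranging) and finite order (by integrating the logarithmic derivative, giving $\overline{\rho}(\psi)\leq K$) without any appeal to the companion paper. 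Your route buys independence from the external results in \cite{HL} and a marginally sharper constant in (a) (namely $K=C-1$ versus the paper's $C$); the paper's route buys brevity by reusing machinery already established in \cite{HL}.
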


\begin{proof}
Suppose  that \eqref{doubling-condition} holds. We have
	$$
	x\psi'(x)=\psi'(x)\int_x^{2x}dt\leq \int_x^{2x}\psi'(t)\, dt
	= \psi(2x)-\psi(x)\leq \psi(2x)\leq C\psi(x)
	$$
for all $x\geq R$. Thus the limit inferior in \eqref{liminf-condition} is $\geq 1/C$. Moreover, it follows by  \cite[Lemma~4.3]{HL} that $\overline{\rho}(\psi)\leq\log C/\log 2$. On the other hand, if \eqref{psi-limsup-condition} holds, then \cite[Theorem~4.5(a)]{HL} implies \eqref{doubling-condition}, from which the assertions follow.
\end{proof}

Each of the three conditions \eqref{doubling-condition}, \eqref{psi-limsup-condition} and \eqref{liminf-condition} is satisfied by every power function $\psi(x)=x^p$, $p\geq 1$.
Next, let's look at the converse of Lemma~\ref{observations-lemma}.

\begin{lemma}
If $\psi\in\D$ satisfies \eqref{liminf-condition}, then $\rho(\psi)<\infty$. In addition, if $\psi$ is log-concave, then \eqref{doubling-condition} and \eqref{psi-limsup-condition} hold.
\end{lemma}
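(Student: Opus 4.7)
The plan is to extract from \eqref{liminf-condition} a single clean inequality and then obtain all three conclusions from it. Since $\psi(x)/(x\psi'(x))$ has positive limit inferior, there exist $c>0$ and $X>0$ such that
\begin{equation*}
\frac{\psi'(x)}{\psi(x)}\leq \frac{1}{cx},\quad x\geq X.
\end{equation*}
This is the master estimate; every remaining step is a short integration of it.

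For the finiteness of the order, I would integrate the master estimate from $X$ to $x$, obtaining $\log\psi(x)-\log\psi(X)\leq (1/c)(\log x-\log X)$. Dividing by $\log x$ and letting $x\to\infty$ gives $\overline{\rho}(\psi)\leq 1/c<\infty$, which proves the first assertion.

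Now assume additionally that $\psi$ is log-concave, so $(\log\psi)'=\psi'/\psi$ is non-increasing. To obtain \eqref{doubling-condition}, I would integrate the master estimate over $[x,2x]$ for $x\geq X$:
\begin{equation*}
\log\frac{\psi(2x)}{\psi(x)}=\int_x^{2x}\frac{\psi'(t)}{\psi(t)}\,dt\leq \int_x^{2x}\frac{dt}{ct}=\frac{\log 2}{c},
\end{equation*}
hence $\psi(2x)\leq 2^{1/c}\psi(x)$. Taking $C:=\max\{2,2^{1/c}\}\geq 2$ and $R:=X$ gives \eqref{doubling-condition}. For \eqref{psi-limsup-condition}, log-concavity is essential: it allows the one-step estimate
\begin{equation*}
\log\frac{\psi(x+1)}{\psi(x)}=\int_x^{x+1}\frac{\psi'(t)}{\psi(t)}\,dt\leq \frac{\psi'(x)}{\psi(x)}\leq \frac{1}{cx},\quad x\geq X,
\end{equation*}
so multiplying by $x$ yields $x\log(\psi(x+1)/\psi(x))\leq 1/c$, establishing the required $\limsup$ bound.

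There is no serious obstacle; the only delicate point is recognizing that log-concavity is used exclusively to replace the integral $\int_x^{x+1}\psi'/\psi$ by the endpoint value $\psi'(x)/\psi(x)$, which is what makes \eqref{psi-limsup-condition} fall out of the master estimate. The finiteness of $\rho(\psi)$ and \eqref{doubling-condition} themselves do not strictly require log-concavity, but stating them together under the log-concave hypothesis yields a unified converse to Lemma~\ref{observations-lemma}.
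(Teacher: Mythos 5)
Your proposal is correct and follows essentially the same route as the paper: both convert \eqref{liminf-condition} into the pointwise bound $\psi'(x)/\psi(x)\leq \tfrac{1}{cx}$ for large $x$ and then integrate it over $[X,x]$, $[x,2x]$ and $[x,x+1]$ respectively. The only (minor) divergence is in the doubling step: you integrate the master bound directly to get $\psi(2x)\leq 2^{1/c}\psi(x)$, which as you note does not use log-concavity at all, whereas the paper bounds $\int_x^{2x}\psi'/\psi$ by $x\psi'(x)/\psi(x)=O(1)$ using the monotonicity of $\psi'/\psi$; your observation that log-concavity is needed only for \eqref{psi-limsup-condition} is accurate and slightly sharpens the statement.
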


\begin{proof}
Since \eqref{liminf-condition} holds, there exist constants  $C>0$ and $R>0$ such that
	$$
	\frac{\psi(x)}{x\psi'(x)}\geq \frac{1}{C},\quad x\geq R,
	$$
that is,
	$$
	\frac{\psi'(x)}{\psi(x)}\leq \frac{C}{x},\quad x\geq R.
	$$
Integrating both sides, we obtain $\psi(x)\leq \psi(R)\left(\frac{x}{R}\right)^C$ for $x\geq R$, which shows that $\psi$ is of finite order of growth. 

The assumption that $\psi$ is log-concave implies that $(\log \psi(x))'=\psi'(x)/\psi(x)$ is a non-increasing function. On the other hand, the assumption \eqref{liminf-condition} implies
	$$
	\limsup_{x\to\infty}\frac{x\psi'(x)}{\psi(x)}<\infty.
	$$
By combining these two properties, we obtain
	$$
	\log\frac{\psi(2x)}{\psi(x)}=\int_x^{2x}\frac{\psi'(t)}{\psi(t)}\, dt\leq \frac{x\psi'(x)}{\psi(x)}=O(1),
	$$
from which the assertion \eqref{doubling-condition} follows. On the other hand,
	$$
	x\log\frac{\psi(x+1)}{\psi(x)}=x\int_x^{x+1}\frac{\psi'(t)}{\psi(t)}\, dt
	\leq \frac{x\psi'(x)}{\psi(x)}=O(1),
	$$
from which the assertion \eqref{psi-limsup-condition} follows.
\end{proof}

\begin{remark}
(a) For $\psi\in\D$, the generalized L'Hospital's rule \cite{Taylor} gives
	$$
	\liminf_{x\to\infty}\frac{x\psi'(x)}{\psi(x)}\leq\underline{\rho}(\psi)\leq \overline{\rho}(\psi)\leq\limsup_{x\to\infty}\frac{x\psi'(x)}{\psi(x)}.
	 $$	
Hence, if the limit $\lim_{x\to\infty}\frac{x\psi'(x)}{\psi(x)}$ exists, then
	$$
	\underline{\rho}(\psi)=\overline{\rho}(\psi)=\lim_{x\to\infty}\frac{x\psi'(x)}{\psi(x)}.
	$$

(b) If $\psi:(0,\infty)\to (0,\infty)$ has a continuous derivative for all $x>0$ large enough, and if
	$$
	\lim_{x\to\infty}\frac{x\psi'(x)}{\psi(x)}=0,
	$$
then $\psi$ is slowly varying \cite[p.~7]{Seneta}.
\end{remark}

%%%%%%%%%%%%%%%%%%%%%%%%%%%%
%%
%% SECTION 6
%%
%%%%%%%%%%%%%%%%%%%%%%%%%%%%	

\section{Generalization of Gasull-Ma\~{n}osas' theorem:\\ Research question (Q5)}

Theorem~\ref{Th3} below generalizes Theorem~\ref{Gasull} and reduces to it when $\psi(x)=x$. This answers Research question (Q5).

\begin{theorem}\label{Th3}
	Let $\psi\in \D$, and let $\{c_n\}_n$ be a positive real sequence such that $\left\{\frac{c_n}{\psi'(n)}\right\}_n$ is non-increasing and tends to zero. If the sub-signed series \eqref{Sub-signed} converges, where $A$ and $B$ are defined in \eqref{Sets}, then
	\begin{eqnarray}\label{NC}
	\lim_{n\to\infty}\frac{A_{\psi}(n)-B_{\psi}(n)}{\psi'(n)}c_n=0.
	\end{eqnarray}
In addition, if $\psi$ satisfies \eqref{asym} and \eqref{Suf1},
then $\operatorname{d}_{\psi}(A)$ exists if and only if $\operatorname{d}_{\psi}(B)$ exists, and in this case, the two densities are equal.
\end{theorem}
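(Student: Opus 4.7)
The plan is to mirror the strategy used in Theorem~\ref{Th2} via Abel's partial summation, but with the roles of the two factors swapped so as to cope with the signs carried by $m_k \in \{-1,0,1\}$. In Theorem~\ref{Th2}, the non-negativity of $\chi_A(k)\psi'(k)$ was crucial in producing a series of non-negative terms $v_n$ whose convergence forced the conclusion. For signed weights, the partial sums $S_n := A_\psi(n) - B_\psi(n) = \sum_{k=1}^n m_k\psi'(k)$ can change sign, and the direct analogue breaks. Instead I would treat $\{m_kc_k\}_k$ as the ``data''—its partial sums $\sigma_n := \sum_{k=1}^n m_kc_k$ converge by hypothesis to some limit $\sigma$—and use $b_k := \psi'(k)/c_k$ as a monotone weight. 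Since $\{c_k/\psi'(k)\}_k$ is non-increasing and tends to $0^+$, the sequence $\{b_k\}_k$ is non-decreasing and tends to $+\infty$.

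Writing $S_n = \sum_{k=1}^n (m_kc_k)\,b_k$ and applying Abel's summation formula \cite[p.~194]{Apostol} gives
\begin{equation*}
S_n = b_n\sigma_n - \sum_{k=1}^{n-1}(b_{k+1}-b_k)\sigma_k,
\end{equation*}
so that dividing through by $b_n$ yields
\begin{equation*}
\frac{c_n}{\psi'(n)}\bigl(A_\psi(n)-B_\psi(n)\bigr) = \sigma_n - \frac{1}{b_n}\sum_{k=1}^{n-1}(b_{k+1}-b_k)\sigma_k.
\end{equation*}
Since $b_{k+1}-b_k\ge 0$, $\sum_{k=1}^{n-1}(b_{k+1}-b_k) = b_n - b_1 \to \infty$, and $\sigma_k \to \sigma$, a classical Toeplitz/Silverman-type weighted-mean argument forces the average on the right to converge to $\sigma$. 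Therefore $\frac{c_n}{\psi'(n)}(A_\psi(n)-B_\psi(n)) \to \sigma - \sigma = 0$, which is precisely \eqref{NC}.

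For the additional conclusion, assume \eqref{asym} and \eqref{Suf1}. Factor
\begin{equation*}
\frac{c_n}{\psi'(n)}\bigl(A_\psi(n)-B_\psi(n)\bigr) = \frac{\psi(n)}{\psi'(n)}c_n \cdot \frac{A_\psi(n)-B_\psi(n)}{\psi(n)} \longrightarrow 0,
\end{equation*}
and invoke Lemma~\ref{lem3} with $a_n = \frac{\psi(n)}{\psi'(n)}c_n$ (which has strictly positive lim inf by \eqref{Suf1}) and $b_n = \frac{A_\psi(n)-B_\psi(n)}{\psi(n)}$ to conclude $\frac{A_\psi(n)}{\psi(n)} - \frac{B_\psi(n)}{\psi(n)} \to 0$. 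It follows that $\operatorname{d}_\psi(A)$ exists if and only if $\operatorname{d}_\psi(B)$ does, in which case the two values coincide.

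The main obstacle is conceptual rather than technical: one must recognize that the Abel splitting employed in Theorem~\ref{Th2} is tailored to non-negative summands, and must be re-oriented so that the convergent sequence $\sigma_n$ takes the stable role while the monotone weight $b_n \to \infty$ drives a Cesàro/Toeplitz averaging. Once this shift of perspective is made, the remainder of the argument is essentially a direct application of Abel's formula together with the standard weighted-mean lemma.
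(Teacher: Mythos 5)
Your proposal is correct and essentially coincides with the paper's proof: your Abel-summation identity with $b_k=\psi'(k)/c_k$ is exactly the identity of Lemma~\ref{Lem1}, your weighted-mean (Toeplitz/Silverman) step is precisely the application of Lemma~\ref{Lem2} with $c_{n,k}=\frac{c_n}{\psi'(n)}\bigl(\frac{\psi'(k+1)}{c_{k+1}}-\frac{\psi'(k)}{c_k}\bigr)$, and the final step via Lemma~\ref{lem3} under \eqref{asym} and \eqref{Suf1} is identical. No gaps.
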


Define $C:=\{n\in \mathbb{N}: m_n=0\}$. Then $\N=A\cup B\cup C$ in Theorem~\ref{Th3}. Under the assumptions of Theorem~\ref{Th3}, if one of $\operatorname{d}_{\psi}(A)$ or $\operatorname{d}_{\psi}(B)$ exists, then $\operatorname{d}_{\psi}(C)$ exists and $\operatorname{d}_{\psi}(C)=1-2\delta$, where $\delta\in [0,1/2]$ is a constant such that  
$\operatorname{d}_{\psi}(A)=\operatorname{d}_{\psi}(B)=\delta$. 

Theorem~\ref{Th3} generalizes Theorem~\ref{Th2}, since $B_{\psi}(n)=0$ holds if $m_n\in \{0,1\}$. Moreover, if \eqref{Sub-signed} represents a signed series, meaning $m_n\in \{-1,1\}$, then $A\cup B= \mathbb{N}$, resulting in
\begin{eqnarray}\label{A+B}
	A_{\psi}(n)+B_{\psi}(n)=\sum_{k=1}^n\psi'(k).
\end{eqnarray}
Under the additional assumption that $\psi$ satisfies \eqref{asym}, the assertion \eqref{NC} transforms into
\begin{eqnarray}\label{NC2}
	\frac{2A_{\psi}(n)-\psi(n)}{\psi'(n)}c_n\to 0,\quad n\to \infty.
\end{eqnarray}
These observations are summarized in the following corollary.

\begin{corollary}
Let $\psi \in \D$ satisfy \eqref{asym}, and let $\{c_n\}_n$ be a positive real sequence such that $\left\{\frac{c_n}{\psi'(n)}\right\}_n$ is non-increasing and tends to zero. If the subsets $A$ and $B$ defined in \eqref{Sets} satisfy $A\cup B=\mathbb{N}$ and \eqref{Sub-signed} converges, then \eqref{NC2} holds. Moreover, if \eqref{Suf1} holds, then $$\operatorname{d}_{\psi}(A)=\operatorname{d}_{\psi}(B)=\frac{1}{2}.$$ 
\end{corollary}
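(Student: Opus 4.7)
The proof follows directly from Theorem~\ref{Th3}, combined with the identity \eqref{A+B} that the signed-series hypothesis $A \cup B = \mathbb{N}$ forces, and a single application of Lemma~\ref{lem3}.

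First, since the hypotheses of the corollary are exactly those of Theorem~\ref{Th3}, the convergence of \eqref{Sub-signed} yields \eqref{NC}. Because $A$ and $B$ are disjoint (by their defining condition on $m_n$ in \eqref{Sets}) and $A \cup B = \mathbb{N}$, we have $\chi_A(k) + \chi_B(k) = 1$ for every $k$, which is precisely \eqref{A+B}. Substituting $B_\psi(n) = \sum_{k=1}^n \psi'(k) - A_\psi(n)$ into \eqref{NC} gives
\[
\frac{2A_\psi(n) - \sum_{k=1}^n \psi'(k)}{\psi'(n)}\, c_n \longrightarrow 0,
\]
and the assumption \eqref{asym}, namely $\sum_{k=1}^n \psi'(k) \sim \psi(n)$, allows the sum to be replaced by $\psi(n)$ to deliver \eqref{NC2}.

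For the density assertion, assume \eqref{Suf1} holds, and apply Lemma~\ref{lem3} with
\[
a_n = \frac{\psi(n)}{\psi'(n)} c_n \quad \text{and} \quad b_n = \frac{2A_\psi(n) - \psi(n)}{\psi(n)}.
\]
Then $a_n b_n \to 0$ by \eqref{NC2}, while $\liminf_{n\to\infty} a_n > 0$ by \eqref{Suf1}, so Lemma~\ref{lem3} gives $b_n \to 0$, i.e.\ $A_\psi(n)/\psi(n) \to 1/2$. Combined with the identity $\frac{A_\psi(n) + B_\psi(n)}{\psi(n)} = \frac{\sum_{k=1}^n \psi'(k)}{\psi(n)} \to 1$ coming from \eqref{A+B} and \eqref{asym}, we also obtain $B_\psi(n)/\psi(n) \to 1/2$. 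Hence $\operatorname{d}_\psi(A) = \operatorname{d}_\psi(B) = 1/2$.

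The only delicate point in this plan is the passage from $\frac{2A_\psi(n) - \sum_{k=1}^n \psi'(k)}{\psi'(n)}c_n \to 0$ to \eqref{NC2}, where one must verify that the residual $\bigl(\sum_{k=1}^n \psi'(k) - \psi(n)\bigr) c_n/\psi'(n)$ is negligible; this is the expected main obstacle. If a direct estimate of this residual is awkward for general $\psi \in \D$ satisfying \eqref{asym}, it can be bypassed entirely by applying Lemma~\ref{lem3} to \eqref{NC} itself, with $b_n = (A_\psi(n) - B_\psi(n))/\psi(n)$, to obtain $A_\psi(n) - B_\psi(n) = o(\psi(n))$; combining this with $A_\psi(n) + B_\psi(n) \sim \psi(n)$ then yields $\operatorname{d}_\psi(A) = \operatorname{d}_\psi(B) = 1/2$ without ever invoking \eqref{NC2}.
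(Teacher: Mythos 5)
Your route is the paper's route: the corollary is presented there as a summary of the observations immediately preceding it, namely \eqref{A+B} (from disjointness of $A,B$ and $A\cup B=\N$), the substitution of $\psi(n)$ for $\sum_{k=1}^n\psi'(k)$ via \eqref{asym} to turn \eqref{NC} into \eqref{NC2}, and, for the densities, the argument inside the proof of Theorem~\ref{Th3} (Lemma~\ref{lem3} with $a_n=\psi(n)c_n/\psi'(n)$) combined with $A_\psi(n)+B_\psi(n)\sim\psi(n)$. Your treatment of the density claim is complete and correct; note that working through Lemma~\ref{lem3} (equivalently, the $o(\psi(n))$ statement established in the proof of Theorem~\ref{Th3}) is genuinely needed here, since the literal second assertion of Theorem~\ref{Th3}, that $\operatorname{d}_\psi(A)$ exists if and only if $\operatorname{d}_\psi(B)$ exists, would not by itself give existence of either density.

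The delicate point you flag is real, and you are right not to wave it away: passing from $\bigl(2A_\psi(n)-\sum_{k=1}^n\psi'(k)\bigr)c_n/\psi'(n)\to 0$ to \eqref{NC2} requires $\bigl(\sum_{k=1}^n\psi'(k)-\psi(n)\bigr)c_n/\psi'(n)\to 0$, and \eqref{asym} alone only makes this error $o(\psi(n))$, which need not be annihilated by the factor $c_n/\psi'(n)$ when $\psi'$ oscillates badly; the paper performs this substitution silently, so on this point you are more careful than the source. Two observations close the gap in the cases the paper has in mind: since $A\cup B=\N$, every $m_n=\pm1$, so convergence of \eqref{Sub-signed} forces $c_n=|m_nc_n|\to 0$; and when $\psi'$ is monotone (i.e.\ $\psi\in\D_1\cup\D_2$), a sum--integral comparison as in Lemma~\ref{lem2} gives $\bigl|\sum_{k=1}^n\psi'(k)-\psi(n)\bigr|\le \psi'(1)+\psi'(n)+\psi(1)$, so the residual is at most $c_n+O\bigl(c_n/\psi'(n)\bigr)\to 0$. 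For a completely general $\psi\in\D$ satisfying only \eqref{asym} the step is not automatic, and your fallback --- applying Lemma~\ref{lem3} directly to \eqref{NC} with $b_n=(A_\psi(n)-B_\psi(n))/\psi(n)$ and then using $A_\psi(n)+B_\psi(n)\sim\psi(n)$ --- is the correct way to secure $\operatorname{d}_\psi(A)=\operatorname{d}_\psi(B)=\tfrac12$ independently of \eqref{NC2}.
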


The proof of Theorem~\ref{Th3} is based on two auxiliary lemmas.

\begin{lemma}\label{Lem1}
Let $\{c_n\}_n$ be a non-vanishing real sequence, let $\{m_n\}_n$ be a sequence with $m_n \in\{-1,0,1\}$, and let $P_n=\sum_{k=1}^n m_k a_k$ be the corresponding partial sum. Then for any $\psi\in \mathcal{D}$, we have
	\begin{eqnarray}
	P_n&=&	\frac{A_{\psi}(n)-B_{\psi}(n)}{\psi'(n)}c_n+\frac{c_n}{\psi'(n)}\sum_{k=1}^{n-1}\left(\frac{\psi'(k+1)}{c_{k+1}}-\frac{\psi'(k)}{c_k} \right) P_k\label{id1}
	\end{eqnarray}
\end{lemma}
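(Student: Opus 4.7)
The plan is to recognize the stated identity as a consequence of Abel's partial summation formula (summation by parts) applied to the sum defining $Q_n := A_\psi(n) - B_\psi(n)$. Since $A$ and $B$ are disjoint and $m_k = \chi_A(k) - \chi_B(k)$, we have
$$Q_n = \sum_{k=1}^n \psi'(k)\,\big(\chi_A(k) - \chi_B(k)\big) = \sum_{k=1}^n \psi'(k)\, m_k.$$
The key algebraic move is to express $m_k$ in terms of successive differences of the partial sums $P_k$.

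First I would set $P_0 := 0$, so that $m_k c_k = P_k - P_{k-1}$ for every $k \geq 1$. Because $\{c_n\}_n$ is non-vanishing, this rearranges to $m_k = (P_k - P_{k-1})/c_k$, and substituting into the definition of $Q_n$ gives
$$Q_n = \sum_{k=1}^n \frac{\psi'(k)}{c_k}\,(P_k - P_{k-1}).$$
Applying Abel's summation formula (or equivalently telescoping with a shift of index) to the right-hand side with the weights $r_k := \psi'(k)/c_k$ yields
$$Q_n = r_n P_n - \sum_{k=1}^{n-1} (r_{k+1} - r_k)\, P_k
= \frac{\psi'(n)}{c_n}\, P_n - \sum_{k=1}^{n-1}\left(\frac{\psi'(k+1)}{c_{k+1}} - \frac{\psi'(k)}{c_k}\right) P_k.$$

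Finally, I would solve this equation for $P_n$ by moving the sum to the other side and multiplying through by $c_n/\psi'(n)$, which produces exactly \eqref{id1}. The derivation is purely formal, depending only on $c_k \neq 0$ and on the telescoping identity; no monotonicity or convergence hypothesis is used at this stage, which is appropriate since Lemma~\ref{Lem1} is meant to serve as a general-purpose transformation to be combined later with the assumptions of Theorem~\ref{Th3}. The only potential obstacle is bookkeeping with indices, in particular the boundary term at $k=n$ and the convention $P_0 = 0$, but there are no analytic difficulties.
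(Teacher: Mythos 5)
Your proposal is correct and follows essentially the same route as the paper: write $A_\psi(n)-B_\psi(n)=\sum_{k=1}^n m_k\psi'(k)$, substitute $m_k=(P_k-P_{k-1})/c_k$ (using $c_k\neq 0$), apply Abel summation with the weights $\psi'(k)/c_k$, and solve for $P_n$. The only cosmetic difference is that the paper verifies $\chi_A(k)-\chi_B(k)=m_k$ via the formulas $\chi_A(k)=\tfrac{m_k(m_k+1)}{2}$ and $\chi_B(k)=\tfrac{m_k(m_k-1)}{2}$, which you take as immediate.
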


\begin{proof}
	We begin by expressing $m_1=\frac{P_1}{c_1}$ and $m_k=\frac{P_k-P_{k-1}}{c_k}$ for $k\geq 2$. Observe that
	$$
	A_{\psi}(n)=\sum_{k=1}^n\psi'(k)\chi_A(k)=\sum_{k=1}^n\frac{m_k(m_k+1)}{2}\psi'(k)
	$$
	and
	$$
	B_{\psi}(n)=\sum_{k=1}^n\psi'(k)\chi_B(k)=\sum_{k=1}^n\frac{m_k(m_k-1)}{2}\psi'(k).
	$$
	This yields
	\begin{eqnarray}
		A_{\psi}(n)-B_{\psi}(n)&=&\sum_{k=1}^nm_k\psi'(k)\nonumber \\
		&=&\frac{P_1}{c_1}\psi'(1)+\sum_{k=2}^n\frac{P_k-P_{k-1}}{c_k}\psi'(k)\nonumber\\
		&=&\frac{P_n}{c_n}\psi'(n)-\sum_{k=1}^{n-1}\left(\frac{\psi'(k+1)}{c_{k+1}}-\frac{\psi'(k)}{c_k} \right) P_k,\nonumber
	\end{eqnarray}
	which concludes the proof of \eqref{id1}. 
\end{proof}

The following lemma is a special case of Toeplitz's theorem, and it plays a key role in proving Theorem~\ref{Th3}.

\begin{lemma}  \label{Lem2}
\textnormal{\cite[Theorem 2.1]{GM}}
	Let $\{c_{n, m}\}_{(n,m)}$ be a double sequence satisfying the following properties.
	\begin{itemize}
		\item[\textnormal{(a)}] For all $(n, m) \in \mathbb{N} \times \mathbb{N}$, $c_{n, m} \geq 0$  and $c_{n, m}=0$ when $m>n$.
		\item[\textnormal{(b)}] $\lim\limits_{n\to \infty} \sum_{k=1}^{n}c_{n,k}=1$.
		\item[\textnormal{(c)}] For any fixed $m \in \mathbb{N}$, $\lim\limits_{n\to \infty} c_{n, m}=0$.
	\end{itemize}
Then, for any real sequence $\{x_n\}$, the sequence $\{y_n\}$ defined by $y_n=\sum_{k=1}^n c_{n, k} x_k$ satisfies
	$$
	\liminf_{n\to \infty} x_n \leq \liminf_{n\to \infty} y_n \leq \limsup_{n\to \infty} y_n \leq \limsup_{n\to \infty} x_n.
	$$
\end{lemma}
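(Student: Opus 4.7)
The plan is to reduce the two-sided conclusion to a single one-sided bound, namely $\limsup_{n\to\infty} y_n \leq \limsup_{n\to\infty} x_n$. Once this is established, the lower inequality $\liminf_{n\to\infty} x_n \leq \liminf_{n\to\infty} y_n$ follows by applying the same upper bound to the sequence $\{-x_n\}_n$, since the weighted combination defining $y_n$ becomes $-y_n$ and $\limsup$ swaps with $-\liminf$. The middle inequality $\liminf_{n\to\infty} y_n \leq \limsup_{n\to\infty} y_n$ is automatic. So the whole content of the lemma reduces to one Silverman--Toeplitz style estimate.

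For that estimate, set $U := \limsup_{n\to\infty} x_n$ and assume $U < \infty$, since otherwise the bound is vacuous. Fix $\varepsilon > 0$ and choose $M \in \mathbb{N}$ such that $x_k < U + \varepsilon$ for every $k \geq M$. The natural next move is the head/tail decomposition
\[
y_n \;=\; \sum_{k=1}^{M-1} c_{n,k}\, x_k \;+\; \sum_{k=M}^{n} c_{n,k}\, x_k, \qquad n \geq M.
\]
By (c), each of the finitely many terms in the first sum tends to $0$ as $n\to\infty$, so the head contributes $0$ in the limit. For the tail, the non-negativity of the weights in (a) allows me to apply the uniform bound $x_k < U + \varepsilon$ pointwise, giving
\[
\sum_{k=M}^{n} c_{n,k}\, x_k \;\leq\; (U+\varepsilon)\sum_{k=M}^{n} c_{n,k},
\]
and the quantity $\sum_{k=M}^{n} c_{n,k}$ tends to $1$ as $n\to\infty$ upon combining (b) with (c) applied to the finitely many indices $k<M$.

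Taking $\limsup_{n\to\infty}$ on both sides of the head/tail decomposition then yields $\limsup_{n\to\infty} y_n \leq U + \varepsilon$, and letting $\varepsilon \to 0^+$ closes the argument. There is no real obstacle here: the one point meriting attention is that $U+\varepsilon$ can have either sign, but because every $c_{n,k}$ is non-negative the displayed tail bound is valid regardless of the sign of the right-hand side, so no case analysis is required. The argument is just the classical Silverman--Toeplitz regularity criterion specialised to triangular non-negative weight matrices, which is exactly the content indexed by conditions (a)--(c).
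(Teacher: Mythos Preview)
Your argument is correct. It is the standard Silverman--Toeplitz regularity proof: split $y_n$ into a finite head (killed by~(c)) and a tail (controlled by~(a) and~(b)), then let $\varepsilon\to 0$; the symmetry $x_n\mapsto -x_n$ takes care of the $\liminf$ side. One small remark: when you pass to the $\limsup$ at the end, you are implicitly using $\limsup(a_n+b_n)\leq \limsup a_n+\limsup b_n$ with $a_n\to 0$ and $b_n$ bounded above by a convergent sequence, which is unproblematic here since no indeterminate form arises; it would do no harm to say this explicitly.

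As for comparison with the paper: there is nothing to compare. The paper does not prove Lemma~\ref{Lem2}; it quotes it verbatim from \cite[Theorem~2.1]{GM} and uses it as a black box in the proof of Theorem~\ref{Th3}. Your write-up therefore supplies a self-contained justification that the paper omits.
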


We are now ready for the proof of Theorem~\ref{Th3}.

\begin{proof}[Proof of Theorem~\ref{Th3}]
Recalling \eqref{id1} from Lemma~\ref{Lem1}, we have
\begin{eqnarray}\label{Id1'}
		\frac{A_{\psi}(n)-B_{\psi}(n)}{\psi'(n)}c_n=P_n-\frac{c_n}{\psi'(n)}\sum_{k=1}^{n-1}\left(\frac{\psi'(k+1)}{c_{k+1}}-\frac{\psi'(k)}{c_k} \right) P_k.
\end{eqnarray}
Set
	\[
	c_{n, k} = 
	\begin{cases}
		\frac{c_n}{\psi'(n)}\left( \frac{\psi'(k+1)}{c_{k+1}} - \frac{\psi'(k)}{c_k} \right), & \text{if } k < n, \\
		0, & \text{otherwise.}
	\end{cases}
	\]
	Clearly, $c_{n,k}\geq 0$ follows from the fact that $\left\{\frac{c_n}{\psi'(n)}\right\}_n $ is non-increasing. Moreover,
	$$
	\sum_{k=1}^{n-1}c_{n,k}=1-\frac{\psi'(1)}{c_1}\frac{c_n}{\psi'(n)}\to 1,\quad n\to \infty,
	$$
	and for any fixed $k$, we have $\lim\limits_{n\to \infty}c_{n,k}=0$. Thus, by applying Lemma~\ref{Lem2}, if $\{P_n\}_n$ converges to $P$, then
	$$
	\sum_{k=1}^{n} c_{n,k}P_k	=\frac{c_n}{\psi'(n)}\sum_{k=1}^{n-1}\left(\frac{\psi'(k+1)}{c_{k+1}}-\frac{\psi'(k)}{c_k} \right) P_k \to P,\quad n\to \infty.
	$$
This, together with \eqref{Id1'} yields \eqref{NC}. 

Assuming that \eqref{Suf1} holds, we may use Lemma~\ref{lem3}, with
	$$
	a_n=\frac{\psi(n)}{\psi'(n)}c_n \quad \text{and}\quad b_n=\frac{A_{\psi}(n)-B_{\psi}(n)}{\psi(n)}
	$$
to conclude that 
	$$
	\lim_{n\to\infty} \left( \frac{A_{\psi}(n)}{\psi(n)} -\frac{B_{\psi}(n)}{\psi(n)}\right) =0.
	$$
This completes the proof of Theorem~\ref{Th3} via assuming \eqref{asym}.
\end{proof}

The subsequent result extends Theorem~\ref{Salat2-thm} within the context of sub-signed series.

\begin{theorem}
	Let $\psi\in\D$ satisfy \eqref{asym}, and let $\{c_n\}_n$ be a non-negative real sequence satisfying \eqref{Salat-condition30}. If the sub-signed series \eqref{Sub-signed} converges, where $A$ and $B$ are defined in \eqref{Sets}, then the following assertions hold.
	\begin{itemize}
		\item[\textnormal{(a)}] If $\left\{ \frac{c_n}{\psi'(n)}\right\}_n $ is non-increasing, then
	\begin{eqnarray}\label{inq3}
		\liminf_{n\to\infty} \frac{A_{\psi}(n)-B_{\psi}(n)}{\psi(n)}=0.
	\end{eqnarray}
	In particular, if $A\cup B=\mathbb{N}$, then 	
	$$
\max \left\lbrace 	\underline{	\operatorname{d}}_{\psi}(A), 	\underline{	\operatorname{d}}_{\psi}(B) \right\rbrace \leq \frac{1}{2} \leq \min \left\lbrace \overline{	\operatorname{d}}_{\psi}(A), 	\overline{	\operatorname{d}}_{\psi}(B)\right\rbrace .
	$$
		\item[\textnormal{(b)}] If $\left\{ \frac{c_n}{\psi'(n)}\right\}_n $ is non-decreasing, then
	\begin{eqnarray}\label{inq4}
		\lim_{n\to\infty} \frac{A_{\psi}(n)-B_{\psi}(n)}{\psi(n)}=0.
	\end{eqnarray}
		In particular, if $A\cup B=\mathbb{N}$, then $\operatorname{d}_{\psi}(A)=\operatorname{d}_{\psi}(B)=\frac{1}{2}.$
	\end{itemize}
\end{theorem}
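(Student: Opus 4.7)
The plan is to mirror the proof of Theorem~\ref{Salat2-thm} almost verbatim, replacing the characteristic sequence $\chi_A$ with the signed variant $s_n := \chi_A(n)-\chi_B(n) \in \{-1,0,1\}$. Rajagopal's theorem (Theorem~\ref{L1}) makes no sign requirement on $\{s_n\}$, so the same machinery applies directly. The driving observation for both parts is that convergence of the sub-signed series \eqref{Sub-signed} together with \eqref{Salat-condition30} gives
$$\frac{\sum_{k=1}^n(\chi_A(k)-\chi_B(k))c_k}{\sum_{k=1}^n c_k}\longrightarrow 0,\qquad n\to\infty,$$
by the standard convergent-over-divergent comparison. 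Note also that $\sum_{k=1}^n\psi'(k)s_k = A_\psi(n)-B_\psi(n)$, while \eqref{asym} gives $\sum_{k=1}^n\psi'(k)\sim\psi(n)$.

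For part (a), apply Theorem~\ref{L1} with $a_n=c_n$, $b_n=\psi'(n)$, and $s_n=\chi_A(n)-\chi_B(n)$; the hypothesis that $\{c_n/\psi'(n)\}_n$ is non-increasing is exactly Rajagopal's monotonicity condition on $\{a_n/b_n\}_n$. The displayed limit above reads $\lim_{n\to\infty}\sigma(s_n,a_n)=0$, and Rajagopal's chain of inequalities therefore squeezes the middle to yield
$$\liminf_{n\to\infty}\frac{A_\psi(n)-B_\psi(n)}{\psi(n)}\leq 0\leq \limsup_{n\to\infty}\frac{A_\psi(n)-B_\psi(n)}{\psi(n)},$$
which is the content of \eqref{inq3}. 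When $A\cup B=\mathbb{N}$ (so $A$ and $B$ partition $\mathbb{N}$ by construction), the identity $A_\psi(n)+B_\psi(n)=\sum_{k=1}^n\psi'(k)\sim\psi(n)$ gives $(A_\psi(n)-B_\psi(n))/\psi(n)=2A_\psi(n)/\psi(n)-1+o(1)$, and feeding this into the previous bounds yields $\underline{\operatorname{d}}_\psi(A)\leq 1/2\leq \overline{\operatorname{d}}_\psi(A)$ and, by the symmetry $A\leftrightarrow B$, the same for $B$, which is the claimed inequality.

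For part (b) the monotonicity is reversed: $\{c_n/\psi'(n)\}_n$ non-decreasing is the same as $\{\psi'(n)/c_n\}_n$ non-increasing. Apply Theorem~\ref{L1} with the roles of $a_n$ and $b_n$ swapped, taking $a_n=\psi'(n)$ and $b_n=c_n$, so that $\{a_n/b_n\}$ is non-increasing. Now $\sigma(s_n,b_n)\to 0$ lies at the outside of Rajagopal's chain, which forces $\lim_{n\to\infty}\sigma(s_n,a_n)=0$; dividing by $\psi(n)$ via \eqref{asym} this is precisely \eqref{inq4}. When $A\cup B=\mathbb{N}$, the identity $(A_\psi-B_\psi)/\psi=2A_\psi/\psi-1+o(1)$ then gives $A_\psi(n)/\psi(n)\to 1/2$, i.e., $\operatorname{d}_\psi(A)=\operatorname{d}_\psi(B)=1/2$. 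The only point requiring care is orienting Rajagopal's monotonicity hypothesis correctly in each case; otherwise the argument is a direct signed analogue of the proof of Theorem~\ref{Salat2-thm}, and no genuine obstacle arises.
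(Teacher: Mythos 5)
Your proposal follows essentially the same route as the paper's proof: part (a) applies Rajagopal's Theorem~\ref{L1} with $a_n=c_n$, $b_n=\psi'(n)$, $s_n=m_n=\chi_A(n)-\chi_B(n)$, part (b) with the roles of $c_n$ and $\psi'(n)$ swapped, and both are combined with the limit \eqref{lim1}, the asymptotics \eqref{asym} and the identity \eqref{A+B}, exactly as in the paper. Note only that in case (a) the Rajagopal chain places the $c$-averages in the middle, so (as in the paper's own argument) what one actually obtains is the bracketing $\liminf_{n\to\infty}\frac{A_\psi(n)-B_\psi(n)}{\psi(n)}\leq 0\leq \limsup_{n\to\infty}\frac{A_\psi(n)-B_\psi(n)}{\psi(n)}$ --- which is what is used for the density inequalities --- rather than literally the equality displayed in \eqref{inq3}.
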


\begin{proof}
(a) Recall that
	$$
	\frac{A_{\psi}(n)-B_{\psi}(n)}{\sum_{k=1}^n\psi'(k)}=\frac{\sum_{k=1}^nm_k\psi'(k)}{\sum_{k=1}^n\psi'(k)}.
	$$
Choosing $a_n = c_n$, $b_n = \psi'(n)$ and $s_n = m_n$ in Theorem~\ref{L1}, we obtain
\begin{eqnarray}\label{Raj1}
	\liminf_{n\to\infty}\frac{\sum_{k=1}^nm_k\psi'(k)}{\sum_{k=1}^n\psi'(k)} &\leq& \liminf_{n\to\infty}\frac{\sum_{k=1}^nm_kc_k}{\sum_{k=1}^nc_k} \nonumber\\
	&\leq& \limsup_{n\to\infty}\frac{\sum_{k=1}^nm_kc_k}{\sum_{k=1}^nc_k} \leq \limsup_{n\to\infty}\frac{\sum_{k=1}^nm_k\psi'(k)}{\sum_{k=1}^n\psi'(k)}.
\end{eqnarray}
Hence the assertion \eqref{inq3} follows from \eqref{asym}, \eqref{Raj1} and
	\begin{eqnarray}\label{lim1}
	\lim_{n\to\infty}\frac{\sum_{k=1}^nm_kc_k}{\sum_{k=1}^nc_k}=0.
	\end{eqnarray}
The second assertion in (a) follows from \eqref{asym}, \eqref{A+B} and \eqref{Raj1}.

(b) Suppose now that $\left\{\frac{c_n}{\psi'(n)}\right\}_n$ is non-decreasing, in which case $\left\{\frac{\psi'(n)}{c_n}\right\}_n$ is non-increasing. Using Theorem~\ref{L1} again, we obtain
	\begin{eqnarray*}\label{Raj2}
	\liminf_{n\to\infty}\frac{\sum_{k=1}^nm_kc_k}{\sum_{k=1}^nc_k}	 &\leq& \liminf_{n\to\infty}\frac{\sum_{k=1}^nm_k\psi'(k)}{\sum_{k=1}^n\psi'(k)} \nonumber\\
	&\leq&  \limsup_{n\to\infty}\frac{\sum_{k=1}^nm_k\psi'(k)}{\sum_{k=1}^n\psi'(k)}\leq\limsup_{n\to\infty}\frac{\sum_{k=1}^nm_kc_k}{\sum_{k=1}^nc_k} .
	\end{eqnarray*}
Then, by making use of \eqref{lim1} again, the assertion \eqref{inq4} follows. The second assertion in (b) follows from \eqref{asym}, \eqref{A+B} and \eqref{inq4}.
\end{proof}

\appendix

%%%%%%%%%%%%%%%%%%%%%%%%%%%%%%%%%%%%%%
%%
%% Appendix A
%%
%%%%%%%%%%%%%%%%%%%%%%%%%%%%%%%%%%%%%%

\section{Proofs of Propositions~\ref{Salat-example} and \ref{sharpness-prop}}\label{App-A}

\noindent
\emph{Proof of Proposition~\ref{Salat-example}}. For $n\in\N$, define
	$$
	c_{n^{n}+k(n)}:=\frac{1}{n^{n+2}},
	$$
where $k(n)$ is an integer satisfying $0 \leq k(n)\leq (n+1)^{n+1}-n^n-1$. That is, 
	$$
	c_1=c_2=c_3=1, \quad c_4=\ldots=c_{26}=\frac{1}{2^4}, \quad  c_{27}=\ldots=c_{255}=\frac{1}{3^5},
	$$
and so on. Therefore,
	$$
	\sum_{n=1}^{\infty}c_n=\sum_{n=1}^{\infty} \sum_{k=0}^{\Delta n^n -1} c_{n^n+k}=\sum_{n=1}^{\infty}\frac{(n+1)^{n+1}-n^n}{n^{n+2}}.
	$$
The series $\sum_{n=1}^{\infty} c_n$ diverges, due to the fact that
	\begin{equation}\label{not-right}
	(n+1)^{n+1}-n^n>(e-1) n^{n+1},\quad n\in\N.
	\end{equation}
To see that \eqref{not-right} holds, we write
	$$
	\frac{(n+1)^{n+1}-n^n}{n^{n+1}}=\left( 1+\frac{1}{n}\right) ^{n+1}-\frac{1}{n}\geq \left( 1+\frac{1}{n}\right) ^{n+1}-1.
	$$
This together with the fact
	$
	\left(1+\frac{1}{n}\right)^n<e<\left(1+\frac{1}{n}\right)^{n+1}
	$
result in \eqref{not-right}.

Given $\delta\in (0,1)$, let $a=\frac{1}{1-\delta}$, and define the set $A\subset\N$ in the assertion by
	$$
	A:=\{n^n+k(n):0\leq k(n)\leq (a-1)n^n\}
	=\N\cap \bigcup_{n\geq 1} \left[n^n,an^n \right].
	$$  
Using \eqref{not-right}, we see that $(n+1)^{n+1}-n^n>n^{n+1}\geq (a-1)n^n$ for every $n\geq a-1$, from which $an^n<(n+1)^{n+1}$ for every $n\geq a-1$. Hence the intervals $\left[n^n,an^n \right]$ are pairwise disjoint for every $n\geq m:=\lceil a-1\rceil$.
Moreover,
	\begin{equation*}%\label{converges-A}
	\begin{split}
	\sum_{n=1}^{\infty}	\chi_A(n)c_n&=O(1)+\sum_{n=m}^{\infty}	\chi_A(n)c_n
	=O(1)+\sum_{n=m}^{\infty} \sum_{k=0}^{\Delta n^n -1}\chi_A(n^n+k)c_{n^n+k}\\
	&=O(1)+\sum_{n=m}^{\infty}\frac{(a-1)n^n+1}{n^{n+2}}
	\leq O(1)+a \sum_{n=1}^{\infty}\frac{1}{n^2}<\infty.
	\end{split}	
	\end{equation*}
Thus it remains to prove that $\overline{\operatorname{d}}(A)\geq \delta$ and $\underline{\operatorname{d}}(A)=0.$
To this end, we have
	\begin{equation*}%\label{2nn}
	A(an^n)\geq\sum_{k=m}^{n}\sum_{j=k^k}^{\lfloor ak^k\rfloor}1\geq\sum_{k=m}^{n}(a-1)k^k\geq (a-1)n^n,
	\end{equation*}
and consequently
	$$
	\frac{A(an^n)}{an^n}\geq \frac{a-1}{a}=\delta\ \implies \ \overline{\operatorname{d}}(A)\geq \delta.
	$$
Meanwhile,
	\begin{eqnarray*}
	A((n+1)^{n+1}-1)
&\leq&	\sum_{k=1}^{n}\sum_{j=k^k}^{\lceil ak^k\rceil}1\leq\sum_{k=1}^{n}((a-1)k^k+2)
\leq a\sum_{k=1}^{n}k^k+2n\\
&\leq & a\sum_{k=1}^n n^k +2n=an\cdot\frac{n^n-1}{n-1}+2n\leq 2an^{n}+2n,
	\end{eqnarray*}
and consequently
	$$
	\frac{A((n+1)^{n+1}-1)}{(n+1)^{n+1}-1} \leq \frac{2an^n+2n}{(n+1)^{n+1}-1}\sim\frac{2a}{e(n+1)}\ \implies\  \underline{\operatorname{d}}(A)=0.
	$$
This completes the proof of Proposition~\ref{Salat-example}. \hfill$\Box$

\medskip
\noindent
\emph{Proof of Proposition~\ref{sharpness-prop}}.
Let $\delta\in (0,1)$, and let $\{c_n\}_n$ and $A\subset\N$ be as in the proof of Proposition~\ref{Salat-example}. Define $\psi(x)=\frac{x}{\log\log (x+e^e)}$. Since
	$$
	\psi'(x)=\frac{\log\log(x+e^e)-\frac{x}{(x+e^e)\log(x+e^e)}}{(\log\log(x+e^e))^2},
	$$
it is easy to see that $\psi\in\D_1$.
We need to prove that
	$$
	\sum_{n=1}^\infty \psi'(n)c_n=\infty,\quad \sum_{n=1}^{\infty}\chi_A(n)c_n<\infty,\quad \overline{\operatorname{d}}_\psi(A)\geq \delta.
	$$
For a fixed $\alpha>1$, the equation
	$$
	\log\log\left(x+e^e\right)-1=\frac{1}{\alpha}\log\log\left(x+e^e\right)
	$$
has a unique solution $x_0:=\exp\left(e^\frac{\alpha}{\alpha-1}-e^e\right)$. We have
	\begin{equation}\label{psi-prime-down}
	\psi'(x)>\frac{\log\log(x+e^e)-1}{(\log\log(x+e^e))^2}
	>\left\{
\begin{array}{ll}
	0,\ & x>0,\\
	\frac{1}{\alpha\log\log(x+e^e)},\ & x>x_0,\end{array}\right.
	\end{equation}
and
	$$
	\sum_{n=1}^{\infty}\psi'(n)c_n 
	= \sum_{n=1}^{\infty} \sum_{k=0}^{\Delta n^n -1}\psi'(n^n+k)c_{n^n+k}
	=\sum_{n=1}^{\infty} \sum_{k=0}^{\Delta n^n -1}\frac{\psi'(n^n+k)}{n^{n+2}}. 
	$$
Using \eqref{not-right}, \eqref{psi-prime-down} and the fact that $\psi'$ is non-increasing, we obtain
	\begin{eqnarray*}
	\sum_{n=1}^{\infty}\psi'(n)c_n&\geq& (e-1)\sum_{n=1}^{\infty}\frac{\psi'((n+1)^{n+1})}{n}\\
	&\geq& \frac{e-1}{\alpha}\sum_{n\geq M}^{\infty}\frac{1}{n\log\log((n+1)^{n+1}+e^e)},
	\end{eqnarray*}
where $M$ is an integer satisfying $(M+1)^{M+1}>x_0$. We see that $\sum_{n=1}^{\infty}\psi'(n)c_n$ is divergent. Further, since $\{c_n\}_n$ and $A\subset\N$ are the same as in the proof of Proposition~\ref{Salat-example},
we have $\sum_{n=1}^{\infty}\chi_A(n)c_n<\infty$. 

It remains to prove that $\overline{\operatorname{d}}_\psi(A)\geq \delta$. Let $a:=\frac{1}{1-\delta}$, and let $m\geq \lceil a-1\rceil$ be an integer satisfying $m^m\geq x_0$.
Similarly as in the proof of Proposition~\ref{Salat-example}, for $n\geq m$, we obtain
	\begin{eqnarray*}
	A_\psi(an^n)&\geq &\sum_{k=m}^{\lfloor an^n\rfloor}\psi'(k)\chi_A(k)=\sum_{k=m}^{n}\sum_{j=k^k}^{\lfloor ak^k\rfloor}\psi'(j)\\
	&\geq& \sum_{k=1}^{n}\frac{(a-1)k^k}{\alpha\log\log(\lfloor ak^k\rfloor+e^e)}
	\geq \frac{(a-1)n^n}{\alpha\log\log( an^n+e^e)}.
	\end{eqnarray*}
Consequently, for $n\geq m$,
	$$
	\frac{A_\psi(an^n)}{\psi(an^n)}\geq \frac{a-1}{\alpha a}=\frac{\delta}{\alpha}\ \implies \ \overline{\operatorname{d}}_\psi(A)\geq \frac{\delta}{\alpha}.
	$$
Letting $\alpha\to 1^+$, we get $\overline{\operatorname{d}}_\psi(A)\geq \delta$. This completes the proof of Proposition~\ref{sharpness-prop}.\hfill$\Box$

%%%%%%%%%%%%%%%%%%%%%%%%%%%%%%%%%%%%%%
%%
%% Appendix B
%%
%%%%%%%%%%%%%%%%%%%%%%%%%%%%%%%%%%%%%%

\section{Second proof of Theorem~\ref{Salat2-thm}.}\label{App-B}

(a) We prove the assumption by assuming that $\psi\in\D$ is log-concave and satisfies \eqref{asym}. It suffices to prove that if
	\begin{equation}\label{positive-upper-psi}
	\underline{\operatorname{d}}_{\psi}(A)=\liminf_{n\to\infty}\frac{A_\psi(n)}{\psi(n)}>0,
	\end{equation}
then the series in \eqref{subseries} diverges. Hence, suppose that \eqref{positive-upper-psi} holds. Then there exist constants $\delta>0$ and $N\in \N$ such that
	$$
	\sum_{k=1}^n\psi'(k)\chi_A(k)=A_\psi(n)>\delta\psi(n),\quad n\geq N.
	$$
By Abel's partial summation formula \cite[p.~194]{Apostol}, and by the assumption that the sequence $\left\{\frac{c_n}{\psi'(n)}\right\}_n$ is non-increasing, we obtain 
	\begin{eqnarray*}
	\sum_{k=1}^n \chi_A(k)c_k 
	&=& \sum_{k=1}^n\psi'(k)\chi_A(k)\frac{c_k}{\psi'(k)}\\
	&\overset{\text{Abel}}{=}& A_\psi(n)\frac{c_{n+1}}{\psi'(n+1)}+\sum_{k=1}^nA_\psi(k)\left(\frac{c_k}{\psi'(k)}-\frac{c_{k+1}}{\psi'(k+1)}\right)\\
	&\geq& \delta\psi(n)\frac{c_{n+1}}{\psi'(n+1)}+\delta\sum_{k=N}^n\psi(k)\left(\frac{c_k}{\psi'(k)}-\frac{c_{k+1}}{\psi'(k+1)}\right)\\
	&=& \delta\psi(n)\frac{c_{n+1}}{\psi'(n+1)}+\delta\sum_{k=N}^n\psi(k)\frac{c_k}{\psi'(k)}-\delta\sum_{k=N+1}^{n+1}\psi(k-1)\frac{c_k}{\psi'(k)}\\
	&=& \delta\psi(N)\frac{c_N}{\psi'(N)}+\delta\sum_{k=N+1}^n\frac{\psi(k)-\psi(k-1)}{\psi'(k)}c_k\\
	&\geq & \delta\sum_{k=N+1}^n\frac{\psi(k)-\psi(k-1)}{\psi'(k)}c_k,\quad n\geq N+1.
	\end{eqnarray*}
The fact that $\sum_{k=1}^n \chi_A(k)c_k$ diverges follows by the assumption \eqref{Salat-condition30} and the following lemma.

\begin{lemma}\label{log-concave-lemma}
If $\psi\in\D$ is log-concave, then
	$$
	\inf_{x>1}\frac{\psi(x)-\psi(x-1)}{\psi'(x)}>0.
	$$
\end{lemma}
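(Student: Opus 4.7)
The plan is to convert log-concavity into a pointwise comparison between $\psi(x)$ and $\psi(x-1)$, and then reduce the infimum to the study of a single elementary function. Since $\log\psi$ is concave and differentiable, it lies below each of its tangent lines; applying the tangent at $x$ to the point $x-1$ gives
$$
\log\psi(x-1)\le \log\psi(x)-u(x),\qquad u(x):=\frac{\psi'(x)}{\psi(x)}.
$$
Exponentiating, subtracting, and dividing by $\psi'(x)=u(x)\psi(x)$ yields the clean inequality
$$
\frac{\psi(x)-\psi(x-1)}{\psi'(x)}\ \ge\ \frac{1-e^{-u(x)}}{u(x)}.
$$
This is the only analytic step, and it transforms the claim into a statement about the single function $F(u):=(1-e^{-u})/u$ evaluated along the curve $u(x)$.

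The function $F$ is manifestly positive on $(0,\infty)$, satisfies $F(0^+)=1$, and is strictly decreasing there, since $F(u)=\frac{1}{u}\int_0^u e^{-t}\,dt$ is the average of a strictly decreasing function. Consequently, a positive uniform lower bound for $F(u(x))$ follows at once from any uniform upper bound on $u(x)$. Here log-concavity is used a second time: by definition $u=(\log\psi)'$ is non-increasing on $(0,\infty)$, so $u(x)\le u(1)=:C_0$ for every $x>1$. Combining the two ingredients gives
$$
\inf_{x>1}\frac{\psi(x)-\psi(x-1)}{\psi'(x)}\ \ge\ F(C_0)\ >\ 0,
$$
which is the desired conclusion.

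The only point that requires a moment's care is the possibility that $u(x)$ could blow up as $x\to 1^+$, which would send $F(u(x))$ to $0$ and kill the argument. This is ruled out by the definition of the class $\D$, which guarantees $\psi$ is differentiable and strictly positive on $(0,\infty)$, hence $C_0=u(1)=\psi'(1)/\psi(1)$ is finite; the monotonicity of $u$ then transports this single finite value into a uniform bound on the whole interval $(1,\infty)$. I do not expect any further obstacle; the remainder is the two elementary manipulations indicated above.
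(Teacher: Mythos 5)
Your proof is correct, but it takes a genuinely different route from the paper's. The paper proves the equivalent statement $\sup_{x>1}\frac{\psi'(x)}{\psi(x)-\psi(x-1)}<\infty$: it takes a bound $M$ for the non-increasing function $\psi'/\psi$, estimates $\log\frac{\psi(x)}{\psi(x-1)}=\int_{x-1}^{x}\frac{\psi'(t)}{\psi(t)}\,dt\le\frac{\psi'(x-1)}{\psi(x-1)}\le M$, and then invokes the mean value theorem (a point $c\in(x-1,x)$ with $\psi'(c)=\psi(x)-\psi(x-1)$) together with the monotonicity of $\psi'/\psi$ to get $\frac{\psi'(x)}{\psi(x)-\psi(x-1)}\le\frac{\psi(x)}{\psi(x-1)}\le e^{M}$. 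You instead use the supporting-line inequality for the concave function $\log\psi$ at the point $x$, which gives $\psi(x-1)\le\psi(x)e^{-u(x)}$ with $u=\psi'/\psi$, and then reduce everything to the elementary decreasing function $F(u)=(1-e^{-u})/u$ combined with $u(x)\le u(1)$ for $x>1$. Both arguments exploit log-concavity twice (once to compare $\psi(x-1)$ with $\psi(x)$, once through the monotonicity of $u$) and both produce explicit constants, $e^{-M}$ in the paper versus $F(u(1))$ in your version. Your route has a small technical advantage near the left endpoint: the paper's step $\frac{\psi'(x-1)}{\psi(x-1)}\le M$ needs $\psi'/\psi$ to be bounded on all of $(0,\infty)$, which a non-increasing function need not be (for $\psi(x)=x$ one has $\psi'/\psi=1/x$, unbounded as $x\to0^{+}$), whereas you only need the single finite value $u(1)$ and monotonicity of $u$ on $[1,\infty)$, so the infimum over all $x>1$ is handled cleanly; in the paper's application (Appendix B, where only integer arguments $k\ge 2$ occur) this distinction is harmless, but as a proof of the lemma exactly as stated your argument is the more watertight one.
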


\begin{proof}
The assertion is equivalent to
	\begin{equation}\label{sup-bounded}
	\sup_{x>1}\frac{\psi'(x)}{\psi(x)-\psi(x-1)}<\infty.
	\end{equation}
Since $\psi'(x)/\psi(x)$ is non-increasing, it is bounded, say by $M>0$. Then
	$$
	\log\frac{\psi(x)}{\psi(x-1)}=\int_{x-1}^x\frac{\psi'(t)}{\psi(t)}\, dt\leq \frac{\psi'(x-1)}{\psi(x-1)}\leq M.
	$$
By the mean value theorem, there exists a constant $c\in (x-1,x)$ such that $\psi'(c)=\psi(x)-\psi(x-1)$. Then
	$$
	\frac{\psi'(x)}{\psi(x)-\psi(x-1)}=\frac{\psi'(x)}{\psi(c)}\cdot\frac{\psi(c)}{\psi'(c)}\leq \frac{\psi'(x)}{\psi(x-1)}\cdot\frac{\psi(x)}{\psi'(x)}=\frac{\psi(x)}{\psi(x-1)}\leq e^M,
	$$
which implies \eqref{sup-bounded}.
\end{proof}

The assertion of Lemma~\ref{log-concave-lemma} is satisfied by the functions $\psi(x)=e^{\alpha x}$, $\alpha>0$, which have the maximal growth-rate in the class of log-concave functions. 

\medskip
(b) We prove, without assuming $\log$-concavity of $\psi$, that if
	$$
	\overline{\operatorname{d}}_{\psi}(A)=\limsup_{n\to\infty}\frac{A_\psi(n)}{\psi(n)}>0,
	$$
then the series in \eqref{convergent-sum0} diverges.
It is clear that $A_\psi(n)$ is non-decreasing, and by the assumption $\overline{\operatorname{d}}_{\psi}(A)>0$, it is also an unbounded function of $n$. By Abel's partial summation, and by the assumption that  $\left\{\frac{c_n}{\psi'(n)}\right\}_n$ is non-decreasing, we obtain
	\begin{eqnarray*}
	\sum_{k=1}^n \chi_A(k)c_k 
	&=& \sum_{k=1}^n\psi'(k)\chi_A(k)\frac{c_k}{\psi'(k)}\\
	&\overset{\text{Abel}}{=}& A_\psi(n)\frac{c_{n}}{\psi'(n)}+\sum_{k=1}^{n-1}A_\psi(k)\left(\frac{c_{k}}{\psi'(k)}-\frac{c_{k+1}}{\psi'(k+1)}\right)\\
	&\geq & A_\psi(n)\frac{c_{n}}{\psi'(n)}-A_\psi(n-1)\sum_{k=1}^{n-1}\left(\frac{c_{k+1}}{\psi'(k+1)}-\frac{c_{k}}{\psi'(k)}\right)\\
	&\geq& A_\psi(n)\frac{c_{n}}{\psi'(n)}-A_\psi(n)\left(\frac{c_n}{\psi'(n)}-\frac{c_1}{\psi'(1)}\right)= A_\psi(n)\frac{c_1}{\psi'(1)},
	\end{eqnarray*}
where the lower bound tends to $\infty$ as $n\to\infty$. Note that $c_1>0$ can be assumed here, for if $c_1=0$, then we can start the summation in the reasoning above from $k=m$, where $c_m>0$.

%%%%%%%%%%%%%%%%%%%%%%%%%%%%%%%%%%%%%%
%%
%% References
%%
%%%%%%%%%%%%%%%%%%%%%%%%%%%%%%%%%%%%%%

\bigskip
E-mail: \texttt{janne.heittokangas@uef.fi}

\textsc{University of Eastern Finland, Department of Physics and Mathematics, P.O.~Box 111, 80100 Joensuu, Finland}

\medskip
E-mail: \texttt{z.latreuch@nhsm.edu.dz}

\textsc{National Higher School of Mathematics, Scientific and Technology Hub of Sidi Abdellah, P.O.~Box 68, Algiers 16093, Algeria}

\end{document}